\newtheorem{theorem}{Theorem}[section]
\newtheorem*{theorem*}{Theorem}
\newtheorem{lemma}[theorem]{Lemma}
\newtheorem{proposition}[theorem]{Proposition}
\newtheorem{definition}[theorem]{Definition}
\theoremstyle{definition}
\newtheorem{example}[theorem]{Example}
\newtheorem{remark}[theorem]{Remark}
\def\m{\mathfrak m}
\def\MPR{{\rm MR}}
\def\mMPR{{\mathfrak{m}}{\rm MR}}
\def\MMPR{{\mathfrak{M}}{\rm MR}}
\def\MNSym{{\mathfrak{M}}{\rm NSym}}
\def\MSym{{\mathfrak{M}}{\rm Sym}}
\def\mQSym{{\mathfrak m}{\rm QSym}}
\def\mSym{{\mathfrak m}{\rm Sym}}
\def\QSym{{\rm QSym}}
\def\NSym{{\rm NSym}}
\def\Sym{{\rm Sym}}
\def \L{{\hat{L}}}
\def\tA{{\hat{\mathcal{A}}}}
\def\K{\hat{K}}
\def\tJ{\tilde{\mathcal{J}}}
\def\tK{\hat{K}}
\DeclareMathAlphabet{\mathpzc}{OT1}{pzc}{m}{it}
\title{
Antipode formulas for some combinatorial Hopf algebras
}
\numberwithin{equation}{section}
\begin{document}

\author{Rebecca Patrias}
\address{\hspace{-.3in} LaCIM, Universit\'e du Qu\'ebec \`a Montr\'eal,
Montr\'eal (Qu\'ebec), Canada}
\email{patriasr@lacim.ca}\thanks{R.P. was supported by NSF grant DMS-1148634.}

\date{\today
}

\thanks{
}

\subjclass{
Primary
05E05, 
}

\keywords{}

\begin{abstract}
Motivated by work of Buch on set-valued tableaux in relation to the $K$-theory of the Grassmannian, Lam and Pylyavskyy studied six 
combinatorial Hopf algebras that can be thought of as $K$-theoretic analogues of the Hopf algebras of symmetric functions, 
quasisymmetric functions, noncommutative symmetric functions, and of the Malvenuto-Reutenauer Hopf algebra of permutations. They described the 
bialgebra structure in all cases that were not yet known but left open the question of finding explicit formulas for the antipode maps. We 
give combinatorial formulas for the antipode map for the $K$-theoretic analogues of the symmetric functions, quasisymmetric functions, and noncommutative symmetric functions.
\end{abstract}

\ \vspace{-.1in}

\maketitle

\section{Introduction}

A Hopf algebra is a structure that is both an associative algebra with unit and a coassociative 
coalgebra with counit. 
The algebra and coalgebra structures are compatible, which makes it a bialgebra. 
To be a Hopf algebra, a bialgebra must have a special anti-endomorphism called the antipode,
which must satisfy certain properties. 

Hopf algebras arise naturally in combinatorics. Notably, the symmetric functions ($\Sym$), 
quasisymmetric functions ($\QSym$), noncommutative symmetric functions
($\NSym$), and the Malvenuto-Reutenauer algebra of permutations ($\MPR$) are Hopf algebras,
 which can be arranged as shown in Figure~\ref{fig:combhopf}.
 
\begin{figure}[h!]
\begin{center}
\begin{tikzpicture}[scale=1.5]
\node (A) at (-1,0) {$\Sym$};
\node (B) at (1,0) {$\Sym$};
\node (C) at (-1,1) {$\NSym$};
\node (D) at (1,1) {$\QSym$};
\node (E) at (-1, 2) {$\MPR$};
\node (F) at (1,2) {$\MPR$};
\path[->>]
(F) edge node[right]{$ $} (D)
(C) edge node[left]{$ $} (A);
\path[right hook->]
(B) edge node[right]{$ $} (D)
(C) edge node[right]{$ $} (E);
\draw (A) -- (B)
(C) -- (D)
(E) -- (F);
\end{tikzpicture}
\end{center}
\caption{Diagram of combinatorial Hopf algebras}
\label{fig:combhopf}
\end{figure}
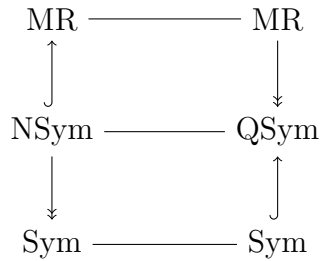

Through the work of Lascoux and Sch\"{u}tzengerger \cite{lascoux1structure}, Fomin and Kirillov \cite{fomin1996yang}, and Buch \cite{buch2002littlewood}, symmetric 
functions known as stable Grothendieck polynomials were discovered and given a combinatorial 
interpretation in terms of set-valued tableaux.
They originated from Grothendieck polynomials, which serve as representatives of $K$-theory classes 
of structure sheaves of Schubert varieties.
The stable Grothendieck polynomials play the role of Schur functions in the $K$-theory of Grassmannians. 
They also determine a $K$-theoretic
analogue of the symmetric functions, which we call the multi-symmetric functions and denote $\mSym$. 

In \cite{lam2007combinatorial}, Lam and Pylyavksyy extend the definition of $P$-partitions to create $P$-set-valued partitions,
 which they use to define a new $K$-theoretic analogue
of the Hopf algebra of quasisymmetric functions called the Hopf algebra of multi-quasisymmetric functions. 
The entire diagram may be extended to give the diagram in Figure~\ref{fig:Kcombhopf}.

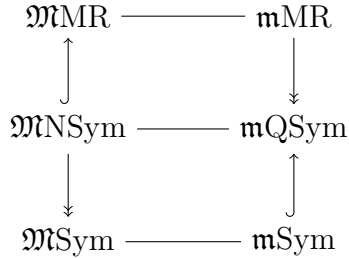
\begin{figure}[h!]
\begin{center}
\begin{tikzpicture}[scale=1.5]
\node (A) at (-1,0) {$\MSym$};
\node (B) at (1,0) {$\mSym$};
\node (C) at (-1,1) {$\MNSym$};
\node (D) at (1,1) {$\mQSym$};
\node (E) at (-1, 2) {$\MMPR$};
\node (F) at (1,2) {$\mMPR$};
\path[->>]
(F) edge node[right]{$ $} (D)
(C) edge node[left]{$ $} (A);
\path[right hook->]
(B) edge node[right]{$ $} (D)
(C) edge node[right]{$ $} (E);
\draw (A) -- (B)
(C) -- (D)
(E) -- (F);
\end{tikzpicture}
\end{center}
\caption{Diagram of $K$-theoretic combinatorial Hopf algebras of Lam and Pylyavskyy}
\label{fig:Kcombhopf}
\end{figure}

Using Takeuchi's formula \cite{takeuchi1971free}, they give a formula for the antipode for $\MMPR$ but leave open the
 question of an antipode for the remaining Hopf algebras.
In this paper, we give the first combinatorial formulas for the antipode maps of $\MNSym$, $\mQSym$, $\mSym$, and $\MSym$. 

\begin{remark}
As there is no sufficiently nice combinatorial formula for the antipode map in the Malvenuto-Reutenauer Hopf algebra of permutations, we do not attempt a formula for its $K$-theoretic analogues: $\MMPR$ and $\mMPR$.
\end{remark}

After a brief introduction to Hopf algebras, we introduce the Hopf algebra $\mQSym$ in Section~\ref{mQSym}. 
Next, we introduce  $\MNSym$ in Section~\ref{MNSym}. We present results concerning the antipode map in $\MNSym$ and $\mQSym$, namely
Theorems~\ref{MNSymAntipode} and \ref{thm:mQSymAntipode}. In Section~\ref{sec:Lhat}, we present an additional 
basis for $\mQSym$,
give analogues of results in \cite{lam2007combinatorial} for this new basis, and give an antipode formula in $\mQSym$ involving the new basis
in Theorem~\ref{LhatAntipode}.
Lastly, we introduce the Hopf algebras of multi-symmetric functions, $\mSym$, and of Multi-symmetric 
functions, $\MSym$ in Sections~\ref{mSym} and \ref{MSym}. We end with Theorems \ref{thm:mSymAntipode}, \ref{thm:MSymAntipode}, and \ref{thm:S(G)}, which describe antipode maps in these spaces.
 
\section{Hopf algebra basics}\label{sec:Hopf}
\subsection{Algebras and coalgebras}

First we build a series of definitions leading to the definition of a Hopf algebra. 
For further reading, we recommend \cite{joni1979coalgebras, montgomery1993hopf, grinberg2014hopf, sweedler1969hopf}.

In this section, $k$ will usually denote a field, although it may also be a commutative ring. 
In all later sections we take $k=\mathbb{Z}$. All tensor products are taken over $k$.

\begin{definition}\label{algebra}
 An associative k-algebra $A$ is a $k$-vector space with associative operation $m:A\otimes A \to A$ (the product)
and unit map $\eta:k \to A$ with $\eta(1_k)=1_A$ such that the following diagrams commute: 

\begin{center}
\begin{minipage}{0.45\textwidth}
\begin{tikzpicture}[scale=1.5]
\node (A) at (0,2) {$A\otimes A \otimes A$};
\node (B) at (2,2) {$A\otimes A$};
\node (C) at (0,0) {$A \otimes A$};
\node (D) at (2,0) {$A$};
\path[->,font=\scriptsize,>=angle 90]
(A) edge node[above]{$1\otimes m$} (B)
(A) edge node[right]{$m \otimes 1$} (C)
(B) edge node[right]{$m$} (D)
(C) edge node[above]{$m$} (D);
\end{tikzpicture}
\end{minipage}
\begin{minipage}{0.45\textwidth}
\begin{tikzpicture}[scale=1.5]
\node (A) at (0,0) {$A\otimes A$};
\node (B) at (-2,2) {$k \otimes A$};
\node (C) at (0,2) {$ A$};
\node (D) at (2,2) {$A\otimes k$};
\path[->,font=\scriptsize,>=angle 90]
(B) edge node[left]{$\eta \otimes 1$} (A)
(A) edge node[right]{$m$} (C)
(D) edge node[right]{$1\otimes \eta$} (A)
(C) edge node[above]{$\cong$} (D)
(D) edge node[above]{$\cong$} (C)
(C) edge node[above]{$\cong$} (B)
(B) edge node[above]{$\cong$} (C);
\end{tikzpicture}
\end{minipage}
\end{center}
where we take the isomorphisms sending $a\otimes k$ to $ak$ and $k\otimes a$ to $ka$.
\end{definition}

The first diagram tells us that $m$ is an associative product and the second that $\eta(1_k)=1_A$.

\begin{definition} A co-associative coalgebra C is a k-vector space with $k$-linear map
 $\Delta:C\to C\otimes C$ (the coproduct) and a counit $\epsilon:C \to k$ such that the following diagrams commute. 
\begin{center}
\begin{minipage}{0.45\textwidth}
\begin{tikzpicture}[scale=1.5]
\node (A) at (0,2) {$C$};
\node (B) at (2,2) {$C\otimes C$};
\node (C) at (0,0) {$C \otimes C$};
\node (D) at (2,0) {$C\otimes C\otimes C$};
\path[->,font=\scriptsize,>=angle 90]
(A) edge node[above]{$\Delta$} (B)
(A) edge node[right]{$\Delta$} (C)
(C) edge node[above]{$1\otimes\Delta$} (D)
(B) edge node[right]{$\Delta\otimes 1$} (D);
\end{tikzpicture}
\end{minipage}
\begin{minipage}{0.45\textwidth}
\begin{tikzpicture}[scale=1.5]
\node (A) at (0,0) {$C\otimes C$};
\node (B) at (-2,2) {$k \otimes C$};
\node (C) at (0,2) {$ C$};
\node (D) at (2,2) {$C\otimes k$};
\path[->,font=\scriptsize,>=angle 90]
(A) edge node[left]{$\epsilon \otimes 1$} (B)
(C) edge node[right]{$\Delta$} (A)
(A) edge node[right]{$1\otimes \epsilon$} (D)
(D) edge node[above]{$\cong$} (C)
(C) edge node[above]{$\cong$} (D)
(B) edge node[above]{$\cong$} (C)
(C) edge node[above]{$\cong$} (B);
\end{tikzpicture}
\end{minipage}
\end{center}
\end{definition}

The diagram on the left indicates that $\Delta$ is \textit{co-associative}. 
Note that these are the same diagrams as in the Definition~\ref{algebra} with all of the arrows reversed. 

It is often useful to think of the product as a way to combine two elements of an algebra and 
to think of the coproduct as a sum over ways to split a coalgebra element into two pieces.
When discussing formulas involving $\Delta$, we will use Sweedler notation as shown below:
$$\Delta(c)=\sum_{(c)}{c_1 \otimes c_2}=\sum{c_1 \otimes c_2}.$$  This is a common convention 
that will greatly simplify our notation.

\begin{example}\label{ex:shuffle1}
To illustrate the concepts just defined, we give the example of the shuffle algebra, 
which is both an algebra and coalgebra. Let $I$ be an alphabet and $\bar{I}$ be the set of words on $I$. We declare that the elements of $\bar{I}$ form a $k$-basis for the shuffle algebra.

Given two words $a=a_1a_2\cdots a_t$ and $b=b_1 b_2 \cdots b_n$ in $\bar{I}$, define their product, $m(a\otimes b)$, to be the \textit{shuffle product} of
$a$ and $b$. That is, $m(a\otimes b)$ is the sum of all $\binom{t+n}{n}$ ways to interlace the 
two words while maintaining the relative order of the letters in each word. 
For example, $$m(a_1a_2 \otimes b_1)=a_1a_2b_1+a_1b_1a_2+b_1a_1a_2.$$ We may then extend by 
linearity. It is not hard to see that this multiplication is associative. 

The unit map for the shuffle algebra is defined by $\eta(1_k)=\emptyset$, where $\emptyset$ is the empty word. 
Note that $m(a\otimes \emptyset)=m(\emptyset \otimes a)=a$ for any word $a$.

For a word $a=a_1a_2\cdots a_t$ in $\bar{I}$, we define 
$$\Delta(a)=\sum_{i=0}^t{a_1a_2\cdots a_i \otimes a_{i+1}a_{i+2}\cdots a_t}$$ and call this the 
\textit{cut coproduct} of $a$. For example, given a word $a=a_1a_2$, 
$$\Delta(a)=\emptyset \otimes a_1a_2 + a_1\otimes a_2 + a_1a_2\otimes\emptyset.$$

The counit map is defined by letting $\epsilon$ take the coefficient of the empty word. 
Hence for any nonempty $a \in \bar{I}$, $\epsilon(a)=0$. 
\end{example}

\subsection{Morphisms and bialgebras}

The next step in defining a Hopf algebra is to define a bialgebra. For this, we need a notion of compatibility 
of maps of an algebra $(m,\eta)$ and maps of a coalgebra $(\Delta,\epsilon)$.
With this as our motivation, we introduce the following definitions.

\begin{definition}
 If $A$ and $B$ are k-algebras with multiplication $m_A$ and $m_B$ and unit maps $\eta_A$ and $\eta_B$, 
respectively, then a k-linear map
$f:A \to B$ is an algebra morphism if $f \circ m_A=m_B\circ (f\otimes f)$ and $f\circ \eta_A =\eta_B$.
\end{definition}

\begin{definition}
 Given k-coalgebras $C$ and $D$ with comultiplication and counit 
$\Delta_C$, $\epsilon_C$, $\Delta_D$, and $\epsilon_d$, 
k-linear map $g: C \to D$ is a coalgebra morphism if 
$\Delta_D \circ g=(g\otimes g)\circ \Delta_C$ and $\epsilon_D\circ g = \epsilon_C$.
\end{definition}

Given two $k$-algebras $A$ and $B$, their tensor product $A\otimes B$ is 
also a $k$-algebra with $m_{A\otimes B}$ defined to be the composite of

\begin{center}
\begin{tikzpicture}[scale=1.5]
\node (A) at (-3,0) {$A\otimes B \otimes A\otimes B$};
\node (B) at (0,0) {$A\otimes A \otimes B \otimes B$};
\node (C) at (2.5,0) {$A \otimes B,$};
\path[->,font=\scriptsize,>=angle 90]
(A) edge node[above]{$1\otimes T \otimes 1$} (B)
(B) edge node[above]{$m_A \otimes m_B$} (C);
\end{tikzpicture}
\end{center}
where $T(b\otimes a)=a\otimes b$. For example, we have $$m_{A\otimes B}((a\otimes b)\otimes(a'\otimes b'))=m_A(a\otimes a')\otimes m_B(b\otimes b').$$
The unit map in $A\otimes B$, $\eta_{A\otimes B}$, is given by the composite 

\begin{center}
\begin{tikzpicture}[scale=1.5]
\node (A) at (-1,0) {$k$};
\node (B) at (0,0) {$k\otimes k$};
\node (C) at (1.5,0) {$A \otimes B.$};
\path[->,font=\scriptsize,>=angle 90]
(A) edge node[above]{$ $} (B)
(B) edge node[above]{$\eta_A \otimes \eta_B$} (C);
\end{tikzpicture}
\end{center}

Similarly, given two coalgebras $C$ and $D$, their tensor product $C\otimes D$ is a coalgebra with 
$\Delta_{C\otimes D}$ the composite of \\

\begin{center}
\begin{tikzpicture}[scale=1.5]
\node (A) at (-2.5,0) {$C\otimes D$};
\node (B) at (0,0) {$C\otimes C \otimes D \otimes D$};
\node (C) at (3,0) {$C \otimes D \otimes C \otimes D,$};
\path[->,font=\scriptsize,>=angle 90]
(A) edge node[above]{$\Delta_C\otimes \Delta_D$} (B)
(B) edge node[above]{$1\otimes T \otimes 1$} (C);
\end{tikzpicture}
\end{center}
and the counit $\epsilon_{A\otimes B}$ is the composite \\

\begin{center}
\begin{tikzpicture}[scale=1.5]
\node (A) at (-1.5,0) {$C\otimes D$};
\node (B) at (0,0) {$k\otimes k$};
\node (C) at (1,0) {$k$.};
\path[->,font=\scriptsize,>=angle 90]
(A) edge node[above]{$\epsilon_C \otimes \epsilon_D$} (B)
(B) edge node[above]{$ $} (C);
\end{tikzpicture}
\end{center}

\begin{definition}
Given A that is both a k-algebra and a k-coalgebra, we call A a k-bialgebra if $(\Delta, \epsilon)$ 
are morphisms for the algebra structure
$(m, \eta)$ or equivalently, if $(m,\eta)$ are morphisms for the coalgebra structure $(\Delta, \epsilon)$.
\end{definition}

\begin{example}\label{ex:shuffle2}
The shuffle algebra is a bialgebra. We can see, for example, that
\begin{eqnarray}
 \Delta \circ m_A (a_1\otimes b_1)&=&\Delta(a_1b_1+b_1a_1)\nonumber\\ 
&=& \emptyset\otimes a_1b_1+a_1\otimes b_1+a_1b_1\otimes\emptyset+\emptyset\otimes b_1a_1+b_1\otimes a_1+b_1a_1\otimes\emptyset\nonumber\\
&=& \emptyset\otimes (a_1b_1+b_1a_1)+b_1\otimes a_1 + a_1\otimes b_1 + (a_1b_1+b_1a_1)\otimes \emptyset\nonumber\\
&=& m_A(\emptyset \otimes \emptyset)\otimes m_A(a_1\otimes b_1)+m_A(\emptyset\otimes b_1)\otimes m_A(a_1\otimes\emptyset)+m_A(a_1\otimes \emptyset)\otimes m_A(\emptyset\otimes b_1)\nonumber\\
& & +m_A(a_1\otimes b_1)\otimes m_A(\emptyset\otimes\emptyset)\nonumber\\
&=& m_{A\otimes A} ((\emptyset\otimes a_1+a_1\otimes\emptyset)\otimes(\emptyset\otimes b_1+b_1\otimes\emptyset))\nonumber\\
&=& m_{A\otimes A} \circ (\Delta(a_1)\otimes \Delta(b_1)).\nonumber
\end{eqnarray}
This is evidence that the coproduct, $\Delta$, is an algebra morphism.
\end{example}

\subsection{The antipode map}

A Hopf algebra is a bialgebra equipped with an additional map called the antipode map. 
On our way to defining the antipode map,
we must first introduce an algebra structure on $k$-linear algebra maps that take coalgebras to algebras.

\begin{definition} 
 Given coalgebra C and algebra A, we form an associative algebra structure on the set of $k$-linear maps from $C$ to $A$, $Hom_k(C,A)$, called the convolution
algebra as follows: for $f$ and $g$ in $Hom_k(C,A)$, define the product, $f * g$, by $$(f * g)(c)
=m \circ (f\otimes g)\circ \Delta(c)=\sum f(c_1)g(c_2),$$ where
$\Delta(c)=\sum c_1 \otimes c_2$.
\end{definition}

Note that $\eta \circ \epsilon$ is the two-sided identity element for $*$ using this product. We can easily see this 
in the shuffle algebra from Examples~\ref{ex:shuffle1} and \ref{ex:shuffle2}
if we remember that $(\eta \circ \epsilon)(a)=\eta(0)=0$ for all words $a \neq \emptyset$. 
Let $c$ be a word in the shuffle algebra, then
$$(f * (\eta \circ \epsilon))(c) = \sum f(c_1)(\eta \circ \epsilon)(c_2)=f(c)=\sum(\eta \circ \epsilon)(c_1)f(c_2)=((\eta \circ \epsilon) * f)(c)$$ 
because $c_1=c$ when $c_2=\emptyset$ and $c_2=c$ when $c_1=\emptyset$.

If we have a bialgebra $A$, then we can consider this convolution structure to be on $End_k(A):= Hom_k(A,A).$

\begin{definition}\label{convolutionalinverse}
 Let $(A,m, \eta, \Delta, \epsilon)$ be a bialgebra. Then $S \in End_k(A)$ is called an antipode for bialgebra A
if $$id_A * S = S * id_A = \eta \circ \epsilon,$$ where $id_A:A \to A$ is the identity map.
\end{definition}

In other words, the endomorphism $S$ is the two-sided inverse for the identity map $id_A$ under the 
convolution product. Equivalently, if $\Delta(a)=\sum{a_1\otimes a_2}$, 
$$(S * id_A)(a)=\sum{S(a_1)a_2}=\eta(\epsilon(a))=\sum{a_1S(a_2})=(id_A * S).$$
Because we have an associative algebra, this means that if an antipode exists, then it is unique.

\begin{example}
We define the antipode of a word in the shuffle algebra by 
\[
 S(a_1a_2\cdots a_t)=(-1)^ta_ta_{t-1}\cdots a_2 a_1
\]
and extend by linearity. We can see an example of the defining property by computing
\begin{eqnarray}
(id * S)(a_1a_2) &=& m(id(\emptyset)\otimes S(a_1a_2))+m(id(a_1)\otimes S(a_2))+m(id(a_1a_2)\otimes S(\emptyset))\nonumber\\
&=& -a_2a_1-m(a_1\otimes a_2)+a_1a_2\nonumber \\
&=& -a_2a_1-(a_1a_2+a_2a_1)+a_1a_2\nonumber\\
&=& 0 \nonumber\\
&=& \eta(\epsilon(a_1a_2)).\nonumber
\end{eqnarray}
\end{example}

We end this section with two useful properties that we use in later sections. The first is a well-known property of the antipode map 
for any Hopf algebra.

\begin{proposition}\label{antiendomorphism}
Let $S$ be the antipode map for Hopf algebra $A$. Then $S$ is an algebra anti-endomorphism: 
$S(1)=1$, and $S(ab)=S(b)S(a)$ for all 
$a$, $b$ in $A$.
\end{proposition}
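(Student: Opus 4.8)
The plan is to prove Proposition~\ref{antiendomorphism} in the standard way, by exhibiting $S(1)$ and the map $a\otimes b\mapsto S(b)S(a)$ as two-sided inverses of appropriate elements in suitable convolution algebras, and then invoking uniqueness of inverses. First I would establish $S(1)=1_A$: since $1_A$ is grouplike, $\Delta(1_A)=1_A\otimes 1_A$ and $\epsilon(1_A)=1_k$, so applying the defining identity $m\circ(S\otimes\mathrm{id})\circ\Delta=\eta\circ\epsilon$ to $1_A$ gives $S(1_A)\cdot 1_A=\eta(\epsilon(1_A))=\eta(1_k)=1_A$, hence $S(1_A)=1_A$.

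For the anti-multiplicativity, I would work in the convolution algebra $\mathrm{Hom}_k(A\otimes A,\,A)$, where $A\otimes A$ carries its tensor-coalgebra structure and $A$ its algebra structure. Consider three maps in this convolution algebra: the multiplication $m\colon A\otimes A\to A$, the map $\nu\colon A\otimes A\to A$ defined by $\nu(a\otimes b)=S(b)S(a)$, and the map $N(a\otimes b)=S(a)S(b)$ (equivalently $N=m\circ(S\otimes S)$). The key step is to show that both $N$ and $\nu$ are convolution inverses of $m$ in $\mathrm{Hom}_k(A\otimes A, A)$; since that convolution algebra is associative with two-sided identity $\eta_A\circ\epsilon_{A\otimes A}$, inverses are unique, so $N=\nu$ as maps on $A\otimes A$, which is exactly $S(ab)=S(b)S(a)$. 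To see $m*N=\eta\circ\epsilon$, I would expand using Sweedler notation: $(m*N)(a\otimes b)=\sum a_1 b_1\, S(a_2) S(b_2)$; here one must be careful about how $\Delta_{A\otimes A}$ interleaves, so the sum is over $\Delta(a)=\sum a_1\otimes a_2$ and $\Delta(b)=\sum b_1\otimes b_2$ with the middle factors swapped. Using that $\Delta$ is an algebra map (the bialgebra axiom), $\sum a_1b_1\otimes a_2 b_2=\Delta(ab)$, and then $\sum(ab)_1\,S((ab)_2)=\eta(\epsilon(ab))$ by the antipode axiom for the element $ab$; unwinding the definition of $\epsilon_{A\otimes A}$ as $\epsilon_A\otimes\epsilon_A$ identifies this with $\eta_A(\epsilon_{A\otimes A}(a\otimes b))$. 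Showing $\nu*m=\eta\circ\epsilon$ is the symmetric computation: $(\nu*m)(a\otimes b)=\sum S(b_1)S(a_1)\,a_2 b_2=\sum S(b_1)\big(\sum S(a_1)a_2\big)b_2=\sum S(b_1)\,\epsilon(a)1_A\,b_2=\epsilon(a)\sum S(b_1)b_2=\epsilon(a)\epsilon(b)1_A$, again using the antipode axiom twice and then collapsing.

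The main obstacle, such as it is, is purely bookkeeping: getting the coalgebra structure on $A\otimes A$ exactly right so that the interleaving of the Sweedler components matches the $1\otimes T\otimes 1$ twist in the definition of $\Delta_{C\otimes D}$ given earlier, and correctly identifying $\eta_A\circ\epsilon_{A\otimes A}$ as the convolution identity on $\mathrm{Hom}_k(A\otimes A,A)$ rather than accidentally using the identity of $\mathrm{End}_k(A)$. There is no deep idea required beyond the associativity of convolution and uniqueness of two-sided inverses; the only care needed is that the ``inverse of $m$'' lives in the right convolution algebra. Since this proposition is standard and is used only as a black box in later sections, I would keep the write-up brief, doing the $m*N$ and $\nu*m$ computations explicitly in Sweedler notation and citing one of \cite{montgomery1993hopf, grinberg2014hopf, sweedler1969hopf} for readers wanting the fully diagrammatic version.
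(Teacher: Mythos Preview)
The paper does not actually prove this proposition: it is stated as a ``well-known property'' and left without proof, so there is nothing to compare your approach against. Your overall strategy---working in the convolution algebra $\mathrm{Hom}_k(A\otimes A,A)$ and using uniqueness of inverses---is indeed the standard one. However, your write-up contains a genuine error that would make the argument fail as written.

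The problem is your choice of the map $N$. You set $N(a\otimes b)=S(a)S(b)$ and then claim that showing $N$ and $\nu$ are both convolution inverses of $m$ yields $N=\nu$, ``which is exactly $S(ab)=S(b)S(a)$.'' But $N=\nu$ only says $S(a)S(b)=S(b)S(a)$, i.e.\ that the image of $S$ lies in the center, which is neither what you want nor true in general. Moreover, your computation of $m*N$ is circular: you write $(m*N)(a\otimes b)=\sum a_1b_1\,S(a_2)S(b_2)$ and then identify this with $\sum (ab)_1\,S((ab)_2)$, but that identification uses $S(a_2)S(b_2)=S(a_2b_2)$, which presupposes multiplicativity of $S$.

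The fix is to replace $N$ by $S\circ m$, so that $(S\circ m)(a\otimes b)=S(ab)$. Then one shows (i) $m*(S\circ m)=\eta\epsilon$ via $\sum a_1b_1\,S(a_2b_2)=\sum (ab)_1 S((ab)_2)=\epsilon(ab)1$, using the bialgebra axiom $\Delta(ab)=\sum a_1b_1\otimes a_2b_2$ and the antipode axiom for $ab$; and (ii) $\nu*m=\eta\epsilon$ exactly as you computed. From (i) $S\circ m$ is a right inverse of $m$ and from (ii) $\nu$ is a left inverse, so $\nu=S\circ m$, which is the desired $S(ab)=S(b)S(a)$. Your $S(1)=1$ paragraph and your $\nu*m$ computation are fine; only the role of $N$ needs to be corrected.
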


The second property allows us to translate antipode formulas between certain Hopf algebras.

\begin{lemma}\label{lem:pairingantipode}
Suppose we have two bialgebra bases, $\{A_\lambda\}$ and $\{B_\mu\}$, that are 
dual under a pairing and such that the structure constants for the 
product of the first basis are the structure constants for the coproduct of the second basis and vice versa. 
In other words, $\langle A_\lambda, B_\mu \rangle = \delta_{\lambda,\mu};$ $A_\lambda A_\mu = \displaystyle\sum_\nu f^\nu_{\lambda,\mu} A_\nu$ and 
$\Delta(B_\lambda)=\displaystyle\sum_{\mu,\nu}f^\lambda_{\mu,\nu} B_\mu\otimes B_\nu$; and  
$\Delta(A_\lambda)=\displaystyle\sum_{\mu,\nu}h^\lambda_{\mu,\nu} A_\mu\otimes A_\nu$ and $B_\lambda B_\mu = 
\displaystyle\sum_\nu h^\nu_{\lambda,\mu} B_\nu$. 
If $$S(A_\lambda)=\displaystyle\sum_{\mu} e_{\lambda,\mu} A_\mu$$ for $S$ satisfying $0=\displaystyle\sum_{}h_{\mu,\nu}^\lambda S(A_\mu)A_\nu$,
then $$S(B_\mu)=\displaystyle\sum_{\lambda} e_{\lambda,\mu} B_\lambda$$ satisfies $\displaystyle\sum_{}f^\lambda_{\mu,\nu}S(B_\mu)B_\nu=0$.
\end{lemma}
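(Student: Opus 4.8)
The plan is to verify the claimed identity $\sum_{\mu,\nu} f^\lambda_{\mu,\nu}\, S(B_\mu) B_\nu = 0$ directly from the hypothesis on $S$ acting on the $\{A_\lambda\}$ basis, simply by rewriting everything in terms of structure constants and matching coefficients. The key observation is that the defining antipode relation $\mathrm{id}*S = \eta\circ\epsilon$, specialized away from the unit, is entirely equivalent to the stated vanishing condition once we feed in which structure constants control the coproduct of $B$ and the product of $B$. Since the coproduct of $B_\lambda$ has the \emph{same} constants $f^\lambda_{\mu,\nu}$ as the product of $A$, and the product of $B_\lambda B_\mu$ has the same constants $h^\nu_{\lambda,\mu}$ as the coproduct of $A$, the computation on the $B$ side is ``dual'' to the computation on the $A$ side, and the coefficient array $e_{\lambda,\mu}$ transposes accordingly.

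First I would substitute $S(B_\mu) = \sum_\lambda e_{\lambda,\mu} B_\lambda$ into the left-hand side, obtaining
\[
\sum_{\mu,\nu} f^\lambda_{\mu,\nu}\, S(B_\mu) B_\nu
= \sum_{\mu,\nu,\kappa} f^\lambda_{\mu,\nu}\, e_{\kappa,\mu}\, B_\kappa B_\nu
= \sum_{\mu,\nu,\kappa,\rho} f^\lambda_{\mu,\nu}\, e_{\kappa,\mu}\, h^\rho_{\kappa,\nu}\, B_\rho,
\]
so the identity to be proven is equivalent to the scalar statement that for every $\lambda$ and every $\rho$,
\[
\sum_{\mu,\nu,\kappa} f^\lambda_{\mu,\nu}\, e_{\kappa,\mu}\, h^\rho_{\kappa,\nu} = 0.
\]
Next I would run the same expansion on the $A$ side: the hypothesis $0 = \sum_{\mu,\nu} h^\lambda_{\mu,\nu}\, S(A_\mu) A_\nu$, after substituting $S(A_\mu) = \sum_\kappa e_{\mu,\kappa} A_\kappa$ and using $A_\kappa A_\nu = \sum_\rho f^\rho_{\kappa,\nu} A_\rho$, becomes, upon extracting the coefficient of each $A_\rho$,
\[
\sum_{\mu,\nu,\kappa} h^\lambda_{\mu,\nu}\, e_{\mu,\kappa}\, f^\rho_{\kappa,\nu} = 0
\quad\text{for all }\lambda,\rho.
\]
The two scalar identities are then matched by relabeling indices and renaming which index plays the role of $\lambda$ versus $\rho$: the roles of $f$ and $h$ are swapped and the two subscripts of $e$ are transposed, which is exactly the transposition built into passing from $S(A_\lambda)=\sum_\mu e_{\lambda,\mu} A_\mu$ to $S(B_\mu)=\sum_\lambda e_{\lambda,\mu} B_\lambda$. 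A careful bookkeeping of which summation index is free shows the second identity is literally the first with $\lambda\leftrightarrow\rho$, so one implies the other.

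The main obstacle I anticipate is purely notational: keeping straight the four index positions on $e$, $f$, $h$ and making sure the free indices line up so that ``the coefficient of $B_\rho$ vanishes for all $\rho$'' really is the same system of equations as ``the coefficient of $A_\rho$ vanishes for all $\rho$.'' There is a mild subtlety in that the hypothesis as stated is the single-sided condition $\sum h^\lambda_{\mu,\nu} S(A_\mu)A_\nu = 0$ (the part of $\mathrm{id}*S=\eta\circ\epsilon$ away from scalars), and one should check that this one-sided convolution identity on the $A$ side transports to the one-sided identity $\sum f^\lambda_{\mu,\nu} S(B_\mu)B_\nu = 0$ on the $B$ side without needing the other side; but since the argument is entirely a transpose of indices, this goes through verbatim. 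No appeal to the pairing $\langle A_\lambda, B_\mu\rangle = \delta_{\lambda,\mu}$ beyond motivating why these are the correct dual structure constants is actually needed in the computation itself.
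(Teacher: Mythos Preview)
Your proposal is correct and is essentially the same argument as the paper's. The paper carries out the identical index manipulation but packages it via the pairing: it computes $\langle \sum f^\lambda_{\mu,\nu} S(B_\mu)B_\nu, A_\tau\rangle$, expands, and rewrites the resulting scalar as $\langle B_\lambda, \sum h^\tau_{\gamma,\nu} S(A_\gamma)A_\nu\rangle$, which vanishes by hypothesis. Since pairing with $A_\tau$ is exactly extracting the coefficient of $B_\tau$, this is the same coefficient comparison you perform, and the paper's ``re-pairing'' step is precisely your relabeling $\lambda\leftrightarrow\rho$, $\mu\leftrightarrow\kappa$.
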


\begin{proof} Indeed, 
\begin{eqnarray}
 \left\langle \displaystyle\sum_{\mu,\nu}f^\lambda_{\mu,\nu}S(B_\mu)B_\nu, A_\tau \right\rangle &=&  
 \left\langle \displaystyle\sum_{\mu,\nu,\gamma}f^\lambda_{\mu,\nu,\gamma}k_{\gamma,\mu} B_\gamma B_\nu, A_\tau \right\rangle\nonumber \\
 &=& \left\langle \displaystyle\sum_{\mu,\nu,\gamma, \rho}f^\lambda_{\mu,\nu,\gamma}k_{\gamma,\mu} h^\rho_{\gamma,\nu} B_\rho, A_\tau \right\rangle\nonumber \\
 &=& \displaystyle\sum_{\mu,\nu,\gamma}f^\lambda_{\mu,\nu,\gamma}k_{\gamma,\mu} h^\tau_{\gamma,\nu} \nonumber\\
 &=& \left\langle B_\lambda, \displaystyle\sum_{\rho, \mu, \nu, \gamma}h_{\gamma,\nu}^\tau k_{\gamma, \mu} f_{\mu,\nu}^\rho A_\rho \right \rangle \nonumber \\
  &=& \left\langle B_\lambda, \displaystyle\sum_{\mu, \nu, \gamma}h_{\gamma,\nu}^\tau k_{\gamma, \mu} A_\mu A_\nu \right \rangle \nonumber \\
    &=& \left\langle B_\lambda, \displaystyle\sum_{\nu, \gamma}h_{\gamma,\nu}^\tau S(A_\gamma) A_\nu \right \rangle \nonumber \\
    &=& 0 \nonumber
\end{eqnarray}
by assumption. 
\end{proof}

\section{The Hopf algebra of multi-quasisymmetric functions}\label{mQSym}

In what follows, we say that a set $\{A_\lambda\}$ \textit{continuously spans} space $A$ 
if everything in $A$ can be written 
as a (possibly infinite) linear combination of $A_\lambda$'s. Here, we assume that $\{A_\lambda\}$ comes with a natural filtration and that each
filtered component is finite. Then we may talk about continuous
span with respect to the topology induced by the filtration. A \textit{continuous basis} for $A$ allows elements to be written as 
arbitrary linear combinations of the basis elements. We say that a linear function $f:A \to A$ is \textit{continuous} if it respects 
arbitrary linear combinations of elements in $A$.

We next introduce the multi-quasisymmetric functions, $\mQSym$. It may be useful for the reader to be familiar with the Hopf algebra of quasisymmetric functions, specifically the basis of fundamental quasisymmetric functions. We recommend \cite{stanley1999enumerative} for background reading. 

\subsection{$(P,\theta)$-set-valued partitions}\label{sec:PthetaPartitions}
Following \cite{lam2007combinatorial}, we define $\mQSym$, the \textit{Hopf algebra of multi-quasisymmetric functions}, by defining the continuous basis of multi-fundamental quasisymmetric functions, ${\tilde{L}_\alpha}$. Let $[n]=\{1,2,\ldots,n\}$. 
We start with a finite poset $P$ with $n$ elements and a bijective labeling $\theta:P \to [n]$. Let $\mathbb{\tilde P}$ denote the set of nonempty, finite subsets
of the positive integers. If $a\in\mathbb{\tilde P}$ and $b \in \mathbb{\tilde P}$ are two such subsets, we say that $a < b$ if $max(a)<min(b)$. Similarly, 
$a \leq b$ if $max(a) \leq min(b)$.

We next define the $(P,\theta)$-set-valued partitions. The definition is almost identical to that of the more well-known $(P,\theta)$-partitions except that we will assign a nonempty, finite subset of positive integers to each element of the poset instead of assigning a single positive integer. We recommend \cite{stanley1999enumerative} for further reading on $(P,\theta)$-partitions.

\begin{definition}
 Let $(P,\theta)$ be a poset with a bijective labeling. A $(P,\theta)$-set-valued partition is a map $\sigma:P\to \mathbb{\tilde P}$ such that for each covering relation
$s\lessdot t$ in $P$,
\begin{enumerate}
\item $\sigma(s) \leq \sigma(t)$ if $\theta(s) < \theta(t)$,
\item $\sigma(s) < \sigma(t)$ if $\theta(s) > \theta(t)$.
\end{enumerate}
\end{definition}

\begin{example}
 The diagram on the left shows an example of a poset $P$ with a bijective labeling $\theta$. We identify elements of $P$ with their labeling. The diagram on the right shows a valid $(P,\theta)$-set-valued partition $\sigma$.
Note that since $3<2$ in the poset, we must have the strict inequality $max(\sigma(3))=6< min(\sigma(2))=30$.\\
\begin{minipage}{0.5\textwidth}
\begin{center}
\begin{tikzpicture}
  \node (a) at (2,2) {$2$};
  \node (b) at (-2,0) {$4$};
  \node (c) at (2,0) {$3$};
  \node (d) at (0,-2) {$1$};
  \draw (d) -- (b)
(d) -- (c) -- (a);
\end{tikzpicture}
\end{center}
\end{minipage}
\begin{minipage}{0.5\textwidth}
\begin{center}
\begin{tikzpicture}
  \node (a) at (2,2) {$\sigma(2)=\{30, 31, 32\}$};
  \node (b) at (-2,0) {$\sigma(4)=\{7,100\}$};
  \node (c) at (2,0) {$\sigma(3)=\{6\}$};
  \node (d) at (0,-2) {$\sigma(1)=\{1,3,6\}$};
  \draw (d) -- (b)
(d) -- (c) -- (a);
\end{tikzpicture}
\end{center}
\end{minipage}
\end{example}

We denote the set of all $(P,\theta)$-set-valued partitions for given poset $P$ by $\tilde{\mathcal{A}}(P,\theta)$. For each element $i$ in $P$, let 
$\sigma^{-1}(i)=\{x \in P \mid i\in\sigma(x)\}$. Now define $\tilde K _{P,\theta}\in\mathbb{Z}[[x_1,x_2,\ldots ]]$ by $$\tilde K_{P,\theta}=\displaystyle
\sum_{\sigma\in\tilde{\mathcal{A}}(P,\theta)}{x_1^{\#\sigma^{-1}(1)}x_2^{\#\sigma^{-1}(2)}\cdots}.$$ 
For example, the $(P,\theta)$-set-valued partition in the previous example contributes $$x_1x_3x_6^2x_7x_{30}x_{31}x_{32}x_{100}$$ to $\tilde K_{P,\theta}$.
Note that $\tilde K _{P,\theta}$ will be of unbounded degree for any nonempty poset $P$. 

We may also consider a Young diagram $\lambda$ as a poset in the natural way as follows. Let $P$ be the poset of squares in the Young diagram of a partition $\lambda = (\lambda_1,\lambda_2,\ldots,\lambda_t)$ and $\theta_s$ be the bijective labeling of $P$
obtained from labeling $P$ in row reading order, i.e. from left to right the bottom row of $\lambda$ is labeled $1,2,\ldots,\lambda_t$, the next row up is labeled 
$\lambda_t+1,\ldots, \lambda_t+\lambda_{t-1}$ and so on. We may thus refer to the function $\tilde{K}_{\lambda,\theta_s}$. We will see this idea next in Example~\ref{ex:Lhatbijection}.

\subsection{The multi-fundamental quasisymmetric functions}\label{sec:multifunquasis}
A composition of $n$ is an ordered arrangement of positive integers that sum to $n$. 
For example, $(3)$, $(1,2)$, $(2,1)$, and $(1,1,1)$ are all of the compositions of $3$.

If $S=\{s_1,s_2,\ldots ,s_k\}$ is a subset of $[n-1]$, we associate a composition, $\mathcal{C}(S)$, 
to $S$ by $\mathcal{C}(S)=\{s_1, s_2-s_1,s_3-s_2,\ldots ,n-s_k\}.$ To composition $\alpha$ of $n$,
we associate $S_\alpha \subset [n-1]$ by letting $S_\alpha=\{\alpha_1, \alpha_1+\alpha_2, \ldots , 
\alpha_1+\alpha_2+\ldots +\alpha_{k-1}\}.$ We may extend this correspondence
to permutations by letting $\mathcal{C}(w)=\mathcal{C}(Des(w))$, where $w \in \mathfrak{S}_n$ and $Des(w)$ 
is the descent set of $w$. For example,
if $S=\{1,4,5\}\subset[6-1]$, $\mathcal{C}(S)=(1, 4-1, 5-4, 6-5)=(1,3,1,1)$. Conversely, 
given composition $\alpha=(1,3,1,1)$, $S_\alpha=\{1, 1+3,1+3+1\}=\{1,4,5\}$. For $w=132\in\mathfrak{S}_3$, 
$Des(w)=\{2\}$ and $\mathcal{C}(w)=(2,1)$. Given a composition $\alpha$ of $n$, we write $w_\alpha$ to denote  any permutation in $\mathfrak{S}_n$ with $\mathcal{C}(w_\alpha)=\alpha$.

We may now define the multi-fundamental quasisymmetric function $\tilde L_\alpha$ indexed by composition $\alpha$.

\begin{definition}
Let $P$ be a finite chain $p_1<p_2<\ldots<p_k$, $w\in \mathfrak{S}_k$ a permutation, and $\mathcal{C}(w)=\alpha$ the composition
of $n$ associated to the descent set of $w$. We label $P$ using $w$ with $\theta(p_i)=w_i$. Then $$\tilde L_\alpha=\tilde{K}_{(P,w)}=\displaystyle
\sum_{\sigma\in \tilde{\mathcal{A}}(P,w)}{x_1^{\#\sigma^{-1}(1)}x_2^{\#\sigma^{-1}(2)}\cdots}.$$ 
\end{definition}

It is easy to see that $\tilde K_{(P,w)}$ depends only on $\alpha$. Note that this is an infinite sum of unbounded degree. The sum of the lowest degree terms in $\tilde L_\alpha$ gives $L_\alpha$, the 
fundamental quasisymmetric function in $\QSym$.
 
\begin{example}
 Let $\alpha=(2,1)$ and $w_\alpha = 231$. We consider all $(P,w_\alpha)$-set-valued partitions on the chain shown below 
 on the far left. The seven images to its right show examples of images of the map $\sigma$.
 
\begin{center}
\begin{minipage}{.08\textwidth}
\begin{tikzpicture}
  \node (a) at (0,0) {$2$};
  \node (b) at (0,2) {$3$};
  \node (c) at (0,4) {$1$};
  \draw (a) -- (b) -- (c);
\end{tikzpicture}
\end{minipage}
\begin{minipage}{.08\textwidth}
\begin{tikzpicture}
  \node (a) at (0,0) {$\{1\}$};
  \node (b) at (0,2) {$\{1\}$};
  \node (c) at (0,4) {$\{2\}$};
  \draw (a) -- (b) -- (c);
\end{tikzpicture}
\end{minipage}
\begin{minipage}{.08\textwidth}
\begin{tikzpicture}
  \node (a) at (0,0) {$\{1\}$};
  \node (b) at (0,2) {$\{1\}$};
  \node (c) at (0,4) {$\{3\}$};
  \draw (a) -- (b) -- (c);
\end{tikzpicture}
\end{minipage}
\begin{minipage}{.08\textwidth}
\begin{tikzpicture}
  \node (a) at (0,0) {$\{1\}$};
  \node (b) at (0,2) {$\{2\}$};
  \node (c) at (0,4) {$\{3\}$};
  \draw (a) -- (b) -- (c);
\end{tikzpicture}
\end{minipage}
\begin{minipage}{.08\textwidth}
\begin{tikzpicture}
  \node (a) at (0,0) {$\{1,2\}$};
  \node (b) at (0,2) {$\{2\}$};
  \node (c) at (0,4) {$\{3,4\}$};
  \draw (a) -- (b) -- (c);
\end{tikzpicture}
\end{minipage}
\begin{minipage}{.08\textwidth}
\begin{tikzpicture}
  \node (a) at (0,0) {$\{1,2\}$};
  \node (b) at (0,2) {$\{2,3\}$};
  \node (c) at (0,4) {$\{4\}$};
  \draw (a) -- (b) -- (c);
\end{tikzpicture}
\end{minipage}
\begin{minipage}{.08\textwidth}
\begin{tikzpicture}
  \node (a) at (0,0) {$\{5,6,7\}$};
  \node (b) at (0,2) {$\{7,100\}$};
  \node (c) at (0,4) {$\{101\}$};
  \draw (a) -- (b) -- (c);
\end{tikzpicture}
\end{minipage}
\begin{minipage}{.08\textwidth}
\begin{tikzpicture}
  \node (a) at (0,0) {$\{5,6,7\}$};
  \node (b) at (0,2) {$\{7\}$};
  \node (c) at (0,4) {$\{100,101\}$};
  \draw (a) -- (b) -- (c);
\end{tikzpicture}
\end{minipage}
\end{center} 
Using the examples above, we see that 
\[
\tilde L_{(2,1)}=x_1^2x_2+x_1^2x_3+x_1x_2x_3+2x_1x_2^2x_3x_4+2x_5x_6x_7^2x_{100}x_{101}+\ldots,
\]
an infinite sum of unbounded degree.
\end{example}

For the following definition, we order $\tilde{\mathbb{P}}$ as in Section~\ref{sec:PthetaPartitions}. Namely, for subsets $a$ and $b$, we say $a \leq b$ if $max(a)\leq min(b)$ and $a<b$ if $max(a)< min(b)$, though we shall only need the latter notion.

\begin{definition}
Given a poset $P$ with $n$ elements, a \textit{linear multi-extension} of $P$ by $[N]$ is a map $e: P \to 2^{[N]}$ for some $N\geq n$ such that 
\begin{enumerate}
\item $e(x)<e(y)$ if $x<y$ in $P$,
\item each $i\in [N]$ is in $e(x)$ for exactly one $x\in P$, and
\item no set $e(x)$ contains both $i$ and $i+1$ for any $i$.
\end{enumerate}
\end{definition}

For $e$ any linear multi-extension of $P$ by $[N]$ and any $i\in[N]$, let $e^{-1}(i)$ denote the unique element $p$ of $P$ such that $i\in e(p)$. Note that $e^{-1}(\{i\})$ may be empty while $e^{-1}(i)$ always contains exactly one element of $P$.
We then define the \textit{multi-Jordan-Holder set} $\tilde{\mathcal{J}}(P,\theta)=\cup_N\tilde{\mathcal{J}_N}(P,\theta)$ to be the union of the sets  
$$ \tilde{\mathcal{J}}_N (P,\theta)=\{ \theta(e^{-1}(1))\theta(e^{-1}(2))\cdots \theta(e^{-1}(N))\mid e\text{ is a linear multi-extension of $P$ by $[N]$}\}.$$
Note that elements in $ \tilde{\mathcal{J}}_N (P,\theta)$ are $\mathfrak{m}$-permutations---pronounced ``multi-permutations''--- \cite{lam2007combinatorial} of $[n]$ with $N$ letters, where we define an $\mathfrak{m}$-permutation
of $[n]$ to be a word in the alphabet $1,2,\ldots,n$ such that no two consecutive letters are equal.

\begin{example}
Consider again the labeled poset below.

\begin{center}
\begin{tikzpicture}
  \node (a) at (2,2) {$2$};
  \node (b) at (-2,0) {$4$};
  \node (c) at (2,0) {$3$};
  \node (d) at (0,-2) {$1$};
  \draw (d) -- (b)
(d) -- (c) -- (a);
\end{tikzpicture}
\end{center}

We can define a linear multi-extension of $P$ by $[7]$ by $e(1)=\{1\}$, $e(3)=\{3, 5\} $
$e(4)=\{2, 4, 6\}$, and $e(2)=\{7\} $. Then, for example, $e^{-1}(5)$ is the element of $P$ labeled by 3. This linear multi-extension contributes the $\mathfrak{m}$-permutation $1434342$ to $\tilde{\mathcal{J}}_7(P,\theta)$.
\end{example}

The following result is proven in \cite{lam2007combinatorial} by giving an explicit weight-preserving bijection between $\tilde{\mathcal{A}}(P,\theta)$ and the set of pairs
$(w, \sigma ')$ where $w \in \tilde{\mathcal{J}}_N(P,\theta)$ and $\sigma ' \in \tilde{\mathcal{A}}(C,w)$, where $C=(c_1<c_2<\ldots <c_r)$ is a chain with $r$ elements.
One can easily recover this bijection from the bijection given in the proof of Theorem~\ref{expandKhat} by restricting to $\tilde{\mathcal{A}}(P,\theta)$.

\begin{theorem}\cite[Theorem 5.6]{lam2007combinatorial}
\label{multiexpandK}
 We can write $$\tilde K_{(P,\theta)}=\displaystyle\sum_{N\geq n}\sum_{w\in \tilde{\mathcal{J}}_N(P,\theta)}\tilde L_{\mathcal{C}(w)}.$$
\end{theorem}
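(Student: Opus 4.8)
\emph{Plan.} I would prove the identity by exhibiting an explicit weight-preserving bijection
$$\Phi\colon\ \tilde{\mathcal A}(P,\theta)\ \xrightarrow{\ \sim\ }\ \bigsqcup_{N\ge n}\ \bigsqcup_{w\in\tilde{\mathcal J}_N(P,\theta)}\tilde{\mathcal A}(C_w,w),$$
where, for an $\mathfrak{m}$-permutation $w$ of $[n]$ with $N$ letters, $C_w=(c_1<\cdots<c_N)$ is the chain on $N$ elements carrying the (non-bijective, but descent-determining) labeling $c_j\mapsto w_j$; this is legitimate because $w$ has no two equal consecutive letters, and because $\tilde K_{(C,\cdot)}$ depends only on the descent composition we have $\sum_{\sigma'\in\tilde{\mathcal A}(C_w,w)}\prod_j\prod_{i\in\sigma'(c_j)}x_i=\tilde L_{\mathcal C(w)}$. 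Granting such a $\Phi$ sending $\sigma$ to a pair $(w,\sigma')$ with $\prod_x\prod_{i\in\sigma(x)}x_i=\prod_j\prod_{i\in\sigma'(c_j)}x_i$, the theorem follows by summing the monomials contributed to each side.

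To build $\Phi$, given $\sigma\in\tilde{\mathcal A}(P,\theta)$ I would scan the integers $v=1,2,3,\dots$ in increasing order and, at each $v$, record the elements $x\in P$ with $v\in\sigma(x)$ listed in increasing order of $\theta(x)$; this produces a finite word $W$ in the elements of $P$. I then merge any maximal run in which a single element $x$ is recorded at consecutive integers $v,v+1,\dots,v+\ell$ with nothing else recorded strictly between those entries of $W$ into one occurrence of $x$; let $w$ be obtained from the merged word by replacing each letter $x$ with $\theta(x)$, of some length $N\ge n$, and let $\sigma'\in\tilde{\mathcal A}(C_w,w)$ assign to $c_j$ the set of integers recorded at the $j$-th (possibly merged) occurrence. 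By construction $\sigma(x)=\bigsqcup_{j:\,w_j=\theta(x)}\sigma'(c_j)$, so $\Phi$ is weight-preserving.

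The work is in the verification. First, one checks that the induced assignment $e\colon x\mapsto\{j:w_j=\theta(x)\}$ is a linear multi-extension of $P$ by $[N]$: conditions (2) and (3) of that notion are forced by the merging rule, while condition (1), $e(x)<e(y)$ whenever $x<y$, is exactly where the defining inequalities of a $(P,\theta)$-set-valued partition enter — along a cover $s\lessdot t$, the case $\theta(s)<\theta(t)$ permits at most the shared boundary value (and reads as an ascent in $w$, i.e. a weak inequality for $\sigma'$), while $\theta(s)>\theta(t)$ forces full separation (a descent in $w$, i.e. a strict inequality for $\sigma'$). Second, one checks directly that $\sigma'$ is a genuine $(C_w,w)$-set-valued partition, i.e. that each ascent/descent of $w$ matches the weak/strict comparison of the recorded integer-sets. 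For the inverse, given $N$, $w\in\tilde{\mathcal J}_N(P,\theta)$ with underlying $e$, and $\sigma'\in\tilde{\mathcal A}(C_w,w)$, I would set $\sigma(x)=\bigcup_{j\in e(x)}\sigma'(c_j)$; because two occurrences of the same letter in an $\mathfrak{m}$-permutation are separated by a descent, the sets $\{\sigma'(c_j)\}_{j\in e(x)}$ are pairwise strictly separated, so this is a disjoint union occurring in increasing blocks, and one verifies $\sigma\in\tilde{\mathcal A}(P,\theta)$ and that this is two-sided inverse to $\Phi$.

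I expect the main obstacle to be getting the merging rule exactly right, so that $\Phi$ is simultaneously well-defined, valued in legitimate pairs $(w,\sigma')$, and bijective. The delicate interaction is between condition (3) of a linear multi-extension (no $e(x)$ contains both $i$ and $i+1$), the $\mathfrak{m}$-permutation condition (no two equal consecutive letters of $w$), and the strict-versus-weak cover conditions on $\sigma$ — concretely, deciding when consecutive occurrences of a single poset element must be fused and when they must be kept apart. Alternatively, and this is the route the paper takes, one can avoid redoing this case analysis by obtaining Theorem~\ref{multiexpandK} as the restriction to $\tilde{\mathcal A}(P,\theta)$ of the more general bijection proving Theorem~\ref{expandKhat} for the $\hat K$/$\hat L$ basis.
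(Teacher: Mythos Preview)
Your approach is essentially the paper's: the paper does not give an independent proof but points to the weight-preserving bijection of Lam--Pylyavskyy and observes (as you also do) that it is the restriction to $\tilde{\mathcal A}(P,\theta)$ of the bijection proving Theorem~\ref{expandKhat}. Your word $W$, built by scanning integers $v$ and listing $\{x:v\in\sigma(x)\}$ in increasing $\theta$-order, is exactly the word $w_\sigma=w_\sigma^{(1)}w_\sigma^{(2)}\cdots$ of that proof, and your inverse $\sigma(x)=\bigcup_{j\in e(x)}\sigma'(c_j)$ matches verbatim.

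One concrete fix is needed in the merging rule. As written, you require the underlying integers $v,v+1,\dots,v+\ell$ to be \emph{consecutive}; this extra condition should be dropped. The correct rule is simply to collapse every maximal run of equal consecutive letters in $W$ --- equivalently, take $w$ to be the unique $\mathfrak m$-permutation of which $W$ is a multiword, as in the proof of Theorem~\ref{expandKhat}. A minimal counterexample to your version: with $P=\{x\}$, $\theta(x)=1$, $\sigma(x)=\{1,3\}$, one gets $W=11$ with recorded integers $1$ and $3$; these two occurrences must be merged (yielding $w=1$ and $\sigma'(c_1)=\{1,3\}$) even though $1,3$ are not consecutive, since otherwise $w=11$ is not an $\mathfrak m$-permutation. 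You correctly flagged the merging step as the delicate one; the resolution makes it simpler, not harder.
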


We now describe how to express $\tilde L_\alpha$ as an infinite linear combination of fundamental quasisymmetric functions, $\{L_\alpha\}$. Let $L_\alpha^{(i)}$ denote the 
homogeneous component of $\tilde L_\alpha$ of degree $|\alpha|+i$. 

Given $D\subset [n-1]$ and $E\subset [n+i-1]$, an injective, order-preserving map $t:[n-1]\to [n+i-1]$ is an \textit{i-extension} of 
$D$ to $E$ if $t(D)\subset E$ and $(E \backslash t(D))=([n+i-1]) \backslash t([n-1])$. In other words, $E$ is the union of the image of $D$ and
the elements not in the image of $t$. Thus $|E|=|D|+i$. Let $T(D,E)$
denote the set of $i$-extensions from $D$ to $E$. For example, if $D=\{1,2\}\subset[2]$ and $E=\{1,2,3\}\subset[3]$, then $|T(D,E)|=3$. On the other hand,
if we have $D'=\{1,2\}\subset [3]$ and $E'=\{1,3,4\}\subset[4]$, then $|T(D',E')|=0$. The proof of the following theorem is similar to that of Theorem \ref{expandLh}.

\begin{theorem}\cite[Theorem 5.12]{lam2007combinatorial} Let $\alpha$ be a composition of $n$ and $D=Des(\alpha)$ be the corresponding descent set. Then for each $i\geq0$, we have
 $$L_\alpha^{(i)} = \displaystyle\sum_{E\subset[n+i-1]}|T(D,E)|L_{\mathcal{C}(E)}.$$
\end{theorem}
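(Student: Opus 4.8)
The plan is to expand $\tilde L_\alpha$ directly from its definition as a sum over $(P,w_\alpha)$-set-valued partitions of a chain, grouping terms by the positions of the ``strict jumps'' that the inequalities force. Concretely, fix the chain $P = p_1 < p_2 < \cdots < p_n$ labeled by a permutation $w_\alpha$ with descent set $D = \Des(\alpha)$. A $(P,w_\alpha)$-set-valued partition $\sigma$ assigns finite subsets $\sigma(p_1), \sigma(p_2), \ldots, \sigma(p_n)$ of $\mathbb{Z}_{>0}$ satisfying $\sigma(p_j) \le \sigma(p_{j+1})$ if $j \notin D$ and $\sigma(p_j) < \sigma(p_{j+1})$ if $j \in D$. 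I would isolate the degree-$(n+i)$ homogeneous component: such a $\sigma$ uses exactly $n+i$ boxes counted with multiplicity, i.e. $\sum_j \#\sigma(p_j) = n+i$, and since no two consecutive letters of the associated word can be equal (this is exactly condition (3) in the linear-multi-extension definition, coming from $\max \sigma(p_j) \ge \min \sigma(p_{j+1})$ plus the fact that each $\sigma(p_j)$ is a genuine set), the monomial is squarefree-per-variable-within-each-block but of total degree $n+i$.

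The key combinatorial step is to encode each such $\sigma$ by the subset $E$ of $[n+i-1]$ recording where a ``new, strictly larger value'' begins. Reading the values used by $\sigma$ from bottom to top in weakly increasing order — first all elements of $\sigma(p_1)$, then all of $\sigma(p_2)$, etc. — gives a weakly increasing sequence of length $n+i$; record in $E$ the set of positions $k \in [n+i-1]$ at which the $k$-th value is strictly less than the $(k+1)$-st. Then $\mathcal C(E)$ is precisely the composition appearing in the monomial $x_{i_1} x_{i_2} \cdots$ after relabeling the distinct values $1, 2, \ldots$; so the monomials contributing to $L_\alpha^{(i)}$ with a fixed ``descent pattern'' $E$ assemble exactly into $L_{\mathcal C(E)}$ (each $L_{\mathcal C(E)}$ being the sum of squarefree-in-the-appropriate-sense monomials of the right shape). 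It remains to count, for each $E \subset [n+i-1]$, how many $\sigma$ of the chain realize the pattern $E$; I claim this count is exactly $|T(D,E)|$, the number of $i$-extensions of $D$ to $E$. The bijection sends $\sigma \mapsto t$, where $t: [n-1] \to [n+i-1]$ records, for each block boundary $j$ between $\sigma(p_j)$ and $\sigma(p_{j+1})$, the position $t(j) \in E$ in the flattened weakly increasing sequence where that block boundary sits; the defining condition $t(D) \subset E$ says exactly that a strict inequality in $P$ must correspond to an actual strict jump, while the complementary condition $E \setminus t(D) = [n+i-1] \setminus t([n-1])$ says that every other strict jump in the monomial must come from inside a single block $\sigma(p_j)$ (where, for a fixed set $\sigma(p_j)$, the internal jumps are forced) — i.e. positions of $E$ not at block boundaries forced-strict are exactly the non-block-boundary positions.

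I would present this as: (1) set up the flattening map from $\sigma$ to a weakly increasing word plus a marking of the $n-1$ block boundaries; (2) observe that the monomial of $\sigma$ depends only on $E = $ (positions of strict jumps), and that summing over all $\sigma$ with a fixed $E$ and letting the distinct values range over $\mathbb{Z}_{>0}$ yields $L_{\mathcal C(E)}$; (3) show the number of valid boundary-markings compatible with a given $E$ equals $|T(D,E)|$ by exhibiting the explicit bijection with $i$-extensions; and (4) conclude by summing over $E$. Alternatively — and this may be cleaner to write — one can derive the formula from Theorem~\ref{multiexpandK} specialized to $P$ a chain: $\tilde L_\alpha = \tilde K_{(P,w_\alpha)} = \sum_{N} \sum_{w \in \tilde{\mathcal J}_N(P,w_\alpha)} \tilde L_{\mathcal C(w)}$, then iterate/take lowest-degree terms, but since $\tilde L$ appears on both sides this is circular unless handled carefully, so I would favor the direct flattening argument.

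The main obstacle I anticipate is step (3): verifying that the map $\sigma \mapsto t$ is a bijection onto $T(D,E)$, in particular that the two set-theoretic conditions defining an $i$-extension capture exactly ``strict jumps at block boundaries must be honored'' and ``all remaining strict jumps are internal to blocks.'' The subtlety is that within a single set $\sigma(p_j)$ the pattern of jumps is completely rigid (consecutive elements of a set of integers need not be consecutive integers, so there can be jumps, but their positions are determined by the set, not chosen), whereas at a block boundary one has a genuine choice of whether the jump is strict; untangling which positions of $E$ are ``free'' versus ``forced'' and matching this precisely against the condition $E \setminus t(D) = [n+i-1] \setminus t([n-1])$ requires care with the indexing. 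Everything else — that each fixed-$E$ family of monomials sums to $L_{\mathcal C(E)}$, and that the homogeneous components are correctly identified — is routine once the bijection is in hand.
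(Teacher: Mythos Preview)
Your flattening idea is exactly right and matches the paper's approach, but there is a genuine confusion in how you define $E$, and it causes both step (2) and step (3) to fail as stated. You take $E$ to be the set of positions of strict jumps in the flattened weakly increasing word. With that choice, the monomials with a fixed strict-jump set $E$ assemble into the \emph{monomial} quasisymmetric function $M_{\mathcal C(E)}$, not $L_{\mathcal C(E)}$; and the number of block-markings $t$ producing a given such $E$ is strictly larger than $|T(D,E)|$, because an ascent boundary $t(j)$ with $j\notin D$ may or may not land on a strict jump, so the condition $E\setminus t(D)=[n+i-1]\setminus t([n-1])$ need not hold on the nose. Your parenthetical ``every other strict jump in the monomial must come from inside a single block'' is exactly where this breaks: strict jumps can also occur at ascent boundaries.

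The fix---and this is what the paper does---is to partition by the block structure $t$ rather than by the strict-jump pattern of the values. Fix the sizes $|\sigma(p_j)|=t(j)-t(j-1)$ (with $t(0)=0$, $t(n)=n+i$), and define $E:=t(D)\cup\bigl([n+i-1]\setminus t([n-1])\bigr)$ from $t$ alone. For this fixed $t$, flattening is a bijection between $\{\sigma:|\sigma(p_j)|=t(j)-t(j-1)\text{ for all }j\}$ and ordinary $(C',w')$-partitions of a chain of length $n+i$ whose descent set is $E$: strict jumps are \emph{forced} precisely at internal positions and at descent boundaries (together, exactly $E$), and \emph{optional} at ascent boundaries (exactly $t([n-1]\setminus D)=[n+i-1]\setminus E$). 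That is verbatim the defining sum for $L_{\mathcal C(E)}$. Summing over $t$ and grouping by the induced $E$ gives the theorem, since $t\mapsto E$ is exactly the $i$-extension condition and the fiber over $E$ has cardinality $|T(D,E)|$. Your step (3) formulas were already pointing at this; you just need $E$ to be determined by $t$, not by the particular values of $\sigma$.
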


\subsection{Hopf structure}
Next we describe the bialgebra structure of $\mQSym$ using the continuous basis of multi-fundamental quasisymmetric functions. The first step is to 
define the \textit{multishuffle} of two words in a fixed alphabet. To that end, we give the following definition.

\begin{definition}\label{def:multiword}
 Let $a=a_1a_2\cdots a_k$ be a word. We call $w=w_1w_2\cdots w_r$ a 
multiword of $a$ if there exists a non-decreasing, surjective
map $t:[r] \to [k]$ such that $w_j=a_{t(j)}$.
\end{definition}

As an example, consider the permutation $1342$ as a word in $\mathbb{N}$. Then $11333422$ and $1342$ are both multiwords of $1342$, while
$34442$ and $1133244$ are not multiwords of $1342$.

\begin{definition}\label{multishuffledef}
Let $a=a_1a_2\cdots a_k$ and $b=b_1b_2\cdots b_n$ be words with distinct letters.
We say that $w=w_1w_2\cdots w_m$ is a multishuffle of $a$ and $b$ if the following conditions are satisfied:
\begin{enumerate}
\item $w_i \neq w_{i+1}$ for all $i$ 
\item when restricted to $\{a_i\}$, $w$ is a multiword of $a$
\item when restricted to $\{b_j\}$, $w$ is a multiword of $b$.
\end{enumerate}
\end{definition}

Eventually we would like to multishuffle two permutations, which will not have distinct letters. We adjust our definition as follows. Given a permutation $w=w_1w_2\cdots w_k$, define $w[n]=(w_1+n)(w_2+n)\cdots(w_k+n)$ to be the word obtained by adding $n$ to each digit 
entry of $w$. For example, for $w=21$, $w[4]=65$. We then define the multishuffle of two permutations $u\in\mathfrak{S}_n$ and $w$ by
declaring it to be the multishuffle of $u$ and $w[n]$.

Starting with permutations $u=1342$ and $w=21$, we see that $v=16161346252$ is a 
multishuffle of $u=1342$ and $w[4]=65$. We shift $w$ by 4 since 4 is the largest letter in $u$.
If we restrict to the letters in $u$, $v|_u=1113422$ is a multiword of $u$, and similarly
$v|_w[4]=6665$ is a multiword of $w[4]$.

\begin{proposition}\cite[Proposition 5.9]{lam2007combinatorial}
 Let $\alpha$ be a composition of $n$ and $\beta$ be a composition of $m$. Then $$\tilde L_\alpha \tilde L_\beta = \displaystyle \sum_{u \in \rm Sh^{\m}(w_\alpha ,w_\beta[n])} \tilde L_{\mathcal{C}(u)},$$
where the sum is over all multishuffles of $w_\alpha$ and $w_\beta[n]$.
\end{proposition}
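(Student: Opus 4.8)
The plan is to prove the product formula $\tilde L_\alpha \tilde L_\beta = \sum_{u \in \mathrm{Sh}^{\mathfrak{m}}(w_\alpha, w_\beta[n])} \tilde L_{\mathcal{C}(u)}$ by realizing the left-hand side combinatorially via $(P,\theta)$-set-valued partitions on a disjoint union of two chains, and then applying Theorem~\ref{multiexpandK} to that poset. Concretely, let $C_\alpha = (p_1 < \cdots < p_k)$ be the chain labeled by $w_\alpha \in \mathfrak{S}_k$ and $C_\beta = (q_1 < \cdots < q_\ell)$ the chain labeled by $w_\beta[n]$ (shifting labels by $n$ so the two label sets are disjoint), and let $P = C_\alpha \sqcup C_\beta$ be their disjoint union with the inherited labeling $\theta$. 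The key observation is that a $(P,\theta)$-set-valued partition is exactly an independent choice of a $(C_\alpha, w_\alpha)$-set-valued partition and a $(C_\beta, w_\beta[n])$-set-valued partition, since there are no covering relations between the two chains; hence the monomial generating function factors as $\tilde K_{(P,\theta)} = \tilde K_{(C_\alpha,w_\alpha)} \cdot \tilde K_{(C_\beta,w_\beta[n])} = \tilde L_\alpha \tilde L_\beta$.

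Next I would compute $\tilde K_{(P,\theta)}$ the other way, using Theorem~\ref{multiexpandK}: it equals $\sum_{N} \sum_{w \in \tilde{\mathcal{J}}_N(P,\theta)} \tilde L_{\mathcal{C}(w)}$. So the heart of the argument is identifying the multi-Jordan-Holder set $\tilde{\mathcal{J}}(P,\theta)$ of a disjoint union of two chains with the set of multishuffles $\mathrm{Sh}^{\mathfrak{m}}(w_\alpha, w_\beta[n])$. A linear multi-extension $e: P \to 2^{[N]}$ with the three defining properties, read off as the word $\theta(e^{-1}(1))\cdots\theta(e^{-1}(N))$, produces an $\mathfrak{m}$-permutation (property (3) gives the no-equal-consecutive-letters condition) whose restriction to the label set of $C_\alpha$ respects the chain order --- precisely a multiword of $w_\alpha$ --- and likewise for $C_\beta$ with $w_\beta[n]$; conversely any word meeting the three conditions of Definition~\ref{multishuffledef} determines, via the positions of each letter, a valid linear multi-extension. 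One must check this correspondence is a bijection and that it is compatible with the passage to descent compositions, so that $\mathcal{C}(w)$ on the poset side matches $\mathcal{C}(u)$ on the multishuffle side; this is immediate since the underlying word is literally the same.

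I expect the main obstacle to be bookkeeping around the label shift and the precise matching of the poset's ordering condition ``$e(x) < e(y)$ if $x < y$'' with the ``multiword of'' condition. Specifically, I need to verify that requiring each $e(p_i)$ to lie strictly below $e(p_{i+1})$ in the $\tilde{\mathbb{P}}$-order translates exactly into the restriction of the output word to the $w_\alpha$-letters being a \emph{non-decreasing surjective} preimage word in the sense of Definition~\ref{def:multiword}, and that no spurious constraints are introduced by property (2) (each $i \in [N]$ lies in exactly one $e(x)$) or by the interaction of the two chains through property (3). Once the bijection $\tilde{\mathcal{J}}(P,\theta) \leftrightarrow \mathrm{Sh}^{\mathfrak{m}}(w_\alpha, w_\beta[n])$ is established, the proposition follows by combining the two evaluations of $\tilde K_{(P,\theta)}$. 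A secondary point to address is that all sums here are infinite of unbounded degree, so I should note that the identity holds in the completed space $\mathbb{Z}[[x_1,x_2,\ldots]]$ and that the continuous-basis framework of Section~\ref{mQSym} legitimizes these manipulations.
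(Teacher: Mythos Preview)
Your approach is correct and is precisely the standard argument: realize $\tilde L_\alpha \tilde L_\beta$ as $\tilde K_{(P,\theta)}$ for the disjoint union of the two labeled chains, then apply Theorem~\ref{multiexpandK} and identify $\tilde{\mathcal{J}}(P,\theta)$ with $\mathrm{Sh}^{\mathfrak{m}}(w_\alpha,w_\beta[n])$. Note, however, that the present paper does not supply its own proof of this proposition; it is quoted from \cite[Proposition~5.9]{lam2007combinatorial}, and the argument there follows exactly the line you outline (the classical $P$-partition proof of the shuffle product of fundamental quasisymmetric functions, adapted to the set-valued setting).
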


Note that this is an infinite sum whose lowest degree terms are exactly those of $L_\alpha L_\beta$, the product of the two corresponding fundamental quasisymmetric functions.

To define the coproduct, we need the following definition. 

\begin{definition}\label{def:cuut}
Let $w=w_1w_2\cdots w_k$ be a permutation. Then Cuut$(w)$ is the set of terms of the form $w_1w_2\cdots w_i \otimes w_{i+1}w_{i+2}\cdots w_k$ for $i\in[0,k]$ or of 
the form $w_1w_2\cdots w_i\otimes w_i w_{i+1}\cdots w_k$ for $i\in [1,k]$.
\end{definition}

For example, Cuut$(132)=\{\emptyset\otimes 132, 1\otimes 132, 1\otimes 32, 13\otimes 32, 13\otimes 2, 132\otimes 2, 132\otimes \emptyset \}$.
Notice how this compares to the cut coproduct of the shuffle algebra described in Section~\ref{sec:Hopf} to understand the strange spelling.

\begin{proposition}\cite[Proposition 5.10]{lam2007combinatorial} We have that
 $$\Delta(\tilde L_\alpha)=\tilde L_\alpha (x,y) = \displaystyle\sum_{u\otimes u' \in {\rm Cuut}(w_\alpha)} \tilde 
 L_{\mathcal{C}(u)}(x)\otimes \tilde L_{\mathcal{C}(u')}(y).$$
\end{proposition}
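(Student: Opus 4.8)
The plan is to prove the coproduct formula $\Delta(\tilde L_\alpha) = \sum_{u\otimes u' \in {\rm Cuut}(w_\alpha)} \tilde L_{\mathcal{C}(u)}(x)\otimes \tilde L_{\mathcal{C}(u')}(y)$ by unwinding the definition of the coproduct on $\mQSym$ as a ring of power series: $\Delta$ sends a symmetric-type series $f(x_1,x_2,\ldots)$ to $f(x_1,x_2,\ldots,y_1,y_2,\ldots)$, i.e.\ we substitute a second, strictly larger block of variables $y$ after all the $x$'s, and then separate variables. So the first step is to write $\tilde L_\alpha = \tilde K_{(P,w_\alpha)} = \sum_{\sigma \in \tilde{\mathcal{A}}(P,w_\alpha)} \prod_i x_i^{\#\sigma^{-1}(i)}$ where $P$ is the chain $p_1 < p_2 < \cdots < p_k$ labeled by $w_\alpha$, and ask: what does a $(P,w_\alpha)$-set-valued partition look like when its values are allowed to live in the ordered set of positive integers \emph{followed by} a second copy (the $y$-variables)?

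The key combinatorial step is the following. Fix $\sigma \in \tilde{\mathcal{A}}(P,w_\alpha)$ valued in the merged alphabet. Because $P$ is a chain $p_1 < \cdots < p_k$ and the ordering on finite subsets is by $\max < \min$ (resp.\ $\le$), the sets $\sigma(p_1), \sigma(p_2), \ldots, \sigma(p_k)$ are ``weakly increasing'' along the chain, with the comparison between consecutive sets being strict exactly when $w_\alpha$ has a descent there. Since the $x$-block entirely precedes the $y$-block, there is a well-defined breakpoint: let $j$ be the largest index such that $\sigma(p_j)$ still contains an $x$-variable (take $j=0$ if none do). Then $\sigma(p_1),\ldots,\sigma(p_j)$ are $x$-valued, $\sigma(p_{j+1}),\ldots,\sigma(p_k)$ are $y$-valued. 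There are two cases for the ``middle'' set. Generically $\sigma(p_j)$ and $\sigma(p_{j+1})$ straddle cleanly and we get a plain cut $w_1\cdots w_j \otimes w_{j+1}\cdots w_k$ where $w = w_\alpha$; the left factor records a $(C',w|_{[1,j]})$-set-valued partition in the $x$-variables and the right factor a $(C'',\text{std}(w|_{[j+1,k]}))$-one in the $y$-variables, giving the contribution $\tilde L_{\mathcal{C}(w_1\cdots w_j)}(x)\tilde L_{\mathcal{C}(w_{j+1}\cdots w_k)}(y)$. The subtlety --- and the reason for the ``Cuut'' (double-cut) rather than ``Cut'' --- is that a single element $p_i$ can have $\sigma(p_i)$ containing \emph{both} $x$- and $y$-variables; because condition in the set-valued partition definition only constrains $\max$ versus $\min$ of consecutive sets, such a straddling set is legitimate, and it splits as (an $x$-part)\,$\otimes$\,(a $y$-part) that \emph{share} the element $p_i$, producing exactly the doubled index $w_1\cdots w_i \otimes w_i w_{i+1}\cdots w_k$ appearing in Definition~\ref{def:cuut}.

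So the argument is: exhibit a weight-preserving bijection between $\tilde{\mathcal{A}}(P,w_\alpha)$ valued in $(x\text{'s then }y\text{'s})$ and the disjoint union, over $u\otimes u' \in {\rm Cuut}(w_\alpha)$, of pairs $(\sigma_x, \sigma_y)$ with $\sigma_x \in \tilde{\mathcal{A}}(C_x, w_u)$ valued in the $x$'s and $\sigma_y \in \tilde{\mathcal{A}}(C_y, w_{u'})$ valued in the $y$'s, where $C_x, C_y$ are the sub-chains and $w_u = \mathcal{C}(u)$ etc. One direction cuts $\sigma$ at the breakpoint as above (recording whether the middle set is shared); the other direction glues $\sigma_x$ below $\sigma_y$, checking that the one covering relation across the cut still satisfies the set-valued partition inequality --- this holds because every $x$-set is $<$ or $\le$ every $y$-set, and strictness or not is exactly dictated by whether $u,u'$ overlap, which in turn is recorded in $\mathcal{C}(u), \mathcal{C}(u')$. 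Summing $\prod x_i^{\#\sigma_x^{-1}(i)}$ over $\sigma_x$ gives $\tilde L_{\mathcal{C}(u)}(x)$ by definition, and likewise on the $y$ side, yielding the claimed formula after grouping by $u\otimes u'$.

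The main obstacle is bookkeeping around the shared-middle-element case: one must be careful that when $\sigma(p_i)$ straddles, splitting it into $\sigma(p_i)\cap x$ and $\sigma(p_i)\cap y$ (both necessarily nonempty) produces genuinely valid set-valued partitions on the two sub-chains --- in particular that neither piece is empty, which is why the breakpoint index $i$ ranges over $[1,k]$ for the doubled terms but $[0,k]$ for the plain cuts, matching Definition~\ref{def:cuut} exactly. A clean way to organize this is to first prove the analogous (simpler) statement for $\tilde K_{(P,\theta)}$ with $P$ an \emph{arbitrary} poset --- the breakpoint becomes an order ideal of $P$ together with the choice, for each covering relation crossed, of whether the two sides share a common element --- and then specialize to the chain; I would likely fold this into, or cite alongside, the bijection promised in the proof of Theorem~\ref{expandKhat}, keeping the present proof short by reducing to Theorem~\ref{multiexpandK} applied to the chain together with the observation that ${\rm Cuut}(w_\alpha)$ enumerates precisely the linear multi-extensions of the two-block poset $C_x \oplus C_y$ restricted to breakpoints. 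Either route, the heart of the matter is the one covering relation spanning the cut, and verifying the overlap/non-overlap dichotomy there translates correctly through the $\mathcal{C}(\cdot)$ correspondence.
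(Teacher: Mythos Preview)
The paper does not actually prove this proposition; it is quoted verbatim from \cite{lam2007combinatorial} (their Proposition~5.10) and stated without argument. So there is no ``paper's own proof'' to compare against.

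That said, your proof is correct and is essentially the argument one would expect (and is in fact the argument in \cite{lam2007combinatorial}): interpret $\Delta$ as substitution of the doubled alphabet $x_1 < x_2 < \cdots < y_1 < y_2 < \cdots$, then for each $\sigma \in \tilde{\mathcal A}(P,w_\alpha)$ locate the unique breakpoint along the chain where the sets pass from the $x$-block to the $y$-block. The dichotomy between a clean cut (no $p_i$ has $\sigma(p_i)$ meeting both blocks) and a straddling cut (exactly one $p_i$ has $\sigma(p_i)$ meeting both, and it is then split into its nonempty $x$- and $y$-parts) is precisely the dichotomy between the terms $w_1\cdots w_i \otimes w_{i+1}\cdots w_k$ and $w_1\cdots w_i \otimes w_i w_{i+1}\cdots w_k$ in $\mathrm{Cuut}(w_\alpha)$. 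Your verification that the inverse map (gluing $\sigma_x$ below $\sigma_y$, taking the union at the shared index in the overlapping case) produces a valid element of $\tilde{\mathcal A}(P,w_\alpha)$ is the only thing one has to check, and it goes through because every $x$-value is strictly less than every $y$-value, so the one new covering constraint across the cut is automatic.

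One small remark: your closing suggestion about relating $\mathrm{Cuut}(w_\alpha)$ to linear multi-extensions of a ``two-block poset $C_x \oplus C_y$'' is not quite the right picture and would muddy the write-up; the direct breakpoint bijection you describe first is already the whole proof and needs no such detour.
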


\begin{example}
 Let $\alpha=(1)$ and $\beta=(2,1)$ with $w_\alpha=1$ and $w_\beta=231$. Then
$$\tilde L_\alpha \tilde L_\beta = \tilde L_{(3,1)}+\tilde L_{(1,2,1)}+\tilde L_{(2,2)}+\tilde L_{(2,1,1)}+\tilde L_{(3,1,1)}+\tilde L_{(2,2,1,1)}+\tilde L_{(2,2,1,2)}+\ldots,$$
where the terms listed correspond to the multishuffles $1342$, $3142$, $3412$, $3421$, $13421$, $131421$, and $3414212$ of $w_\alpha$ and $w_\beta[1]$. 
We also compute 
\begin{eqnarray*}
\Delta(\tilde L_\beta)&=&\emptyset \otimes \tilde L_{(2,1)}+\tilde L_{(1)}\otimes \tilde L_{(2,1)}+\tilde L_{(1)}\otimes \tilde L_{(1,1)}+\tilde L_{(2)}\otimes \tilde L_{(1,1)} \\
& & +\tilde L_{(2)}\otimes \tilde L_{(1)}
+\tilde L_{(2,1)}\otimes \tilde L_{(1)}+\tilde L_{(2,1)}\otimes \emptyset.
\end{eqnarray*}
\end{example}

We give a combinatorial formula for the antipode map in $\mQSym$ in Theorem \ref{thm:mQSymAntipode}. In Section \ref{sec:Lhat}, we 
give an antipode map in terms of a new basis introduced within the section.

\section{The Hopf algebra of Multi-noncommutative symmetric functions}\label{MNSym}

The Hopf algebra of noncommutative symmetric functions ($\NSym$) is dual to that of quasisymmetric functions. 
We next describe a $K$-theoretic analogue called the Multi-noncommutative symmetric functions or $\MNSym$. As with $\QSym$, we recommend first being familiar with $\NSym$ as recommend \cite{stanley1999enumerative} as a reference. We recall 
the bialgebra structure of $\MNSym$ as given in \cite{lam2007combinatorial} and develop a combinatorial formula for its antipode map.

\subsection{Multi-noncommutative ribbon functions and bialgebra structure}
$\MNSym$ has a basis $\{\tilde R_\alpha\}$ of Multi-noncommutative ribbon
functions indexed by compositions, which is an analogue to the basis of noncommutative ribbon functions $\{R_\alpha\}$ for $\NSym$. 

A \textit{ribbon diagram} is a connected skew shape $\lambda/\mu$ that contains no $2\times 2$ square. There is an easy bijection between compositions and ribbon diagrams, where a ribbon diagram corresponds to the composition obtained by reading the sizes of its rows from bottom to top. See Example~\ref{ex:multiplyRs}.
It will be useful to think of $\{\tilde R_\alpha\}$ as being indexed by ribbon diagrams using this correspondence. 

We first introduce a product structure on $\{\tilde{R_\alpha}\}$
as given in \cite{lam2007combinatorial}.

\begin{proposition}\cite[Proposition 8.1]{lam2007combinatorial}
 Let $\alpha=(\alpha_1,\ldots ,\alpha_k)$ and $\beta=(\beta_1,\ldots ,\beta_m)$ be compositions. Then 
$$\tilde R_\alpha \bullet \tilde R_\beta=\tilde R_{\alpha \lhd \beta}+\tilde R_{\alpha \cdot \beta} + \tilde R_{\alpha \rhd \beta},$$ where
$\alpha \lhd \beta = (\alpha_1,\ldots , \alpha_k,\beta_1,\ldots , \beta_m)$, $\alpha \cdot \beta = (\alpha_1,\ldots,\alpha_{k-1},\alpha_k+\beta_1-1,\beta_2,\ldots,\beta_m)$, 
and $\alpha \rhd \beta =(\alpha_1,\ldots, \alpha_k+\beta_1,\beta_2, \ldots ,\beta_m)$.
\end{proposition}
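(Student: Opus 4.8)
The statement to prove is the product formula $\tilde R_\alpha \bullet \tilde R_\beta = \tilde R_{\alpha \lhd \beta} + \tilde R_{\alpha \cdot \beta} + \tilde R_{\alpha \rhd \beta}$ (Proposition 8.1 of Lam--Pylyavskyy). Since this is quoted as a known result, the natural route is to derive it from the duality between $\MNSym$ and $\mQSym$ together with the already-recorded coproduct formula $\Delta(\tilde L_\alpha) = \sum_{u\otimes u' \in {\rm Cuut}(w_\alpha)} \tilde L_{\mathcal{C}(u)} \otimes \tilde L_{\mathcal{C}(u')}$. The plan is: first, set up the pairing $\langle \tilde R_\alpha, \tilde L_\beta\rangle = \delta_{\alpha,\beta}$ under which $\MNSym$ is dual to $\mQSym$ (this is the content of \cite{lam2007combinatorial}); then translate the product of $\tilde R$'s into a computation of $\langle \tilde R_\alpha \bullet \tilde R_\beta, \tilde L_\gamma\rangle = \langle \tilde R_\alpha \otimes \tilde R_\beta, \Delta(\tilde L_\gamma)\rangle$, so that the coefficient of $\tilde R_\gamma$ in $\tilde R_\alpha\bullet\tilde R_\beta$ equals the coefficient of $\tilde L_\alpha \otimes \tilde L_\beta$ in $\Delta(\tilde L_\gamma)$.

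\textbf{Key steps.} Fix $\gamma$ a composition of $n+m$ (degrees must match, so only such $\gamma$ can appear), and let $w_\gamma$ be a permutation with $\mathcal{C}(w_\gamma) = \gamma$. I would enumerate the elements of ${\rm Cuut}(w_\gamma)$ and ask for which of them the pair $(\mathcal{C}(u), \mathcal{C}(u'))$ equals $(\alpha,\beta)$. A clean-cut $w_1\cdots w_i \otimes w_{i+1}\cdots w_k$ contributes exactly when the descent composition of the first $i$ letters is $\alpha$ and of the last $k-i$ letters is $\beta$; by the standard descent-set bookkeeping for fundamental quasisymmetric functions this forces $\gamma \in \{\alpha\lhd\beta,\ \alpha\cdot\beta\}$ depending on whether position $i$ is a descent of $w_\gamma$ or not. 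A repeated-letter cut $w_1\cdots w_i \otimes w_i w_{i+1}\cdots w_k$ (the extra ``multi'' cuts coming from the $K$-theoretic deformation) contributes exactly when $\gamma = \alpha \rhd \beta$: the shared middle letter $w_i$ is the non-descent that fuses $\alpha_k$ and $\beta_1$ into $\alpha_k+\beta_1$. One then checks that for a suitably chosen $w_\gamma$ each of these three possibilities is realized by exactly one cut (or, more robustly, that the total multiplicity is $1$ in each case regardless of which representative $w_\gamma$ is used), giving the three terms with coefficient $1$ and nothing else.

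\textbf{Alternative / main obstacle.} An alternative is a direct combinatorial proof: realize $\tilde R_\alpha$ as a generating function over set-valued fillings of the ribbon shape $\alpha$ (as in the $\mQSym$ picture dualized), and build a weight-preserving bijection between set-valued fillings of the disjoint-or-overlapping gluings of the ribbons for $\alpha$ and $\beta$ and set-valued fillings of the ribbons $\alpha\lhd\beta$, $\alpha\cdot\beta$, $\alpha\rhd\beta$. The subtle point in either approach — and the step I expect to be the main obstacle — is accounting correctly for the \emph{third} term $\tilde R_{\alpha\rhd\beta}$, i.e.\ the ``overlap by one box with a shared value'' contribution that is absent in the classical $\NSym$ identity $R_\alpha R_\beta = R_{\alpha\lhd\beta} + R_{\alpha\cdot\beta}$. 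This term is precisely the shadow of the ${\rm Cuut}$ operation's second family of terms (the $w_1\cdots w_i \otimes w_i\cdots w_k$ cuts), and one must verify it appears with coefficient exactly $1$ and does not collide with or double-count against the $\alpha\cdot\beta$ term; keeping the descent/non-descent dichotomy at the gluing position straight is where the care is needed. Since Proposition~5.10 is already available in the excerpt, the duality argument is the shortest rigorous path, and I would carry it out as above.
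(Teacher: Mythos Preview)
The paper does not prove this proposition; it is quoted from \cite{lam2007combinatorial} without argument, so there is no in-paper proof to compare against. Your duality strategy is the natural one and does work, but you have interchanged the roles of $\alpha\cdot\beta$ and $\alpha\rhd\beta$.

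A clean cut $w_1\cdots w_i \otimes w_{i+1}\cdots w_k$ has pieces of lengths $i$ and $k-i$, so if these carry descent compositions $\alpha$ and $\beta$ then $|\gamma|=k=|\alpha|+|\beta|=n+m$; the dichotomy on whether position $n$ is a descent of $w_\gamma$ then yields $\gamma\in\{\alpha\lhd\beta,\ \alpha\rhd\beta\}$, the two classical $\NSym$ terms, both of size $n+m$. (In this paper's notation the classical identity reads $R_\alpha R_\beta=R_{\alpha\lhd\beta}+R_{\alpha\rhd\beta}$, not $R_{\alpha\lhd\beta}+R_{\alpha\cdot\beta}$ as you wrote.) Conversely, a repeated-letter cut $w_1\cdots w_i\otimes w_i w_{i+1}\cdots w_k$ has pieces of lengths $i$ and $k-i+1$, forcing $|\gamma|=k=n+m-1$; computing descent sets gives $D(\gamma)=D(\alpha)\cup(D(\beta)+n-1)$, i.e.\ $\gamma=\alpha\cdot\beta$, the genuinely new $K$-theoretic term sitting one degree below. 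Your parenthetical ``degrees must match, so only $\gamma$ of size $n+m$ can appear'' is therefore also incorrect: $\MNSym$ is only filtered, and $\alpha\cdot\beta$ has size $n+m-1$.

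Once these labels are swapped, the rest of your outline --- uniqueness of the contributing cut for each of the three $\gamma$'s, hence coefficient~$1$ each, and no other $\gamma$ contributing --- goes through cleanly.
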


\begin{example}\label{ex:multiplyRs}
 It is helpful to think of the product using ribbon diagrams. From the statement above, we have
$$ \tilde R_{(2,2)}\bullet\tilde R_{(1,2)}=\tilde R_{(2,2,1,2)}+\tilde R_{(2,2,2)}+\tilde R_{(2,3,2)}.$$ In pictures, this is shown in Figure~\ref{fig:Rmult}.
\begin{figure}[h!]
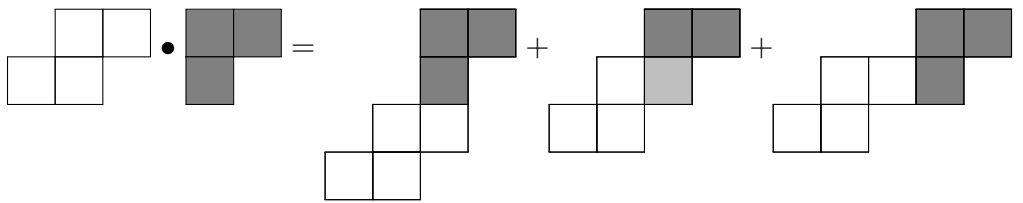

\begin{center}
\ydiagram{1+2,2} $\text{  }\bullet$ \ydiagram[*(gray)]{2, 1} \text{  }= \ydiagram{2+2,2+1,1+2,2}*[*(gray)]{2+2, 2+1}  *{2+2,2+1,1+2,2}
$\text{  }+$\ydiagram{2+2,1+2,2}*[*(gray)]{2+2}*[*(lightgray)]{2+2,2+1}*{2+2,1+2,2}
$\text{  }+$\ydiagram{3+2,1+3,2}*[*(gray)]{3+2,3+1}*{3+2,1+3,2}\text{  }.
\end{center}
\caption{Multiplying $\tilde R_{(2,2)}$ and $\tilde R_{(1,2)}$}
\label{fig:Rmult}
\end{figure}
\end{example}

In contrast to the product in $\mQSym$, the product in $\MNSym$ is a finite sum whose highest degree terms are those of the corresponding 
product $R_\alpha R_\beta$ in $\NSym$.

\begin{proposition} The coproduct of a basis element is
 $$\Delta(\tilde R_\alpha)=\sum_{w_\alpha \in {\rm Sh}^\m (w_\beta,w_\delta[i])}{\tilde R_\beta \otimes \tilde R_\delta},$$ where $i \in \mathbb{N}$ and 
$w_\beta\in \mathfrak{S}_i$.
\end{proposition}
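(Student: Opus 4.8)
The plan is to deduce the coproduct formula for $\MNSym$ from the product formula for $\mQSym$ by exploiting the duality between these two Hopf algebras, using Lemma~\ref{lem:pairingantipode}-style bookkeeping (or rather just the dual-pairing principle underlying it). The Hopf algebras $\mQSym$ and $\MNSym$ are dual under the pairing $\langle \tilde L_\alpha, \tilde R_\beta\rangle = \delta_{\alpha,\beta}$, so the structure constants for the product on one side are the structure constants for the coproduct on the other. Concretely, I would first record that, by Proposition~5.9 (the multishuffle product formula for $\mQSym$), for compositions $\beta$ of $i$ and $\delta$ of $j$ we have $\tilde L_\beta \tilde L_\delta = \sum_{u} \tilde L_{\mathcal{C}(u)}$ summed over multishuffles $u$ of $w_\beta$ and $w_\delta[i]$. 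Dualizing, the coefficient of $\tilde R_\beta \otimes \tilde R_\delta$ in $\Delta(\tilde R_\alpha)$ equals the coefficient of $\tilde L_\alpha$ in $\tilde L_\beta \tilde L_\delta$, which is the number of multishuffles $u$ of $w_\beta$ and $w_\delta[i]$ with $\mathcal{C}(u) = \alpha$, i.e. with $w_\alpha$ a valid representative of the multishuffle.

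The second step is to verify that this multiplicity is always $0$ or $1$, so that the coproduct is genuinely a sum over the set $\{(\beta,\delta) : w_\alpha \in \mathrm{Sh}^{\m}(w_\beta, w_\delta[i])\}$ with no coefficients, matching the stated formula. This is the combinatorial heart of the argument: given the permutation $w_\alpha$, I need to show that it can arise as a multishuffle of $w_\beta$ and $w_\delta[i]$ in at most one way, and to identify exactly when it does. The point is that $w_\alpha$ itself is a permutation (its letters are distinct), so reading $w_\alpha$ from left to right, each letter either belongs to the "$w_\beta$ part" (values $\le i$) or the "$w_\delta[i]$ part" (values $> i$); the choice of $i$ and the pattern of small-versus-large values then determines $\beta$, $\delta$, and the surjective maps witnessing that $w_\alpha$ restricted to each part is a multiword — all of this is forced, hence unique. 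One must also confirm the compatibility with the two descriptions: the "Cuut" operation in the $\mQSym$ coproduct is dual to exactly this split-a-permutation operation, and the two possible split-types (the strict cut $w_1\cdots w_i \otimes w_{i+1}\cdots w_k$ versus the overlapping cut $w_1\cdots w_i \otimes w_i\cdots w_k$) correspond precisely to whether the multishuffle interlacing is "disjoint" or "shares a repeated value at the seam."

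A cleaner alternative, which I would actually prefer to write, is to avoid re-deriving the pairing and instead observe that Lam--Pylyavskyy already establish that $\MNSym$ and $\mQSym$ are dual Hopf algebras with $\{\tilde R_\alpha\}$ dual to $\{\tilde L_\alpha\}$, and that the multishuffle-product formula for $\{\tilde L_\alpha\}$ is Proposition~5.9; then the coproduct formula for $\{\tilde R_\alpha\}$ is the formal transpose, and the only thing requiring argument is the coefficient-free statement. So the proof reduces to: (1) cite duality and Proposition~5.9; (2) write $\langle \Delta(\tilde R_\alpha), \tilde L_\beta \otimes \tilde L_\delta\rangle = \langle \tilde R_\alpha, \tilde L_\beta \tilde L_\delta\rangle = \#\{u \in \mathrm{Sh}^{\m}(w_\beta, w_\delta[i]) : \mathcal{C}(u) = \alpha\}$; (3) prove this count is $\le 1$ by the forcing argument above.

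\textbf{Main obstacle.} The routine part is the duality bookkeeping; the genuine work is step (3), the multiplicity-one claim. The subtlety is that although $w_\alpha$ has distinct letters, a multishuffle involves the non-decreasing surjections $t$ from the definition of multiword, and one must check that specifying which positions of $w_\alpha$ come from the left factor and which from the right leaves no freedom in reconstructing $t$ — in particular handling the "no two consecutive letters equal" constraint and the seam behavior (where a repeated value may be split between the two factors) without either over- or under-counting. Getting the indexing set on the right-hand side (the condition $w_\alpha \in \mathrm{Sh}^{\m}(w_\beta, w_\delta[i])$ ranging over all $i$ and all $w_\beta \in \mathfrak{S}_i$) to match exactly the support of the transposed product is where care is needed.
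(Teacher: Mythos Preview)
The paper does not supply a proof of this proposition; it is stated without argument and (unlike the neighbouring product formula, attributed to \cite[Proposition~8.1]{lam2007combinatorial}) without citation. Your plan---dualize the multishuffle product in $\mQSym$ via the pairing $\langle \tilde L_\alpha,\tilde R_\beta\rangle=\delta_{\alpha,\beta}$ from \cite[Theorem~8.4]{lam2007combinatorial}---is the natural argument and is clearly what is being invoked implicitly. Your step~(2) correctly identifies the coefficient of $\tilde R_\beta\otimes\tilde R_\delta$ in $\Delta(\tilde R_\alpha)$ as
\[
\#\{\,u\in{\rm Sh}^{\m}(w_\beta,w_\delta[i]) : \mathcal{C}(u)=\alpha\,\}.
\]

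Step~(3), however, does not go through. You argue that this count is at most $1$ because ``$w_\alpha$ itself is a permutation (its letters are distinct)'', but the multishuffles $u$ being counted are \emph{not} the permutation $w_\alpha$: they are words of length $|\alpha|$ in the alphabet $[\,|\beta|+|\delta|\,]$ with descent composition $\alpha$, and when $|\alpha|>|\beta|+|\delta|$ they must repeat letters. For a concrete failure, take $\beta=(1)$, $\delta=(2)$, $\alpha=(2,2)$: among length-$4$ multishuffles of $1$ and $23$, both $1213$ and $2313$ have descent set $\{2\}$, so the coefficient of $\tilde R_{(1)}\otimes\tilde R_{(2)}$ in $\Delta(\tilde R_{(2,2)})$ is $2$. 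Meanwhile no permutation $w_{(2,2)}\in\mathfrak S_4$ can lie in ${\rm Sh}^{\m}(1,23)$ at all, since it uses the letter $4$. Thus the ``forcing'' argument---which tacitly replaces $u$ by the distinct-letter representative $w_\alpha$---proves the wrong thing, and the $0$/$1$ conclusion is false in general. The correct dual formula carries the multiplicity from step~(2); the proposition's summation should be read as counting such $u$ (equivalently, the notation ``$w_\alpha$'' there must be interpreted loosely as any word with descent composition $\alpha$, not as a fixed permutation), and no multiplicity-one lemma is available or needed.
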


Note that since multishuffles of $w_\beta$ and $w_\delta[i]$ may not have adjacent letters that are equal, we may define the descent set of a multishuffle of 
$w_\beta$ and $w_\delta[i]$ in the usual way.

\begin{example}\label{MNSymcoproductexample}
 In general, computing the coproduct in $\MNSym$ is not an easy task. However, for compositions with only one part, we have
$$\Delta(\tilde R_{(n)})=\tilde R_{(n)}\otimes 1 + \tilde R_{(n-1)}\otimes \tilde R_{(1)}+\tilde R_{(n-2)}\otimes \tilde R_{(2)} + \ldots \tilde R_{(1)}\otimes \tilde R_{(n-1)}+ 1 \otimes \tilde R_{(n)}$$
because the only way that a multishuffle of two permutations results in an increasing sequence is for it to be the concatenation of two increasing permutations. 
We use this fact in the proof of the antipode in $\MNSym$.
\end{example}

\subsection{Antipode map for $\MNSym$}

Suppose we have a ribbon shape corresponding to $\alpha$, a composition of $n$. We say that ribbon shape $\beta$
is a \textit{merging} of ribbon shape $\alpha$ if we can obtain shape $\beta$ from shape $\alpha$ by merging pairs of boxes that share an edge. 
The order in which the pairs are merged does not matter, only set of boxes that were merged. Let $M_{\alpha,\beta}$
be the number of ways to obtain shape $\beta$ from shape $\alpha$ by merging. We will label each box in the ribbon shape 
to keep track of our actions. 

\begin{example}
Let $\alpha = (2,2,1)$ and $\beta=(2,1)$. Then $M_{\alpha,\beta}=3$. The labeled ribbon shape $\alpha$ and the three mergings 
resulting in shape $\beta$ are shown in Figure~\ref{fig:mergings}.

\ytableausetup{boxsize=.55cm}
\begin{figure}[h!]
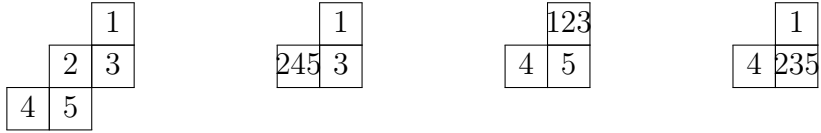

\begin{center}
 $\text{      }$
\begin{ytableau}
\none & \none & 1  \\
\none & 2 & 3 \\
4 & 5 
\end{ytableau}\hspace{.7in}
\begin{ytableau}
 \none & 1 \\
245 & 3
\end{ytableau}\hspace{.7in}
\begin{ytableau}
 \none & 123 \\
4 & 5
\end{ytableau}\hspace{.7in}
\begin{ytableau}
 \none & 1 \\
4 & 235
\end{ytableau}
\end{center}
\caption{Ribbon shape $(2,2,1)$ and its three mergings of ribbon shape $(2,1)$}
\end{figure}
\label{fig:mergings}
\end{example}

In the following sections, $\omega$ will denote the fundamental involution of the symmetric functions defined by $\omega(e_n)=h_n$ for elementary symmetric function $e_n$, complete homogeneous symmetric function $h_n$, and for all $n$. 

If $\alpha=(\alpha_1,\ldots,\alpha_k)$ is a composition of $n$, define $rev(\alpha)=(\alpha_k,\ldots,\alpha_1)$. Now let $\omega(\alpha)$ be the unique composition of $n$ whose partial sums $S_{\omega(\alpha)}$ form the complementary set within $[n-1]$ to the partial sums $S_{rev(\alpha)}$. Alternatively, we may think of the ribbon shape $\alpha$ as $\lambda / \mu$ for Young diagrams $\lambda$ and $\mu$. Then $\omega(\alpha)$ is the ribbon shape $\lambda^t/ \mu^t$, where $\lambda^t$ and $\mu^t$ are the transposes of $\lambda$ and $\mu$, respectively. The number of blocks in each row of $\omega(\alpha)$ reading from bottom to top corresponds to the number of blocks in each column of $\alpha$ reading from right to left. For example, if $\alpha = (2,1,1,3)$, $\omega(\alpha)=(1,1,4,1)$.

\begin{theorem}\label{MNSymAntipode}
 Let $\alpha$ be a composition of $n$. Then $$S(\tilde R_\alpha) = (-1)^n \displaystyle\sum_\beta M_{\omega(\alpha),\beta} \tilde R_\beta,$$ where we sum of over all compositions $\beta$.
\end{theorem}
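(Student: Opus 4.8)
The plan is to verify the defining equation of the antipode directly on the proposed formula, using the one-part coproduct computed in Example~\ref{MNSymcoproductexample} together with the anti-endomorphism property from Proposition~\ref{antiendomorphism}. Since $\MNSym$ is the free associative algebra generated by $\tilde R_{(n)}$, $n\geq 1$, and $S$ is an algebra anti-endomorphism, it suffices to pin down $S(\tilde R_{(n)})$ for all $n$ and then check that the resulting formula on a general $\tilde R_\alpha$ agrees with $(-1)^{|\alpha|}\sum_\beta M_{\omega(\alpha),\beta}\tilde R_\beta$. First I would treat the single-row case: from $\Delta(\tilde R_{(n)})=\sum_{i=0}^n \tilde R_{(i)}\otimes \tilde R_{(n-i)}$ (with $\tilde R_{(0)}=1$), the identity $\sum S(\tilde R_{(i)})\tilde R_{(n-i)}=0$ for $n\geq 1$ recursively determines $S(\tilde R_{(n)})$. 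I would show by induction that $S(\tilde R_{(n)}) = (-1)^n\sum_{\beta}\tilde R_\beta$, where $\beta$ ranges over all compositions of every $m\le n$ with a suitable multiplicity — in fact exactly $M_{\omega((n)),\beta} = M_{(1^n),\beta}$, the number of mergings of a vertical strip of $n$ boxes; note $\omega((n))=(1^n)$, a single column, and mergings of a column into a column of $\beta$-shape correspond to choosing which adjacent pairs to contract. This recursion is essentially the one for the antipode of $h_n$ in $\Sym$ decorated by the merging data, and I would check the combinatorial identity $\sum_{i+j=n}(-1)^i(\text{mergings of column }i)(\text{row }j)=0$ by an inclusion–exclusion / sign-reversing involution on the boxes of the column.

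Next I would pass to general $\alpha=(\alpha_1,\dots,\alpha_k)$. Writing $\tilde R_\alpha$ in terms of the generators is awkward, but one does not need an explicit expansion: instead I would argue that the claimed formula $T(\tilde R_\alpha):=(-1)^n\sum_\beta M_{\omega(\alpha),\beta}\tilde R_\beta$ is itself an algebra anti-endomorphism, i.e. $T(\tilde R_\alpha\bullet\tilde R_\gamma)=T(\tilde R_\gamma)\bullet T(\tilde R_\alpha)$, and agrees with $S$ on the generators. Because $S$ is the unique anti-endomorphism satisfying $S*\mathrm{id}=\eta\epsilon$, and any anti-endomorphism is determined by its values on $\tilde R_{(n)}$, the two coincide. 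The anti-multiplicativity of $T$ reduces, after applying $\omega$ (which reverses and transposes ribbon shapes, hence sends $\alpha\lhd\gamma$, $\alpha\cdot\gamma$, $\alpha\rhd\gamma$ to $\omega(\gamma)\lhd\omega(\alpha)$, $\omega(\gamma)\cdot\omega(\alpha)$, $\omega(\gamma)\rhd\omega(\alpha)$ in some order), to a purely combinatorial statement about merging: the mergings of the three ribbon shapes glued from $\omega(\alpha)$ and $\omega(\gamma)$ biject with pairs (merging of $\omega(\alpha)$, merging of $\omega(\gamma)$), where the three "gluing types" account for whether the boundary boxes of the two pieces get merged to each other or not. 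This is a local bijection at the single shared edge, and I expect it to fall out of careful bookkeeping.

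The main obstacle will be the combinatorial identity verifying anti-multiplicativity of $T$ — specifically, showing that the merging counts $M_{\omega(\alpha),\beta}$ interact correctly with the three-term ribbon product. One has to be careful that merging is defined by which set of edges is contracted (order-independent), and that when two ribbon pieces are concatenated ($\lhd$), overlapped by one box ($\cdot$), or joined by adding a box ($\rhd$), the boxes available for merging in the glued shape decompose as the boxes of the two pieces plus possibly one new adjacency across the seam; translating this through $\omega$ and matching it termwise with $\tilde R_{\omega(\gamma)\lhd\omega(\alpha)}+\tilde R_{\omega(\gamma)\cdot\omega(\alpha)}+\tilde R_{\omega(\gamma)\rhd\omega(\alpha)}$ requires checking several cases according to how the seam box is or is not merged. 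Once this local bijection is established, the sign $(-1)^{|\alpha|+|\gamma|}=(-1)^{|\alpha|}(-1)^{|\gamma|}$ is automatic, and the proof concludes by uniqueness of the antipode. As a sanity check I would verify the formula on small cases such as $\alpha=(1,1)$ and $\alpha=(2)$ against the recursion, and confirm consistency with the known antipode of $\NSym$ by taking top-degree terms (where $M_{\omega(\alpha),\beta}=\delta_{\omega(\alpha),\beta}$).
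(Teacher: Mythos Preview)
Your single-row base case via the coproduct recursion is exactly what the paper does. For general $\alpha$ the paper takes a more economical route than yours: rather than defining a candidate $T$ and proving it anti-multiplicative on \emph{all} pairs, the paper uses the identity
\[
\tilde R_{(\beta_1,\dots,\beta_k)} \;=\; \tilde R_{(\beta_1,\dots,\beta_{k-1})}\bullet \tilde R_{(\beta_k)} \;-\; \tilde R_{(\beta_1,\dots,\beta_{k-1}+\beta_k)} \;-\; \tilde R_{(\beta_1,\dots,\beta_{k-1}+\beta_k-1)}
\]
(an immediate consequence of the product rule), applies $S$ directly---which is already known to be an anti-endomorphism by Proposition~\ref{antiendomorphism}---and inducts on the number of parts. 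The merging combinatorics therefore only have to be verified in the special case where one factor is a single row $\tilde R_{(\beta_k)}$, and the paper dispatches this with a short schematic argument comparing mergings of $\omega(\beta)$ with those of the three right-hand compositions. Your plan is valid in principle but demands more: you must establish the anti-multiplicativity identity for arbitrary pairs $(\alpha,\gamma)$, and since $|\alpha\cdot\gamma|=|\alpha|+|\gamma|-1$ the term $T(\tilde R_{\alpha\cdot\gamma})$ enters with the opposite sign, so what you describe as a ``local bijection at the seam'' is in fact a signed identity with a genuine cancellation, not a pure bijection---this is exactly the subtlety the paper's three-picture schematic is handling in the restricted setting. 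Finally, your assertion that $\MNSym$ is free on the $\tilde R_{(n)}$ is neither proved in the paper nor needed for your argument: all you actually use is that the $\tilde R_{(n)}$ generate, which follows from the displayed identity above.
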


Note that only finitely many terms will be nonzero because $M_{\omega(\alpha),\beta}=0$ if $|\beta|>|\alpha|$.

\begin{proof}
 We prove this by induction on the number of parts of the composition $\alpha$. 

We compute directly that $$0=S(\tilde R_{(1)})=S(\tilde R_{(1)})\cdot 1 + S(1)\cdot \tilde R_{(1)}=S(\tilde R_{(1)})+\tilde R_{(1)}$$ by Proposition \ref{antiendomorphism}, 
so $S(\tilde R_{(1)})=-\tilde R_{(1)}$.

Now assume that $S(\tilde R_{(k)})=(-1)^k \displaystyle\sum_{i=0}^{k-1} \binom{k-1}{i} \tilde R_{1^{i+1}}$ for all $k<n$. 
Then, using Example \ref{MNSymcoproductexample} and Definition \ref{convolutionalinverse}, we see that
\begin{eqnarray}
0 &=& \tilde R_{(n)}+S(\tilde R_{(n)}) + \displaystyle\sum_{i=1}^{n-1} S(\tilde R_{(i)})\bullet \tilde R_{(n-i)} \nonumber \\
&=& \tilde R_{(n)}+S(\tilde R_{(n)})+ \displaystyle \sum_{i=1}^{n-1}\left((-1)^i \displaystyle\sum_{j=0}^{i-1} \binom{i-1}{j} \tilde R_{(1^{j+1})}\right)\bullet \tilde R_{(n-i)} \nonumber \\
&=& \tilde R_{(n)}+S(\tilde R_{(n)})+ \displaystyle \sum_{i=1}^{n-1}(-1)^i \displaystyle\sum_{j=0}^{i-1} \binom{i-1}{j} (\tilde R_{(1^{j+1},n-i)}+\tilde R_{(1^j,n-i+1)}+\tilde R_{(1^j, n-i)}). \nonumber 
\end{eqnarray}

There are five types of terms that appear in this sum.
\begin{enumerate}
 \item $\tilde R_{(1^{n-m},m)}$. The coefficient of this term is 
 
 $$(-1)^{n-m}\binom{n-m-1}{n-m-1}+(-1)^{n-m+1}\binom{n-m}{n-m}=0.$$
 
\item $\tilde R_{(m)}$, where $1<m<n$. The coefficient of this term is 

$$(-1)^{n-m+1}\binom{n-m}{0}+(-1)^{n-m}\binom{n-m-1}{0}=0.$$

\item $\tilde R_{(1^s,m)}$, where $s<n-m$, and $m>1$. The coefficient of this term 

$$(-1)^{n-m}\binom{n-m-1}{s-1}+(-1)^{n-m+1}\binom{n-m}{s}+(-1)^{n-m}\binom{n-m-1}{s}=0.$$

\item $\tilde R_{(1^k)}$, where $k\leq n$. The coefficient of this term is $$(-1)^{n-1}\binom{n-2}{k-2}+(-1)^{n-1}\binom{n-2}{k-1} = (-1)^{n-1}\binom{n-1}{k-1}.$$
\item $\tilde R_{(n)}$. The coefficient of this term is $$(-1)^1\binom{0}{0}=-1.$$
\end{enumerate}

Thus 
\[
 0=S(\tilde R_{(n)})+(-1)^{n-1}\displaystyle\sum_{s=1}^n\binom{n-1}{s-1} \tilde R_{1^s},
\]
and so 
\[
 S(\tilde R_{(n)})=(-1)^n\displaystyle\sum_{s=0}^{n-1}\binom{n-1}{s}\tilde R_{(1^{s+1})}. 
\]

It is clear that there are $\binom{n-1}{s}$ mergings of $\omega(\alpha)=(1^n)$ that result in shape $(1^{s+1})$ since we are choosing $s$ of the $n-1$ border edges to remain intact.

Now suppose $S(\tilde R_\alpha) = (-1)^n \displaystyle\sum_\beta M_{\omega(\alpha),\beta} \tilde R_\beta$ holds for all compositions $\alpha$ with up to $k-1$ parts, and let $\beta=(\beta_1,\beta_2,\ldots,\beta_k)$
be a composition with $k$ parts. We know that 

\[
\tilde R_{\beta}=\tilde R_{(\beta_1,\beta_2,\ldots,\beta_{k-2},\beta_{k-1})}\bullet \tilde R_{(\beta_k)}-\tilde R_{(\beta_1,\beta_2,\ldots,\beta_{k-2},\beta_{k-1}+\beta_k)}-R_{(\beta_1,\beta_2,\ldots,\beta_{k-2},\beta_{k-1}+\beta_k-1)},
\]

and so 
\begin{eqnarray*}
S(\tilde R_{\beta})&=&S(\tilde R_{(\beta_k)})\bullet S(\tilde R_{(\beta_1,\beta_2,\ldots,\beta_{k-2},\beta_{k-1})}) -S(\tilde R_{(\beta_1,\beta_2,\ldots,\beta_{k-2},\beta_{k-1}+\beta_k)})\\
& & -S(\tilde R_{(\beta_1,\beta_2,\ldots,\beta_{k-2},\beta_{k-1}+\beta_k-1)}).
 \end{eqnarray*}

In Figure~\ref{fig:NSymantipodeschematic}, let the thin rectangle represent all mergings of $\omega(\beta_k)$ and the square represent all mergings of $\omega(\beta_1,\ldots,\beta_{k-1})$.

\begin{figure}[h!]
\begin{center}
\includegraphics[width=4.5in]{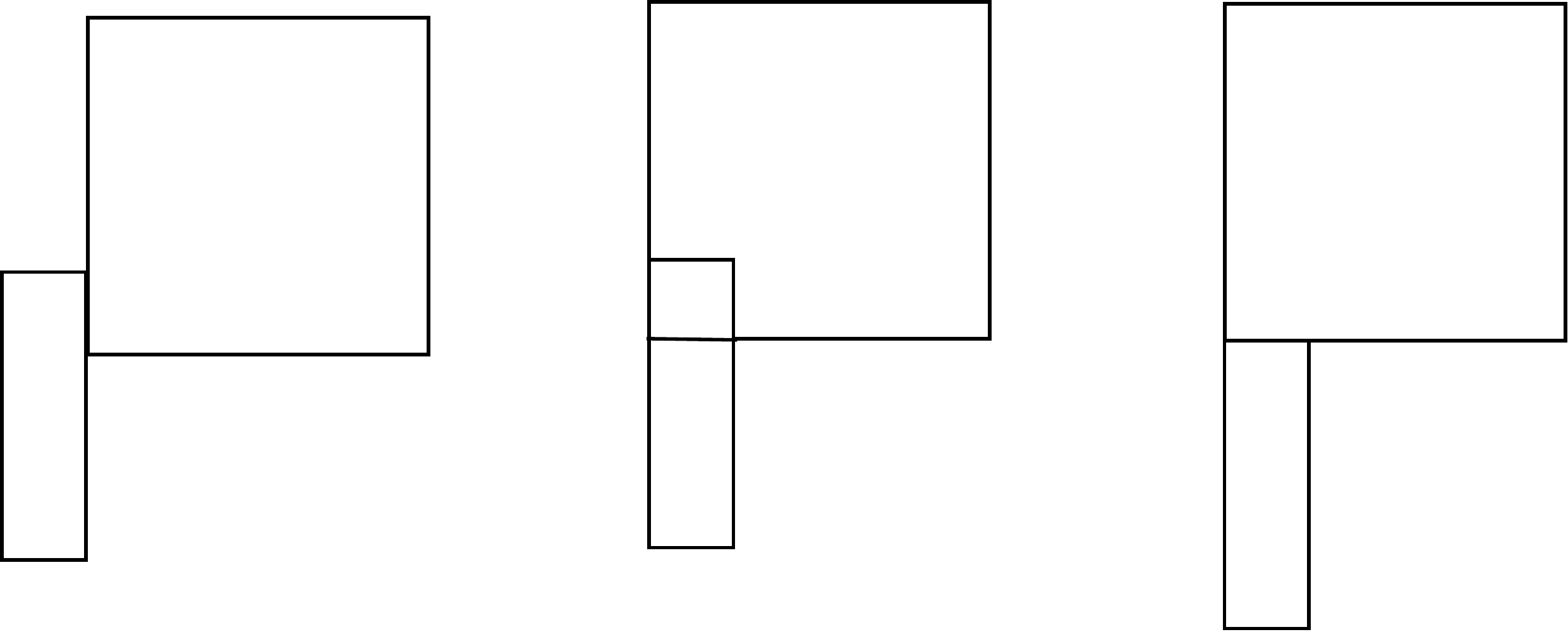}
\end{center}
\begin{center}
(1) \hspace{1.4in} (2) \hspace {1.3in} (3)
\end{center}
\caption{$\NSym$ antipode merging schematic}
\label{fig:NSymantipodeschematic}
\end{figure}

Then the image labeled (1) represents all mergings obtained by adding the last part of a merging of $\omega(\beta_k)$ 
to the first part of a merging of $\omega(\beta_1,\ldots, \beta_{k-1})$. The image labeled (2) represents all mergings obtained by merging the topmost box
in a merging of $\omega(\beta_k)$ with the bottom leftmost box of a merging of $\omega(\beta_1,\ldots,\beta_k)$. These two mergings with multiplicities are exactly the shapes 
we want in $S(\tilde R_{\beta})$.

The image labeled (3) represents all mergings obtained by concatenating a merging of $\omega(\beta_k)$ with a merging of $\omega(\beta_1,\ldots,\beta_{k-1})$.
We do not want these mergings to appear in $S(\tilde R_\beta)$ because it is impossible for boxes that are side-by-side in $\omega(\beta)$ to be stacked one on top of the other
in a merging of $\omega(\beta)$.

We use the fact that $S(\tilde R_{\beta_k})\bullet S(\tilde R_{(\beta_1,\ldots,\beta_{(k-1)})})$ results in all mergings of type (1), (2), and (3), 
$S(\tilde R_{(\beta_1,\ldots, \beta_{k-1}+\beta_k)})$ gives all mergings of type (2) and (3), and $S(\tilde R_{(\beta_1,\ldots, \beta_{k-1}+\beta_k-1)})$
contains exactly those mergings of type (2). The parity of the sizes of the compositions provides the necessary cancellation and
leaves us with all mergings of type (1) and (2), as desired. 
\end{proof}

\begin{example}
Consider $S(\tilde R_{(1,2)})=S(\tilde R_{(1)})\bullet S(\tilde R_{(1,1)})-S(\tilde R_{(1,1,1)})-S(\tilde R_{(1,1)})$. The image below shows all of the mergings in $S(\tilde R_{(1)})\bullet S(\tilde R_{(1,1)})$ in the first line with the proper sign, subtracts mergings of $S(\tilde R_{(1,1,1)})$
in the second line, and subtracts mergings of $S(\tilde R_{(1,1)})$ in the third line. The black boxes represent mergings of $\omega(1)=(1)$, the white boxes represent mergings of $\omega(1,1)=(2)$, and the gray boxes represent boxes where the two shapes have merged.
 \begin{center}
$ - \begin{ytableau} *(black) & & \end{ytableau}-
 \begin{ytableau} $ $ & \\ *(black) \end{ytableau}-
 \begin{ytableau}*(gray) & \end{ytableau}-
 \begin{ytableau}*(black) & \end{ytableau}-
 \begin{ytableau} $ $ \\*(black)\end{ytableau} - \begin{ytableau}*(gray) \end{ytableau}$ \\
 \vspace{.1in}
 $+\begin{ytableau} *(black) & & \end{ytableau}+\begin{ytableau} *(black) & \end{ytableau} + \begin{ytableau} *(gray) & \end{ytableau} + \begin{ytableau} *(gray) \end{ytableau}$\\
  \vspace{.1in}
 $-\begin{ytableau}*(gray) & \end{ytableau} - \begin{ytableau} *(gray) \end{ytableau}$\\
  \vspace{.1in}
 $=- \begin{ytableau} $ $ & \\ *(black) \end{ytableau}- \begin{ytableau} $ $ \\ *(black)\end{ytableau}-\begin{ytableau} *(gray) & \end{ytableau} - \begin{ytableau} *(gray)\end{ytableau}$
 \end{center}

\end{example}

\subsection{Antipode map for $\mQSym$}

We know from \cite[Theorem 8.4]{lam2007combinatorial} that the bases $\{\tilde L_\alpha \}$ and $\{\tilde R_\alpha\}$ satisfy the criteria in 
Lemma \ref{lem:pairingantipode}. Extending the definition below by continuity gives the following antipode formula in $\mQSym$.

\begin{theorem}\label{thm:mQSymAntipode} Let $\alpha$ be a composition of $n$. Then 
 $$S(\tilde L_\alpha) = \displaystyle\sum_\beta (-1)^{|\beta|} M_{\beta,\omega(\alpha)}\tilde L_{\beta},$$ where the sum is over all compositions $\beta$.
\end{theorem}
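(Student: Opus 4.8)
The plan is to derive the $\mQSym$ antipode formula from the $\MNSym$ antipode formula (Theorem~\ref{MNSymAntipode}) using the duality machinery of Lemma~\ref{lem:pairingantipode}. By \cite[Theorem 8.4]{lam2007combinatorial}, the bases $\{\tilde L_\alpha\}$ and $\{\tilde R_\alpha\}$ satisfy the hypotheses of Lemma~\ref{lem:pairingantipode}: they are dual under the natural pairing, and the product structure constants for one basis are the coproduct structure constants for the other (and vice versa). So I would first record precisely which basis plays the role of $\{A_\lambda\}$ and which plays the role of $\{B_\mu\}$ in Lemma~\ref{lem:pairingantipode}: we take $A_\alpha = \tilde R_\alpha$ in $\MNSym$ and $B_\alpha = \tilde L_\alpha$ in $\mQSym$, so that the product of the $\tilde R$'s matches the coproduct of the $\tilde L$'s and conversely.

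Next I would read off the coefficients $e_{\lambda,\mu}$ from Theorem~\ref{MNSymAntipode}. That theorem states $S(\tilde R_\alpha) = (-1)^n \sum_\beta M_{\omega(\alpha),\beta}\tilde R_\beta$ where $n = |\alpha|$; since every $\beta$ occurring has $|\beta| \le |\alpha| = n$, but in fact we can rewrite the sign: a merging of $\omega(\alpha)$ to $\beta$ merges $n - |\beta|$ pairs, so $(-1)^n M_{\omega(\alpha),\beta} = (-1)^{|\beta|}(-1)^{n - |\beta|}M_{\omega(\alpha),\beta}$; however the cleaner bookkeeping is just to set $e_{\alpha,\beta} := (-1)^{|\alpha|} M_{\omega(\alpha),\beta}$ in the notation of Lemma~\ref{lem:pairingantipode}, where $\alpha$ is the ``row index'' $\lambda$ and $\beta$ the ``column index'' $\mu$. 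Then Lemma~\ref{lem:pairingantipode} immediately gives that the operator on $\mQSym$ defined by $S(\tilde L_\beta) = \sum_\alpha e_{\alpha,\beta}\tilde L_\alpha = \sum_\alpha (-1)^{|\alpha|}M_{\omega(\alpha),\beta}\tilde L_\alpha$ satisfies the antipode functional equation $\sum f^\lambda_{\mu,\nu} S(\tilde L_\mu)\tilde L_\nu = 0$, i.e. $\mathrm{id}*S$ annihilates everything of positive degree; combined with $S(1) = 1$ this is exactly the defining property of the antipode.

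There is, however, one step of genuine content that is not purely formal: matching my indexing against the statement. The theorem as stated sums $S(\tilde L_\alpha) = \sum_\beta (-1)^{|\beta|} M_{\beta,\omega(\alpha)}\tilde L_\beta$, whereas the direct transport from Lemma~\ref{lem:pairingantipode} yields $S(\tilde L_\alpha) = \sum_\beta (-1)^{|\beta|} M_{\omega(\beta),\alpha}\tilde L_\beta$ (after relabeling summation indices). So the crux is the combinatorial identity $M_{\beta,\omega(\alpha)} = M_{\omega(\beta),\alpha}$ for all compositions $\alpha,\beta$ of the same integer. I would prove this by observing that $\omega$ is an involution on ribbon shapes (transpose of the skew diagram) and that $\omega$ carries the merging operation to itself in a sign-compatible, bijective way: merging two horizontally-adjacent boxes of a ribbon corresponds under transposition to merging two vertically-adjacent boxes, and every merging of a ribbon is a choice of a ``matching'' on its border edges with no two chosen edges sharing a box. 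Transposition is a bijection on border edges, so it induces a bijection between mergings of $\beta$ to $\omega(\alpha)$ and mergings of $\omega(\beta)$ to $\omega(\omega(\alpha)) = \alpha$, giving $M_{\beta,\omega(\alpha)} = M_{\omega(\beta),\alpha}$.

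I expect the transport step via Lemma~\ref{lem:pairingantipode} to be essentially automatic, and the continuity remark (extending from the graded/filtered pieces to arbitrary linear combinations, so that $S$ is well-defined as a continuous map on all of $\mQSym$) to be routine given the filtration discussion in Section~\ref{mQSym}. The main obstacle, then, is establishing the symmetry $M_{\beta,\omega(\alpha)} = M_{\omega(\beta),\alpha}$ cleanly; the risk is purely one of getting the transposition/merging correspondence exactly right (e.g.\ making sure the bijection on border edges respects the ``no shared box'' constraint and tracks the number of merged pairs correctly so the signs agree). Once that identity is in hand, the proof is: invoke \cite[Theorem 8.4]{lam2007combinatorial} to check the hypotheses of Lemma~\ref{lem:pairingantipode}, feed in the formula from Theorem~\ref{MNSymAntipode}, apply the lemma, reindex, apply the symmetry of $M$, and extend by continuity.
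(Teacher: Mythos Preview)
Your proposal is correct and follows essentially the same route as the paper: invoke the duality of $\{\tilde L_\alpha\}$ and $\{\tilde R_\alpha\}$ from \cite[Theorem~8.4]{lam2007combinatorial}, apply Lemma~\ref{lem:pairingantipode} to transport the antipode formula of Theorem~\ref{MNSymAntipode}, and extend by continuity. The paper leaves the reindexing identity $M_{\omega(\beta),\alpha}=M_{\beta,\omega(\alpha)}$ implicit, whereas you spell it out; your justification via transposition of ribbons is correct (though your parenthetical describing a merging as a ``matching with no two chosen edges sharing a box'' is inaccurate, since mergings may collapse several consecutive boxes into one---the bijection under $\omega$ still goes through because transposition is an involution on ribbons preserving edge-adjacency).
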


Note that while $S(\tilde R_\alpha)$ is a finite sum of Multi-noncommutative ribbon functions for any $\alpha$, 
$S(\tilde L_\alpha)$ is an infinite sum of multi-fundamental quasisymmetric functions for any $\alpha$. Since any arbitrary linear combination 
of multi-fundamental quasisymmetric functions is in $\mQSym$, this is an admissible antipode formula.

\section{A new basis for $\mQSym$}\label{sec:Lhat}
\subsection{$(P,\theta)$-multiset-valued partitions}
To create a new basis for $\mQSym$, which will be useful in finding antipode formulas, 
we extend the definition of a $(P,\theta)$-set-valued partition to what we call a $(P,\theta)$\textit{-multiset-valued partition} in the natural way.
In a $(P,\theta)$-multiset-valued partition $\sigma$, we allow $\sigma(p)$ for $p\in P$ to be a finite multiset of positive integers, keeping all other
definitions the same. An example of a $(P,\theta)$-multiset-valued partition is shown in Figure~\ref{fig:mvaluedpartition}. 

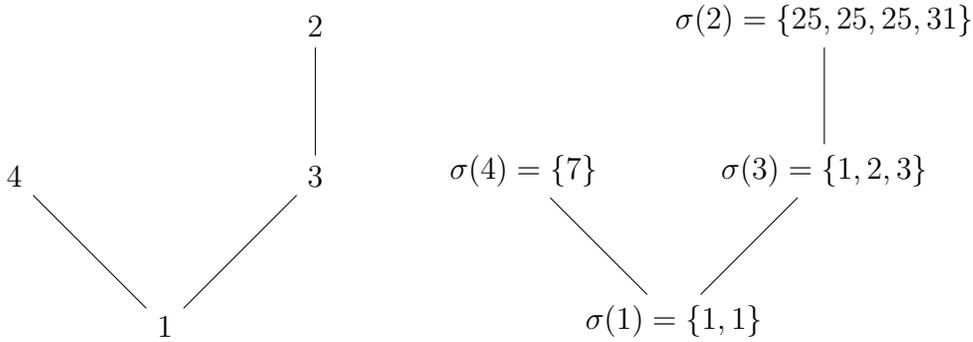
\begin{figure}[h!]
\begin{center}
\begin{tikzpicture}
  \node (a) at (2,2) {$2$};
  \node (b) at (-2,0) {$4$};
  \node (c) at (2,0) {$3$};
  \node (d) at (0,-2) {$1$};
  \draw (d) -- (b)
(d) -- (c) -- (a);
\end{tikzpicture}\hspace{.5in}
\begin{tikzpicture}
  \node (a) at (2,2) {$\sigma(2)=\{25,25,25,31\}$};
  \node (b) at (-2,0) {$\sigma(4)=\{7\}$};
  \node (c) at (2,0) {$\sigma(3)=\{1,2,3\}$};
  \node (d) at (0,-2) {$\sigma(1)=\{1,1\}$};
  \draw (d) -- (b)
(d) -- (c) -- (a);
\end{tikzpicture}
\end{center}
\caption{A $(P,\theta)$-multiset-valued partition}
\label{fig:mvaluedpartition}
\end{figure}

Now define $\tA(P,\theta)$ to be the set of all $(P,\theta)$-multiset-valued partitions. For each positive integer $i$, let 
$\sigma^{-1}(i)$ be the multiset $\{x \in P \mid i=\sigma(x)\}$. In other words, $\sigma^{-1}(i)$ lists $p\in P$ once for each occurence of $i$ in $\sigma(p)$. In the example in Figure~\ref{fig:mvaluedpartition}, $\sigma^{-1}(1)=\{1,1,3\}$. Now 
define $\tK _{P,\theta}\in\mathbb{Z}[[x_1,x_2,\ldots ]]$ by $$\tK_{P,\theta}=\displaystyle
\sum_{\sigma\in \tA(P,\theta)}{x_1^{\#\sigma^{-1}(1)}x_2^{\#\sigma^{-1}(2)}\cdots}.$$

Using this multiset analogue of our definitions, we define $$\L_\alpha=\tK_{(P,w)}=\displaystyle
\sum_{\sigma\in \tA(P,w)}{x_1^{\#\sigma^{-1}(1)}x_2^{\#\sigma^{-1}(2)}\cdots},$$ 
where $P=p_1<\ldots<p_k$ is a finite linear order and $w\in \mathfrak{S}_k$ with $\mathcal{C}(w)=\alpha$.

\subsection{Properties}

Recall the definition of $T(D,E)$ from Section \ref{mQSym}. For the proof of the following theorem, it may be useful to review the $(P,w)$-partition definition of the fundamental quasisymmetric function $L_\alpha$: 

\[L_\alpha=\displaystyle
\sum_{\sigma\in \mathcal{A}(P,w)}{x_1^{\#\sigma^{-1}(1)}x_2^{\#\sigma^{-1}(2)}\cdots},
\]
where $\mathcal{C}(w)=\alpha$ and $\mathcal{A}(P,w)$ is the set of all $(P,w)$-partitions.
We refer the reader to \cite{lam2007combinatorial} for further reading as we use the same notation. The following proof is almost identical to the proof of the analogous result for $\tilde{L}_\alpha$ given for Theorem 5.12 in \cite{lam2007combinatorial}. 

\begin{theorem}\label{expandLh} Let $\alpha$ be a composition of $n$ and $D(\omega(\alpha))=Des(\omega(\alpha))$.
We have that 
$$\L_\alpha ^{(i)} = \displaystyle\sum_{E \subset [n+i - 1]} |T(D(\omega(\alpha)),E)| L_{\omega(\mathcal{C}(E))}.$$
\end{theorem}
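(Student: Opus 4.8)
The plan is to mimic the proof of Theorem 5.12 in \cite{lam2007combinatorial} (the analogous statement for $\tilde L_\alpha$), replacing set-valued partitions by multiset-valued partitions throughout. The key combinatorial idea is to decompose a $(P,w)$-multiset-valued partition on the chain $P = p_1 < \cdots < p_k$ (with $\mathcal{C}(w)=\alpha$) into an underlying honest $(P',w')$-partition together with a record of ``how much the multiplicities were inflated.'' Concretely, given $\sigma \in \tA(P,w)$ that uses distinct values $v_1 < v_2 < \cdots < v_m$ across all the multisets $\sigma(p_i)$, I would first extract the ``support chain'' — reading off which poset element each occurrence of each $v_j$ sits in, in weakly increasing order of $j$ and respecting the chain order inside each $v_j$-block — to obtain a word, hence (after standardization) a larger chain $P^{(i)}$ of size $n+i$ with a labeling whose descent composition is $\omega(\mathcal{C}(E))$ for an appropriate $E \subseteq [n+i-1]$. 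The fact that we are recording \emph{multiset} data (repeats allowed within a single $\sigma(p)$, and the defining inequalities $\sigma(s) \le \sigma(t)$ or $\sigma(s) < \sigma(t)$ become, after this unrolling, the strict/weak pattern governed by $\omega$) is exactly what turns the ordinary descent set $D = \Des(\alpha)$ into $D(\omega(\alpha)) = \Des(\omega(\alpha))$ in the statement.

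Here is the sequence of steps I would carry out. \textbf{Step 1:} Fix $i \ge 0$ and set up the bijection between (a) $(P,w)$-multiset-valued partitions $\sigma$ of total weight $n+i$ (i.e. $\sum_p \#\sigma(p) = n+i$) landing in a given monomial $x_{a_1}x_{a_2}\cdots$, and (b) pairs consisting of a subset $E \subseteq [n+i-1]$, an element $t \in T(D(\omega(\alpha)),E)$, and an honest $(P^{(i)}, w_E)$-partition of the appropriate weight, where $P^{(i)}$ is a chain of size $n+i$ and $\mathcal{C}(w_E) = \omega(\mathcal{C}(E))$. \textbf{Step 2:} Check that the bijection is weight-preserving, so that summing the monomials on side (b) reproduces exactly $\sum_{E} |T(D(\omega(\alpha)),E)|\, L_{\omega(\mathcal{C}(E))}$ in degree $n+i$, which is the claimed formula for $\L_\alpha^{(i)}$. \textbf{Step 3:} Verify the role of $\omega$: when one ``linearizes'' a multiset-valued partition on the chain, a strict inequality between consecutive poset elements becomes a forced non-descent in the linearization while a weak inequality becomes an optional descent, which is precisely the transpose-of-ribbon operation $\alpha \mapsto \omega(\alpha)$; this is why $D(\omega(\alpha))$ rather than $D(\alpha)$ appears, and why the $i$-extensions are applied to $D(\omega(\alpha))$. \textbf{Step 4:} Since everything earlier in the paper that this relies on — Theorem~\ref{multiexpandK}, the definition of $T(D,E)$, and the $(P,w)$-partition description of $L_\alpha$ — is already available, assemble these into the statement.

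The main obstacle will be Step 3, i.e. pinning down exactly where $\omega$ enters and getting the strict-versus-weak bookkeeping right. In the set-valued case (Theorem 5.12) one does not see $\omega$ at all — the inequalities $a < b$ on subsets (meaning $\max a < \min b$) are ``strict on both sides,'' so linearizing a \emph{set}-valued partition produces descents in a symmetric way and $D=\Des(\alpha)$ survives untouched. For \emph{multiset}-valued partitions the asymmetry is genuine: repeated entries within one $\sigma(p)$ force runs of equal $x$-exponents, and the covering-relation conditions $\sigma(s)\le\sigma(t)$ versus $\sigma(s)<\sigma(t)$ unroll differently, which is the source of the transposition. I would handle this by working out one or two small explicit examples (e.g. $\alpha=(1,1)$ and $\alpha=(2)$, which are $\omega$-dual) to confirm that $\L_{(1,1)}^{(i)}$ and $\L_{(2)}^{(i)}$ come out swapped relative to $\tilde L_{(1,1)}^{(i)}$ and $\tilde L_{(2)}^{(i)}$, and then lift the observed pattern to the general bijection. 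The remaining steps (1, 2, 4) are essentially transcriptions of the argument in \cite{lam2007combinatorial} with ``set'' replaced by ``multiset'' and should be routine once Step 3 is secure; accordingly I would present the proof as ``the following proof is almost identical to that of Theorem 5.12 in \cite{lam2007combinatorial}'' and focus the written details on the $\omega$-twist.
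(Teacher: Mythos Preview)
Your proposal is correct and takes essentially the same approach as the paper: both unroll a multiset-valued $(C,w)$-partition of size $n+i$ into an honest $(C',w')$-partition on a chain of length $n+i$, with the $i$-extension $t \in T(D(\omega(\alpha)),E)$ recording the multiset sizes $|\sigma(c_j)|$. The paper's write-up is in fact even terser than you anticipate---the $\omega$ is absorbed into a single index reversal $|\sigma(c_j)| = t(n-(j-1)) - t(n-j)$ rather than argued separately---but your Step~3 intuition about the strict/weak asymmetry is exactly the reason that reversal appears.
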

\begin{proof}
 Let $w=w_\alpha$ and consider the subset $\tA_i(C,w)\subset \tA(C,w)$ consisting of multiset-valued $(C,w)$-partitions $\sigma$ of size
$|\sigma|=n+i$, where $C=(c_1<c_2<\cdots<c_n)$ is a chain. We must show that the generating function of $\tA_i(C,w)$ is equal to $$\displaystyle\sum_{E \subset [n+i - 1]} |T(D(\omega(\alpha)),E)| L_{\omega(\mathcal{C}(E))}.$$  Indeed, for each pair
$t\in T(D(\omega(\alpha)),E)$ for some $E$, the function $L_{\omega(\mathcal{C}(E))}$ is the generating function  of all $\sigma\in \tA_i(C,w)$ satisfying $|\sigma(c_j)|=t(n-(j-1))-t(n-j))$, where
$t(0)=0$ and $t(n)=n+i$. Letting $C'=c'_1<c'_2,\ldots , c'_{n+i}$ be a chain with $n+i$ elements, we obtain a $(\mathcal{C}', \omega(\mathcal{C}(E)))$-partition 
$\sigma ' \in \mathcal{A}(C',\omega(\mathcal{C}(E)))$ by assigning the elements of $\sigma(c_i)$ in increasing order to
$c'_{t(i-1)+1},\ldots ,c'_{t(i)}$.
\end{proof}

\begin{example}
 Letting $\alpha =(1,2,1,2)$, we see that $$\L _\alpha^{(1)} = L_{(2,2,1,2)}+2L_{(1,3,1,2)}+L_{(1,2,2,2)}+2L_{(1,2,1,3)}.$$
In this example, the coefficient of $L_{(1,3,1,2)}$ is $2$ because 
\begin{eqnarray*}
|T(D(\omega(1,2,1,2)), \{1,4,5\}\subset [n+1-1=6])|&=&|T(D(1,3,2),\{1,4,5\})| \\ 
&=&|T(\{2,4\},\{1,4,5\})| \\
&=&2.
\end{eqnarray*}
\end{example}

Given the basis of multi-quasisymmetric functions, $\{\tilde L_\alpha\}$, the set $\{\L_\alpha\}$ is natural to consider because of the next proposition. We remind the reader that $\omega(L_\alpha)=L_{\omega(\alpha)}$ in $\QSym$.

\begin{proposition}\label{Lhatisabasis}
 We have $\omega(\tilde L_\alpha) = \L_{\omega(\alpha)}$, and the set of $\{\L_\alpha\}$ forms a continuous basis for $\mQSym$.
\end{proposition}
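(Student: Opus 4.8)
The plan is to establish the two claims of Proposition~\ref{Lhatisabasis} in sequence: first the identity $\omega(\tilde L_\alpha) = \L_{\omega(\alpha)}$, and then use it to deduce that $\{\L_\alpha\}$ is a continuous basis. For the identity, I would work degree by degree. Recall that $\tilde L_\alpha$ and $\L_\alpha$ are infinite sums of unbounded degree, so $\omega$ must be applied degreewise; since each homogeneous component lies in the finite-dimensional space of degree-$d$ quasisymmetric functions, the classical involution $\omega$ on $\QSym$ acts componentwise and the statement makes sense. Writing $\tilde L_\alpha = \sum_{i\ge 0} L_\alpha^{(i)}$, we have by \cite[Theorem 5.12]{lam2007combinatorial} that $L_\alpha^{(i)} = \sum_{E\subset[n+i-1]} |T(\Des(\alpha),E)|\, L_{\mathcal{C}(E)}$. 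Applying $\omega$ and using $\omega(L_\gamma) = L_{\omega(\gamma)}$ gives
\[
\omega(L_\alpha^{(i)}) = \sum_{E\subset[n+i-1]} |T(\Des(\alpha),E)|\, L_{\omega(\mathcal{C}(E))}.
\]
On the other hand, Theorem~\ref{expandLh} applied to the composition $\omega(\alpha)$ — noting that $\omega$ is an involution on compositions, so $D(\omega(\omega(\alpha))) = D(\alpha) = \Des(\alpha)$ — yields
\[
\L_{\omega(\alpha)}^{(i)} = \sum_{E\subset[n+i-1]} |T(\Des(\alpha),E)|\, L_{\omega(\mathcal{C}(E))},
\]
which is exactly the same expression. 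Summing over $i$ gives $\omega(\tilde L_\alpha) = \L_{\omega(\alpha)}$.

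For the basis claim, the key input is that $\{\tilde L_\alpha\}$ is a continuous basis for $\mQSym$ (established in \cite{lam2007combinatorial}, and used throughout Section~\ref{mQSym}), together with the fact that $\omega$ is a continuous linear involution on $\mQSym$. Since $\omega$ is invertible (being an involution) and bijective on the indexing set of compositions via $\alpha \mapsto \omega(\alpha)$, it carries the continuous basis $\{\tilde L_\alpha\}$ to the set $\{\omega(\tilde L_\alpha)\} = \{\L_{\omega(\alpha)}\} = \{\L_\beta\}$, which is therefore itself a continuous basis. The only point requiring a sentence of justification is that $\omega$ is indeed a well-defined continuous automorphism of $\mQSym$: it is defined degreewise by the classical $\omega$ on each finite-dimensional homogeneous piece, so it respects arbitrary (infinite) linear combinations and is its own inverse.

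The main obstacle I anticipate is a bookkeeping one rather than a conceptual one: making sure the descent-set conventions line up so that the two expansions genuinely coincide. Specifically, Theorem~\ref{expandLh} is stated with $D(\omega(\alpha)) = \Des(\omega(\alpha))$, so when I substitute $\omega(\alpha)$ for $\alpha$ I must carefully use the involutivity $\omega\circ\omega = \mathrm{id}$ on compositions to get $D(\omega(\omega(\alpha))) = \Des(\alpha)$, and I should double-check that the ``$\omega(\mathcal{C}(E))$'' appearing in Theorem~\ref{expandLh} matches the ``$\omega(\mathcal{C}(E))$'' produced by applying $\omega$ to the $\tilde L_\alpha$ expansion — which it does, since both come from the same set $E$. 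A secondary subtlety is confirming that applying $\omega$ term-by-term to the infinite sum $\sum_i L_\alpha^{(i)}$ is legitimate; this is fine because $\omega$ preserves degree, so no reorganization of the sum occurs. Once these conventions are pinned down, both parts follow immediately.
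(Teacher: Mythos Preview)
Your proposal is correct and follows essentially the same approach as the paper: expand $\tilde L_\alpha$ in fundamental quasisymmetric functions via \cite[Theorem~5.12]{lam2007combinatorial}, apply $\omega$ termwise, and recognize the result as the expansion of $\L_{\omega(\alpha)}$ from Theorem~\ref{expandLh} after substituting $\omega(\alpha)$ for $\alpha$. You are in fact more thorough than the paper, which leaves the continuous-basis claim implicit; your argument that $\omega$ is a degree-preserving continuous involution carrying one continuous basis to another is exactly the right justification.
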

 
\begin{proof}
 Using Proposition \ref{expandLh}, $$\omega(\tilde L_\alpha)=\omega\left(\displaystyle\sum_{E \subset [n+i - 1]} |T(D(\alpha),E)| L_{\mathcal{C}(E)}\right)=
\displaystyle\sum_{E \subset [n+i - 1]} |T(D(\alpha),E)| L_{\omega(\mathcal{C}(E))}=\L_{\omega(\alpha)}.$$
\end{proof}

We have an analogue of Stanley's Fundamental Theorem of P-partitions for our new basis of $\L_\alpha$'s. The proof of this result follows closely that of
Theorem \ref{multiexpandK} in this paper given in \cite{lam2007combinatorial}.

\begin{theorem}\label{expandKhat} 
Suppose poset $P$ has $n$ elements. We have $$\K_{P,\theta} = \sum_{N \geq n}
\sum_{w \in \tJ_{N(P,\theta)}} {\L_{\mathcal{C}(w)}}.$$
\end{theorem}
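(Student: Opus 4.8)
The plan is to mimic, in the multiset-valued setting, the proof of Theorem~\ref{multiexpandK} (i.e.\ \cite[Theorem 5.6]{lam2007combinatorial}), which establishes the analogous expansion of $\tilde K_{(P,\theta)}$ in the $\tilde L_\alpha$ basis. Concretely, I would construct an explicit weight-preserving bijection between $\tA(P,\theta)$ and the set of pairs $(w,\sigma')$ where $w\in\tJ_N(P,\theta)$ for some $N\ge n$ and $\sigma'\in\tA(C,w)$ for $C=(c_1<\cdots<c_N)$ a chain on $N$ elements. Summing the monomial weights $x_1^{\#\sigma^{-1}(1)}x_2^{\#\sigma^{-1}(2)}\cdots$ over each side then gives $\K_{P,\theta}=\sum_{N\ge n}\sum_{w\in\tJ_N(P,\theta)}\K_{(C,w)}=\sum_{N\ge n}\sum_{w\in\tJ_N(P,\theta)}\L_{\mathcal C(w)}$, which is exactly the claim.

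The bijection itself goes as follows. Given $\sigma\in\tA(P,\theta)$, let $v_1<v_2<\cdots<v_N$ be the distinct values appearing in $\bigcup_{p\in P}\sigma(p)$ (as a set — note $N\ge n$ since distinctness up covering relations forces at least $n$ distinct values, as no two comparable elements can be ``squeezed'' into one value in a way that violates the strict/weak inequalities... actually the lower bound $N\ge n$ needs the standard argument that a linear multi-extension records at least one index per element of $P$). From $\sigma$ build a linear multi-extension $e:P\to 2^{[N]}$ by setting $e(p)=\{\,j\in[N]\mid v_j\in\sigma(p)\,\}$; this is where I must verify the three defining conditions of a linear multi-extension — condition~(2) (each $j$ lies in exactly one $e(p)$) is the subtle one and it is precisely the step where going from \emph{set}-valued to \emph{multiset}-valued changes nothing about the \emph{set} of distinct values, so the argument is unchanged from the set-valued case; condition~(1) follows from $\sigma(s)<\sigma(t)$ or $\le$ per the poset inequalities; condition~(3) (no $e(p)$ contains both $j$ and $j+1$) uses that when $v_j$ and $v_{j+1}$ both occur in a single $\sigma(p)$ the strict inequalities between $p$ and its neighbors are never forced (this is the standard ``descent'' bookkeeping). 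Reading off labels gives $w=\theta(e^{-1}(1))\cdots\theta(e^{-1}(N))\in\tJ_N(P,\theta)$. Then define $\sigma'\in\tA(C,w)$ on the chain $C$ by $\sigma'(c_j)=$ the multiset of all copies of $v_j$ across all $\sigma(p)$; equivalently, $\sigma'(c_j)$ records, with multiplicity, how many times value $v_j$ is used. Conversely, from $(w,\sigma')$ one reconstructs $\sigma$ by distributing the multiset $\sigma'(c_j)$ of copies of ``the $j$-th value'' back to the unique poset element $e^{-1}(j)$ indicated by $w$. Checking that this is well-defined, inverse to the forward map, and weight-preserving is routine once the definitions are in place: the exponent of $x_k$ on the $\tA(P,\theta)$ side counts $\#\sigma^{-1}(k)=\sum_{p}(\text{multiplicity of }k\text{ in }\sigma(p))$, which matches $\#(\sigma')^{-1}(k)$ under the correspondence.

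The main obstacle — really the only nontrivial point — is verifying that the forward map lands in a \emph{valid} linear multi-extension, in particular condition~(3), and dually that every $(w,\sigma')$ with $w\in\tJ_N(P,\theta)$ produces a legitimate $(P,\theta)$-multiset-valued partition; this is the content hidden inside ``the proof follows closely that of Theorem~\ref{multiexpandK}.'' The subtlety is that in a multiset-valued partition a single block $\sigma(p)$ may repeat a value many times, so when we pass to the set of distinct values we are collapsing multiplicities, and we must make sure that this collapse does not destroy (or spuriously create) the strict inequalities along covering relations; the key observation is that $\sigma(s)<\sigma(t)$ depends only on $\max\sigma(s)$ and $\min\sigma(t)$, hence only on the \emph{distinct} values, so the set-valued argument transfers verbatim, and the multiplicities are simply carried along passively by $\sigma'$ on the chain. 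I would also remark (as the excerpt already does) that restricting this bijection to $\tilde{\mathcal A}(P,\theta)\subset\tA(P,\theta)$ — i.e.\ to partitions whose blocks are honest sets — recovers the set-valued bijection of \cite{lam2007combinatorial}, which is a useful sanity check and makes clear that no genuinely new idea is needed beyond bookkeeping multiplicities.
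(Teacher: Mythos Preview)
Your bijection is not the one in the paper, and as written it does not work. You propose to let $N$ be the number of \emph{distinct values} occurring among the $\sigma(p)$ and to set $e(p)=\{\,j\in[N]\mid v_j\in\sigma(p)\,\}$. But condition~(2) of a linear multi-extension demands that each $j$ lie in exactly one $e(p)$, and a single value $v_j$ can certainly appear in several $\sigma(p)$: if $p$ and $q$ are incomparable they may both contain $v_j$, and even along a covering relation $s\lessdot t$ with $\theta(s)<\theta(t)$ the weak inequality $\sigma(s)\le\sigma(t)$ allows $\max\sigma(s)=\min\sigma(t)$. (The paper's very first example in Section~\ref{sec:PthetaPartitions} has $6\in\sigma(1)\cap\sigma(3)$.) For the same reason your claimed lower bound $N\ge n$ is false: an antichain on $n$ elements with every $\sigma(p)=\{1\}$ has $N=1$. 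Condition~(3) also fails outright: if $\sigma(p)=\{1,2\}$ then $e(p)$ contains the consecutive indices for $v_1=1$ and $v_2=2$, regardless of any ``strict inequalities between $p$ and its neighbors.'' None of these obstructions is a multiset phenomenon --- your construction already breaks in the set-valued case, so the appeal to ``the set-valued argument transfers verbatim'' is misplaced.

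The paper's bijection is organized differently. Given $\sigma\in\tA(P,\theta)$, for each value $i$ one writes the multiset $\sigma^{-1}(i)\subseteq[n]$ (via $\theta$) in increasing order as a word $w_\sigma^{(i)}$, concatenates to form $w_\sigma=w_\sigma^{(1)}w_\sigma^{(2)}\cdots$, and then takes $w$ to be the unique $\m$-permutation of which $w_\sigma$ is a multiword, with associated surjection $t$. Thus $N=\ell(w)$ counts the maximal blocks of equal letters in $w_\sigma$, not the distinct values; a value shared by several poset elements contributes several letters to the same $w_\sigma^{(i)}$ and hence several positions in $w$, which is exactly how condition~(2) is rescued. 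The chain partition is then $\sigma'(c_i)=\{r^k\mid w_\sigma^{(r)}\text{ contributes }k\text{ letters to }w_\sigma|_{t^{-1}(i)}\}$, so each $\sigma'(c_i)$ is a genuine multiset of values (possibly several distinct ones), not just a repeated copy of a single $v_j$ as in your proposal. The passage from set-valued to multiset-valued is indeed only bookkeeping --- repeated letters in $w_\sigma^{(i)}$ are now allowed --- but that bookkeeping has to be grafted onto this construction, not the one you sketched.
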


\begin{proof}
We prove this result by giving an explicit weight-preserving bijection between
$\tA(P,\theta)$ and the set of pairs $(w,\sigma')$ where $ w\in
\tJ_N(P,\theta)$ and $\sigma' \in \tA(C,w)$ where $C = (c_1 <
c_2 < \cdots < c_l)$ is a chain with $l = \ell(w)$ elements.  Let
$\sigma \in \tA(P,\theta)$. For each $i$, let $\sigma^{-1}(i)$ denote
the submultiset of $[n]$ via $\theta$, and let $w^{(i)}_\sigma$ denote
the word of length $|\sigma^{-1}(i)|$ obtained by writing the
elements of $\sigma^{-1}(i)$ in increasing order.  Note that it is 
possible for $w^{(i)}_j=w^{(i)}_{j+1}$. This will occur when the 
letter $i$ appears more than once in some $\sigma(s)$ for $s\in P$. 

Let $w$ denote the
unique $\m$-permutation such that $w_\sigma:=w_\sigma^{(1)}
w_\sigma^{(2)} \cdots$ is a multiword of $w$ and $t:
\ell(w_\sigma) \to \ell(w)$ be the associated function as in
Definition~\ref{def:multiword}.  We know that $w_\sigma$ is a finite word because $\sigma^{(-1)}(r) =
\emptyset$ for sufficiently large $r$. Note that $w_\sigma$ uses all letters $[n]$.
Now define $\sigma' \in \tA(C,w)$ by

$$\sigma'(c_i) = \{r^k \mid r\text{ is a positive integer and } w_\sigma^{(r)} \text{ contributes } k \text{ letters to } w_\sigma|_{t^{-1}(i)}\}$$
where $w_\sigma|_{t^{-1}(i)}$ is the set of letters in $w_\sigma$ at
the positions in the interval $t^{-1}(i)$.  We will show that this
defines a map $\alpha:\sigma \mapsto (w,\sigma')$ with the required
properties.

First, $w$ is the $\m$-permutation associated to the linear multi-extension
$e_w$ of $P$ by $[\ell(w)]$ defined by the condition that $e_w(x)$
contains $j$ if and only if $w_j = \theta(x)$. It follows from the
definition that this $e_w: P \to 2^{[1,\ell(w)]}$ is a linear
multi-extension. To check that $\sigma'$ is a multiset-valued $(C,w)$-partition, we note that $\sigma'(c_i)\leq \sigma'(c_{i+1})$ because
the function $t$ is non-decreasing. Moreover,
if $w_i > w_{i+1}$, then $\sigma'(c_i) < \sigma'(c_{i+1})$ because
each $w_\sigma^{(r)}$ is weakly increasing.

We define the inverse map $\beta: (w,\sigma') \mapsto \sigma$ by the formula
$$
\sigma(x) = \bigcup_{j \in e_w(x)} \sigma'(c_j).
$$
The $(P,\theta)$-multiset-valued partition $\sigma$ respects $\theta$ because
$e_w$ is a linear multi-extension. Thus if $x<y$ in $P$ and $\theta(x)>\theta(y)$, then $\sigma(x)<\sigma(y)$ since
$e_w(x)<e_w(y)$ and there is a descent in $w$ between the corresponding entries of $\theta(x)$ and $\theta(y)$.

Then
$\beta \circ \alpha = {\rm id}$ follows immediately. For $\alpha
\circ \beta = {\rm id}$, consider a submultiset $\sigma'(c_j) \subset
\sigma(x)$. One checks that this submultiset gives rise to
$|\sigma'(c_j)|$ consecutive letters all equal to $\theta(x)$ in
$w_\sigma$ and that this is a maximal set of consecutive repeated
letters. This shows that one can recover $\sigma'$.  To see that $w$
is recovered correctly, one notes that if $\sigma'(c_j)$ and
$\sigma'(c_{j+1})$ contain the same letter $r$ then $w_j < w_{j+1}$
so by definition $w_j$ is placed correctly before $w_{j+1}$ in
$w^{(r)}_\sigma$.

\end{proof}

\begin{example}\label{ex:Lhatbijection}
 Let $\theta$ be the labeling
\begin{center}
\begin{ytableau}{3}&{4}&{5} \\ {1}&{2} \end{ytableau}
\end{center}
of the shape $\lambda = (3,2)$. (Note that this is the labeling $\theta_s$ described in Section~\ref{sec:PthetaPartitions}.) Take the
$(\lambda,\theta)$-multiset-valued partition 
\begin{center}
\ytableausetup{boxsize=.7cm}
\begin{ytableau}{112}&{23}&{345} \\ {45}&{667} \end{ytableau}
\end{center}
in $\tA(\lambda,\theta)$.  Then we have $$w_{\sigma} =
(3,3;3,4;4,5;1,5;1,5;2,2;2),$$ where for example,
$w_{\sigma}^{(1)}=(3,3)$ since the cell labeled $3$ contains two copies of
the number $1$ in $\sigma$. Therefore $$w = (3,4,5,1,5,1,5,2)$$ and
the corresponding composition $\mathcal{C}(w)$ is $(3,2,2,1)$.  Then $\sigma'$
written as sequence is
$$\{1,1,2\},\{2,3\},\{3\},\{4\},\{4\},\{5\},\{5\}, \{6,6,7\}.$$ For example
$\sigma'(c_1) = \{1,1,2\}$ since $w_{\sigma}^{(1)}$ contributes two $3$'s and
$w_{\sigma}^{(2)}$ contributes one $3$ to the beginning of
$w_{\sigma}$.

To obtain the inverse map, $\beta$, read
$w$ and $\sigma'$ in parallel and place $\sigma'(c_i)$ into cell
$\theta_s^{-1}(w_i)$. For example, we put $\{1,1,2\}$ into the cell
labeled $3$, and we put $\{2,3\}$ into the cell labeled $4$.

The linear multi-extension, $e_w$ in this example can be represented by the filling below. 
\begin{center}
\begin{ytableau}{1}&{2}&{357} \\ {46}&{8} \end{ytableau}
\end{center}
\end{example}

\subsection{Antipode}

Recall that in $\QSym$, 

\[S(L_\alpha)=(-1)^{|\alpha|}L_{\omega(\alpha)} = (-1)^{|\alpha|}\omega(L_\alpha)=\omega(L_\alpha(-x_1,-x_2,\ldots)).\] 
Using the set $\{\L_\alpha\}$, we have a similar result in $\mQSym$.

\begin{theorem}\label{LhatAntipode}
 In $\mQSym$, $$S(\tilde L_\alpha)=\L_{\omega(\alpha)}(-x_1,-x_2,\ldots).$$
\end{theorem}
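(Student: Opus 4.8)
The plan is to prove $S(\tilde L_\alpha) = \hat L_{\omega(\alpha)}(-x_1,-x_2,\ldots)$ by relating both sides to already-established antipode formulas and to the involution $\omega$. First I would recall Theorem~\ref{thm:mQSymAntipode}, which gives $S(\tilde L_\alpha) = \sum_\beta (-1)^{|\beta|} M_{\beta,\omega(\alpha)} \tilde L_\beta$, the sum over all compositions $\beta$. So the task reduces to showing that the right-hand side, $\hat L_{\omega(\alpha)}(-x_1,-x_2,\ldots)$, expands in the $\tilde L$-basis as exactly this same sum.

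Next I would analyze $\hat L_{\omega(\alpha)}(-x_1,-x_2,\ldots)$. The key input is Proposition~\ref{Lhatisabasis}, which says $\omega(\tilde L_\gamma) = \hat L_{\omega(\gamma)}$; equivalently $\hat L_{\omega(\alpha)} = \omega(\tilde L_\alpha)$. Now I want to understand the operation ``substitute $x_i \mapsto -x_i$'' on these multi-symmetric-type power series. For an ordinary quasisymmetric function of homogeneous degree $d$, substituting $-x_i$ multiplies by $(-1)^d$; but $\hat L_{\omega(\alpha)}$ is an infinite sum of homogeneous pieces $\hat L_{\omega(\alpha)}^{(i)}$ of degree $|\alpha|+i$, so the substitution acts on the degree-$(|\alpha|+i)$ component by $(-1)^{|\alpha|+i}$. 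I would therefore write
\[
\hat L_{\omega(\alpha)}(-x_1,-x_2,\ldots) = \sum_{i\ge 0} (-1)^{|\alpha|+i}\,\hat L_{\omega(\alpha)}^{(i)},
\]
and then expand each $\hat L_{\omega(\alpha)}^{(i)}$ using Theorem~\ref{expandLh}: $\hat L_{\omega(\alpha)}^{(i)} = \sum_{E\subset[n+i-1]} |T(D(\alpha),E)|\, L_{\omega(\mathcal C(E))}$ (applying the theorem with the composition $\omega(\alpha)$ in place of $\alpha$, so $D(\omega(\omega(\alpha))) = D(\alpha)$). This writes the right side as an explicit infinite combination of ordinary fundamental quasisymmetric functions $L_\gamma$.

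Then I would compute the left side, $S(\tilde L_\alpha) = \sum_\beta (-1)^{|\beta|} M_{\beta,\omega(\alpha)}\tilde L_\beta$, in the same $L_\gamma$-basis, using the expansion of $\tilde L_\beta$ into fundamental quasisymmetric functions (Theorem 5.12 of \cite{lam2007combinatorial}, quoted in Section~\ref{mQSym}): $L_\beta^{(i)} = \sum_E |T(\mathrm{Des}(\beta),E)| L_{\mathcal C(E)}$. The goal is then a purely combinatorial identity of coefficients of each $L_\gamma$: one must match, degree by degree, the merging numbers $M_{\beta,\omega(\alpha)}$ combined with $i$-extensions against the $i$-extension counts $|T(D(\alpha),E)|$, with signs $(-1)^{|\beta|}$ versus $(-1)^{|\alpha|+i}$. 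A cleaner route, which I would try first, is to avoid the explicit $L_\gamma$-expansion altogether: apply $\omega$ (a continuous algebra homomorphism on $\mQSym$) to the known identity $S(\tilde R_\alpha) = (-1)^n\sum_\beta M_{\omega(\alpha),\beta}\tilde R_\beta$ of Theorem~\ref{MNSymAntipode}, or dualize Theorem~\ref{thm:mQSymAntipode} directly through Proposition~\ref{Lhatisabasis}, checking that $\omega(\tilde L_\beta)(-x) $ has the right homogeneous-degree signs to convert $(-1)^{|\beta|}M_{\beta,\omega(\alpha)}$ into the claimed form. The main obstacle I expect is bookkeeping the signs: $\hat L_{\omega(\alpha)}$ has components in infinitely many degrees $|\alpha|+i$, so ``plug in $-x_i$'' does not factor out a single global sign, and one must confirm that the degree-$(|\alpha|+i)$ piece of $\hat L_{\omega(\alpha)}$ corresponds exactly to the terms $\tilde L_\beta$ with $|\beta| = |\alpha|+i$ appearing in $S(\tilde L_\alpha)$ — i.e. that $M_{\beta,\omega(\alpha)}\ne 0$ forces $|\beta|\le |\alpha|$ and that the lowest-degree ($i=0$) piece recovers the classical $S(L_\alpha) = (-1)^{|\alpha|}\omega(L_\alpha)$. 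Once the degree/sign alignment is pinned down, the identity should follow from combining Theorem~\ref{thm:mQSymAntipode}, Proposition~\ref{Lhatisabasis}, and Theorem~\ref{expandLh}.
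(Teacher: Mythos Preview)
Your route is workable in principle but is considerably more roundabout than the paper's, and as written it stops short of a complete argument. The paper's proof is four lines: it expands $\tilde L_\alpha$ directly in the classical $L$-basis via the $i$-extension formula (Theorem~5.12 of \cite{lam2007combinatorial}), applies the known $\QSym$ antipode $S(L_\gamma)=(-1)^{|\gamma|}L_{\omega(\gamma)}$ term by term (using continuity of $S$), and then recognizes the resulting series $\sum_E |T(D,E)|(-1)^{|\mathcal C(E)|}L_{\omega(\mathcal C(E))}$ as exactly $\hat L_{\omega(\alpha)}(-x_1,-x_2,\ldots)$ via Theorem~\ref{expandLh}. No merging numbers enter at all; Theorem~\ref{thm:mQSymAntipode} is not used.

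By contrast, you begin from the merging formula $S(\tilde L_\alpha)=\sum_\beta (-1)^{|\beta|}M_{\beta,\omega(\alpha)}\tilde L_\beta$ and then try to match it against the degree decomposition of $\hat L_{\omega(\alpha)}(-x)$. This forces you to compare a $\tilde L$-expansion (each $\tilde L_\beta$ being inhomogeneous) with a homogeneous-degree decomposition, which is why you are left with an unproved ``purely combinatorial identity'' at the end. The paper avoids this entirely by working throughout in the homogeneous $L$-basis, where the substitution $x_i\mapsto -x_i$ simply contributes $(-1)^{|\mathcal C(E)|}$ on each term and the two $i$-extension expansions match on the nose. Your ``cleaner route'' paragraph is heading in the right direction (use the $L$-expansions and Proposition~\ref{Lhatisabasis}), but you should apply the $L$-expansion to $\tilde L_\alpha$ \emph{before} taking the antipode, not after passing through $M_{\beta,\omega(\alpha)}$. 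One small slip: $M_{\beta,\omega(\alpha)}\neq 0$ forces $|\beta|\geq |\alpha|$, not $|\beta|\leq |\alpha|$, since $\omega(\alpha)$ is obtained from $\beta$ by merging boxes.
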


\begin{proof}
Using Theorem~\ref{multiexpandK} and the antipode in $\QSym$, we see that
\begin{eqnarray}
 S(\tilde L_\alpha) &=& S\left(\displaystyle\sum_{E \subset [n+i - 1]} |T(D,E)| L_{\mathcal{C}(E)}\right) \nonumber \\
&=& \displaystyle\sum_{E \subset [n+i - 1]} |T(D,E)| S(L_{\mathcal{C}(E)}) \nonumber \\
&=& \displaystyle\sum_{E \subset [n+i - 1]} |T(D,E)| (-1)^{|\mathcal{C}(E)|}L_{\omega(\mathcal{C}(E))} \nonumber \\
&=& \L_{\omega(\alpha)}(-x_1,-x_2,\ldots). \nonumber 
\end{eqnarray}
\end{proof}

\section{The Hopf algebra of multi-symmetric functions}\label{mSym}
We next describe the space of multi-symmetric functions, $\mSym$. We refer the reader to \cite{lam2007combinatorial} for details. 

\subsection{Set-valued tableaux}\label{sec:setvaluedtableaux}

Recall the definition of $\tilde{K}_{\lambda,\theta_s}$ from Section~\ref{sec:PthetaPartitions}.We define
$$\mSym=\prod_{\lambda}\mathbb{Z}\tilde  K_{\lambda,\theta_s}$$ to be the subspace of $\mQSym$ continuously spanned by the $\tilde K_{\lambda,\theta_s}$, where $\lambda$ varies
over all partitions. From this point forward, we will write $\tilde K_\lambda$ in place of $\tilde{K}_{\lambda,\theta_s}$ and call a $(\lambda/\mu,\theta_s)$-set-valued partition 
a \textit{set-valued tableau} of shape $\lambda/\mu$. We will think of these tableaux as fillings of a Young diagram with finite, nonempty subsets of positive integers such that the subsets are weakly increasing across rows and strictly increasing down columns, where subsets are ordered as in Section~\ref{sec:multifunquasis}. For any set-valued tableau $T$, $|T|$ denotes the sum of the sizes of the subsets labeling the boxes of $T$.

\begin{example}
 For $\lambda=(2,1)$, we have 
$\tilde K_{\lambda}=x_1^2x_2+ 2x_1x_2x_3+x_1^2x_2^2+3x_1^2x_2x_3+8x_1x_2x_3x_4+\ldots$, corresponding
to the following labeled poset.
\begin{center}
\begin{tikzpicture}[scale=1.5]
\node (A) at (0,0) {$2$};
\node (B) at (-1,1) {$3$};
\node (C) at (1,1) {$1$};
\draw (A) -- (B)
(A) -- (C);
\end{tikzpicture}
\end{center}
The set-valued tableaux corresponding to the first four terms are shown below. Note that we omit commas in the subsets---the box in position row one and column two of the fourth tableau is filled with $\{1,2\}$.
\begin{center}
\begin{ytableau}
1 & 1 \\ 2
\end{ytableau}\qquad
\begin{ytableau}
1 & 2 \\ 3
\end{ytableau}\qquad
\begin{ytableau}
1 & 3 \\ 2
\end{ytableau}\qquad
\begin{ytableau}
1 & 12 \\ 2
\end{ytableau}\qquad
\begin{ytableau}
1 & 12 \\ 3
\end{ytableau}\qquad
\begin{ytableau}
1 & 13 \\ 2
\end{ytableau}\qquad
\begin{ytableau}
1 & 1 \\ 23
\end{ytableau}
\end{center}
\end{example}

\subsection{Basis of stable Grothendieck polynomials}\label{sec:Grothendieckfirst}
We now introduce another (continuous) basis for $\mSym$, the \textit{stable Grothendieck polynomials}. Stable Grothendieck polynomials originated from the Grothendieck
polynomials of Lascoux and Sch\"{u}tzenberger \cite{lascoux1structure}, which served as representatives of $K$-theory classes of structure sheaves of Schubert varieties. 
Through the work of Fomin and Kirillov \cite{fomin1996yang} and Buch \cite{buch2002littlewood}, the stable Grothendieck polynomials, a limit of the Grothendieck polynomials,
were discovered and given the combinatorial interpretation in the theorem below. These symmetric functions play the role of Schur functions in the $K$-theory of Grassmannians. 

\begin{theorem}\cite[Theorem 3.1]{buch2002littlewood}\label{thm:GrothendieckDef}
 The stable Grothendieck polynomial $G_{\lambda/\mu}$ is given by the formula $$G_{\lambda/\mu}=\displaystyle\sum_T(-1)^{|T|-|\lambda/\mu|}x^T,$$
where the sum is taken over all set-valued tableaux of shape $\lambda/\mu$.
\end{theorem}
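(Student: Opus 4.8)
The plan is to prove the identity in three movements: show the right-hand side is a symmetric function, pin down its lowest-degree part, and then match it with $G_{\lambda/\mu}$ via the recursion that defines the Grothendieck side. Write $F_{\lambda/\mu}=\sum_T(-1)^{|T|-|\lambda/\mu|}x^T$ for the sum over all set-valued tableaux of shape $\lambda/\mu$. Recall that, following Lascoux--Sch\"utzenberger and Fomin--Kirillov, $G_{\lambda/\mu}$ arises from the Grothendieck polynomials $\mathfrak{G}_w$: one has $\mathfrak{G}_{w_0}=x_1^{n-1}x_2^{n-2}\cdots x_{n-1}$ and $\mathfrak{G}_{ws_i}=\pi_i\mathfrak{G}_w$ whenever $\ell(ws_i)<\ell(w)$, where $\pi_i$ is the isobaric ($K$-theoretic) divided difference; for a Grassmannian permutation with unique descent at position $k$ one obtains a polynomial in $x_1,\dots,x_k$, and the stable Grothendieck polynomial $G_\lambda$ is the limit of these as $k\to\infty$ with $\lambda$ held fixed, with $G_{\lambda/\mu}$ defined analogously (equivalently, via the coproduct, by the skew expansion $G_{\lambda/\mu}=\sum_\nu \widehat{c}^{\,\lambda}_{\mu\nu}G_\nu$).

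First I would check that $F_{\lambda/\mu}$ lies in the completion of $\Sym$. The tool is a set-valued analogue of the Bender--Knuth involution: for each $i$, one builds a weight-changing bijection on set-valued tableaux of shape $\lambda/\mu$ that interchanges the exponents of $x_i$ and $x_{i+1}$ in $x^T$. A box containing both $i$ and $i+1$ is ``frozen'': such boxes partition each row into segments, and within the free part of a segment one rebalances the number of boxes whose only relevant entry is a pure $i$ against those that are a pure $i+1$, exactly as classically. The two points that need genuine verification are that the rebalanced filling is still a valid set-valued tableau (weakly increasing rows, strictly increasing columns) and that $|T|$ is unchanged, so the sign $(-1)^{|T|-|\lambda/\mu|}$ is preserved; granting these, every monomial has the same coefficient in $F_{\lambda/\mu}$ after swapping $x_i\leftrightarrow x_{i+1}$, so $F_{\lambda/\mu}$ is symmetric. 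Next, a set-valued tableau with $|T|=|\lambda/\mu|$ is exactly an ordinary semistandard tableau of shape $\lambda/\mu$, and every other one contributes a term of strictly higher degree, so $F_{\lambda/\mu}=s_{\lambda/\mu}+(\text{higher-degree terms})$.

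It remains to identify $F_{\lambda/\mu}$ with $G_{\lambda/\mu}$. For a straight shape $\lambda$ I would truncate to $x_1,\dots,x_k$ and induct down the divided-difference recursion: the Grassmannian Grothendieck polynomial with descent $k$ has a description as a sum over \emph{flagged} set-valued tableaux, and one verifies that applying $\pi_i$ corresponds to relaxing the $i$-th flag bound by one; letting $k\to\infty$ erases the flag and produces $F_\lambda$. An alternative, and perhaps cleaner, identification uses that symmetry together with the leading term $s_\lambda$ already determines $G_\lambda$ once one knows it obeys the $K$-theoretic Pieri rule of Lenart: checking that $F_\lambda$ satisfies that rule is a direct set-valued row-insertion argument. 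The skew case then follows from the straight case via the coproduct computation for $\mSym$ (equivalently the ``ambient rectangle'' device, in which $G_{\lambda/\mu}$ is read off from $G_\lambda$ expanded around a large rectangle, and on the tableau side one literally slices a rectangular set-valued tableau into two pieces).

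The step I expect to be the main obstacle is the symmetry argument. The set-valued Bender--Knuth involution is substantially more delicate than its classical counterpart: the boxes carrying both $i$ and $i+1$ act as immovable walls, the bookkeeping of ``how many copies of $i$'' versus ``how many copies of $i+1$'' must respect column-strictness on both sides of each wall, and one must confirm the move is genuinely an involution and, above all, preserves $|T|$ so the alternating signs line up --- without weight-preservation the signed sum need not be symmetric. A secondary, more technical difficulty is making the passage to the Grothendieck side precise: pinning down the flagged set-valued tableau model for Grassmannian $\mathfrak{G}_w$, matching $\pi_i$ to a flag relaxation, and controlling the $k\to\infty$ limit; this is routine but fiddly, which is the reason the Pieri-rule route may be preferable.
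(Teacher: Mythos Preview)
The paper does not contain a proof of this statement: it is quoted as \cite[Theorem~3.1]{buch2002littlewood} and used purely as the combinatorial model for the stable Grothendieck polynomials that underlies the basis $\{G_\lambda\}$ of $\mSym$. There is therefore nothing in the paper to compare your proposal against.

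For what it is worth, your outline is a plausible route to the result and is close in spirit to arguments in the literature. The step you flag as the main obstacle --- a set-valued Bender--Knuth involution --- is indeed the subtle part if one wants symmetry directly from the tableau side, though your worry about $|T|$-preservation is slightly misplaced: the rebalancing swaps singleton $i$'s for singleton $i+1$'s one-for-one within each free segment, so $|T|$ is automatically preserved; the real care is in verifying column-strictness at the boundaries of the frozen boxes. Your alternative route through flagged set-valued tableaux and the isobaric divided-difference recursion is closer to how the formula is actually established in the Fomin--Kirillov/Buch line of work, and it has the advantage of making symmetry a consequence rather than a separate lemma.
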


The stable Grothendieck polynomials are related to the $\tilde K_\lambda$ by 
$$\tilde K_\lambda(x_1,x_1,\ldots)=(-1)^{|\lambda|}G_{\lambda}(-x_1,-x_2,\ldots).$$

\begin{remark}\label{rem:Gamma}
In \cite{buch2002littlewood}, Buch studied a bialgebra $\Gamma=\oplus_\lambda \mathbb{Z} G_\lambda$ spanned by the set of stable Grothendieck polynomials.
Note that the bialgebra $\Gamma$ is not the same as $\mSym$. In particular, the antipode formula given in Theorem~\ref{thm:mSymAntipode} is 
valid in $\mSym$ but not in $\Gamma$ as only finite linear combinations of stable Grothendieck polynomials are allowed in $\Gamma$.
\end{remark}

\subsection{Weak set-valued tableaux}\label{sec:weaksetvalued}

The following definition is needed to introduce one final basis for $\mSym$, $\{J_\lambda\}$.

\begin{definition}
 A weak set-valued tableau T of shape $\lambda/\nu$ is a filling of the boxes of the skew shape $\lambda/\nu$ with finite, non-empty multisets of positive
integers so that 
\begin{itemize}
 \item[(1)] the largest number in each box is stricty smaller than the smallest number in the box directly to the right of it, and
\item[(2)] the largest number in each box is less than or equal to the smallest number in the box directly below it. 
\end{itemize}
\end{definition}

In other words, we fill the boxes with multisets so that rows are strictly increasing and columns are weakly increasing. For example, the filling of shape $(3,2,1)$ shown below gives
a weak set-valued tableau, $T$, of weight $x^T= x_1 x_2^3 x_3^3 x_4^2 x_5 x_6 x_7$.

\begin{center}
\begin{ytableau}
12 & 33 & 46 \\
223 & 4  \\
57 \\
\end{ytableau}
\end{center}

Let $J_{\lambda/\nu} = \displaystyle\sum_T x^T$ be the weight generating function of weak set-valued tableaux $T$ of shape $\lambda/\nu$.

\begin{theorem}\cite[Proposition 9.22]{lam2007combinatorial}
 For any skew shape $\lambda/\nu$, we have $$\omega(\tilde K_{\lambda/\nu})=J_{\lambda/\nu}.$$
\end{theorem}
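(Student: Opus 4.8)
The statement to prove is $\omega(\tilde K_{\lambda/\nu}) = J_{\lambda/\nu}$, where $\omega$ is the fundamental involution on symmetric functions. The natural approach is to decompose $\tilde K_{\lambda/\nu}$ into fundamental quasisymmetric functions, apply $\omega$ term-by-term using $\omega(L_\alpha) = L_{\omega(\alpha)}$, and then recognize the resulting expansion as the $L$-expansion of $J_{\lambda/\nu}$. The first ingredient is Theorem~\ref{multiexpandK}, which writes $\tilde K_{\lambda/\nu} = \sum_{N}\sum_{w\in\tilde{\mathcal J}_N(\lambda/\nu,\theta_s)}\tilde L_{\mathcal C(w)}$; combining this with Theorem~\ref{expandLh} (or more directly with the analogue of Theorem 5.12 of \cite{lam2007combinatorial}) gives an $L$-expansion of $\tilde K_{\lambda/\nu}$. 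Applying $\omega$ turns each $L_{\mathcal C(E)}$ into $L_{\omega(\mathcal C(E))}$, and by Proposition~\ref{Lhatisabasis} (namely $\omega(\tilde L_\alpha) = \hat L_{\omega(\alpha)}$) and Theorem~\ref{expandKhat}, this exactly assembles the $L$-expansion of $\hat K_{\omega(\lambda/\nu)}$-type objects. So the first half of the plan is: show $\omega(\tilde K_{\lambda/\nu}) = \hat K$ of the transposed shape, where $\hat K$ is built from multiset-valued tableaux.

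\textbf{Identifying $J$ with the multiset-valued generating function.} The second half is to check that $J_{\lambda/\nu}$, defined via weak set-valued tableaux, coincides with $\hat K_{\lambda^t/\nu^t, \theta_s^t}$ — i.e. with the generating function of $(\lambda^t/\nu^t,\theta_s)$-multiset-valued partitions. Here the point is a conjugation bijection: a weak set-valued tableau of shape $\lambda/\nu$ (multisets, rows strictly increasing, columns weakly increasing) transposes to a filling of $\lambda^t/\nu^t$ with multisets whose rows are weakly increasing and columns strictly increasing, which is precisely the "multiset-valued tableau" notion from Section~\ref{sec:Lhat}. One must be slightly careful about how the order relation on multisets ($a \le b$ iff $\max a \le \min b$, $a < b$ iff $\max a < \min b$) interacts with the row/column conditions, but the translation is the same one that makes $\omega(s_{\lambda/\nu}) = s_{(\lambda/\nu)^t}$ work at the level of semistandard tableaux, now carried out with multisets instead of single entries. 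This bijection is weight-preserving, so it identifies $J_{\lambda/\nu}$ with $\hat K_{\lambda^t/\nu^t}$.

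\textbf{Main obstacle.} The genuinely delicate point is matching up the two sides cleanly at the level of $P$-partition theory when the shape is \emph{skew}: $\tilde K_{\lambda/\nu}$ and $\hat K$ were developed carefully only for the straight-shape or general-poset cases, and one must make sure that the poset-plus-labeling $(\lambda/\nu,\theta_s)$ has a transpose whose labeling is again of the form $\theta_s$ up to an order-reversal that accounts for the appearance of $\omega$. Concretely, the heart of the argument is the identity $\omega(\tilde L_\alpha) = \hat L_{\omega(\alpha)}$ (Proposition~\ref{Lhatisabasis}) together with the observation that reading $\lambda/\nu$ in the reading order $\theta_s$ versus reading $\lambda^t/\nu^t$ in its reading order swaps rows and columns in exactly the way that the map $\omega$ on compositions (via complementation of descent sets) records; pinning this down for skew shapes — i.e. checking that $\tilde{\mathcal J}_N(\lambda/\nu,\theta_s)$ maps bijectively onto $\tilde{\mathcal J}_N(\lambda^t/\nu^t,\theta_s)$ under $w \mapsto \omega(w)$-style reversal, and that the $T(D,E)$ multiplicities match — is the step I expect to require the most care. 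Everything else is a matter of applying $\omega$ inside a (continuous) linear combination and invoking Theorems~\ref{multiexpandK}, \ref{expandLh}, and \ref{expandKhat}.
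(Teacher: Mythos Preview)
This statement is not proved in the paper; it is quoted as Proposition~9.22 of \cite{lam2007combinatorial} and used as a black box, so there is no proof here to compare your argument against.

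On your plan itself: the outline is correct, and your identification of $J_{\lambda/\nu}$ with a $\hat K$ is right --- though it is tidier to avoid transposing the shape and instead use the complementary labeling $\bar\theta_s(p)=n+1-\theta_s(p)$: a $(\lambda/\nu,\bar\theta_s)$-multiset-valued partition has strict rows and weak columns, so $J_{\lambda/\nu}=\hat K_{\lambda/\nu,\bar\theta_s}$ on the nose, and the target identity becomes the uniform statement $\omega(\tilde K_{P,\theta})=\hat K_{P,\bar\theta}$. Pushing Theorems~\ref{multiexpandK} and~\ref{expandKhat} through Proposition~\ref{Lhatisabasis}, this reduces to showing that the multisets
\[
\{\omega(\mathcal C(w)):w\in\tilde{\mathcal J}_N(\lambda/\nu,\theta_s)\}
\quad\text{and}\quad
\{\mathcal C(\bar w):w\in\tilde{\mathcal J}_N(\lambda/\nu,\theta_s)\}
\]
coincide, where $\bar w_i=n+1-w_i$. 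Term by term they do not: the first complements the \emph{reversed} descent set of $w$, the second just complements it. This is exactly the obstacle you flag, and your write-up stops short of resolving it.

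What closes the gap is a fact you have not used: $\tilde K_{\lambda/\nu}$ is a \emph{symmetric} function. Reversing the variable alphabet fixes any symmetric function and sends $\tilde L_\alpha$ to $\tilde L_{rev(\alpha)}$ (this follows from the $i$-extension description, since $|T(D,E)|$ is invariant under simultaneously reflecting $D$ and $E$). Hence the multiset $\{\mathcal C(w):w\in\tilde{\mathcal J}_N(\lambda/\nu,\theta_s)\}$ is invariant under $\alpha\mapsto rev(\alpha)$, which is precisely the reversal symmetry needed to match the two multisets above. With that one extra sentence your argument is complete; without invoking symmetry (or an equivalent input), the Jordan--H\"older matching cannot be finished for general skew shapes.
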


\section{The Hopf algebra of Multi-symmetric functions}\label{MSym}
\subsection{Reverse plane partitions}
We next introduce the big Hopf algebra of Multi-symmetric function, $\MSym$, with basis $\{g_\lambda\}$. $\MSym$ is isomorphic to $Sym$ as a Hopf algebra, but the basis
$\{g_\lambda\}$ is distinct from the basis of Schur functions, $\{s_\lambda\}$ for $Sym$.

\begin{definition}
 A reverse plane partition T of shape $\lambda$ is a filling of the Young diagram of shape $\lambda$ with positive integers such that the numbers are weakly increasing
in rows and columns.
\end{definition}

Given a reverse plane partition $T$, let $T(i)$ denote the number of columns of $T$ that contain the number $i$. Then 
$x^T:=\displaystyle\prod_{i\in \mathbb{P}}x_i^{T(i)}$. Now we may define the \textit{dual stable Grothendieck polynomial} 
$$g_\lambda=\displaystyle\sum_{sh(T)=\lambda}x^T,$$ where we sum over all reverse plane partitions of shape $\lambda$. For a skew shape $\lambda/\mu$, we may
define $g_{\lambda/\mu}$ analogously, summing over reverse plane partitions of shape $\lambda/\mu$.

\begin{example}
 We use the definition of $g_\lambda$ to compute 
$$g_{(2,1)}=2x_1x_2x_3+2x_1x_3x_4+\ldots+x_1^2x_2+x_1^2x_3+\ldots + x_1^2+x_2^2+\ldots+x_1x_2+x_1x_3+\ldots$$
corresponding 
to fillings of the types shown below. 

\begin{center}
\begin{ytableau}
1 & 2 \\
3 \\
\end{ytableau}\qquad
\begin{ytableau}
1 & 3 \\
2 \\
\end{ytableau}\qquad
\begin{ytableau}
1 & 1 \\
2 \\
\end{ytableau}\qquad
\begin{ytableau}
1 & 1 \\
1 \\
\end{ytableau}\hspace{.2in}
\begin{ytableau}
1 & 2 \\
1 \\
\end{ytableau} 
\end{center}
\end{example}

\subsection{Valued-set tableaux}

We introduce one more basis for $\MSym$, $\{j_\lambda\}$, which we show in the next section is dual under the usual Hall inner product to $\{\tilde J_\lambda\} = \{(-1)^{|\lambda|}J_\lambda(-x_1,-x_2,\ldots)\}$.

\begin{definition}

 A valued-set tableau $T$ of shape $\lambda / \mu$ is a filling of the boxes of $\lambda / \mu$ with positive integers so that
\begin{itemize}
 \item[(1)] the transpose of the filling of $T$ is a semistandard tableau, and
\item[(2)] we are provided with the additional information of a decomposition of the shape into a disjoint union of groups of boxes, $\lambda / \mu = \bigsqcup A_j$,
so that each $A_i$ is connected, contained in a single column, and each box in $A_i$ contains the same number.
\end{itemize}
\end{definition}

Given such a valued-set tableau, $T$, let $a_i$ be the number of groups $A_j$ that contain the number $i$. Then $x^T := \Pi_{i\geq 1} x_i^{a_i}$.
Finally, let $j_{\lambda / \mu}:= \displaystyle\sum_{T}x^T$, where the sum is over all valued-set tableaux of shape $\lambda / \mu$.

\begin{example}
The image below  shows an example of a valued-set tableau. This tableau contributes the monomial $x_1x_2x_3x_5x_6^2$ to $j_{(4,3,1,1)}$. Note that the given assignment of labels will contribute eight monomials---one for each possible decomposition.
\begin{center}
\includegraphics[width=.9in]{valuedset.pdf}
\end{center}
\end{example}

\begin{proposition}\cite[Proposition 9.25]{lam2007combinatorial}
 We have $$\omega(g_{\lambda/\mu})=j_{\lambda/\mu}.$$
\end{proposition}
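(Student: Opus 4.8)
The plan is to exhibit an explicit weight-preserving bijection between reverse plane partitions of shape $\lambda/\mu$ and valued-set tableaux of shape $\lambda^t/\mu^t$, after which the identity $\omega(g_{\lambda/\mu}) = j_{\lambda/\mu}$ follows by comparing generating functions and recalling that $\omega$ acts on the basis of monomial-like objects by transposing shapes. The starting observation is that $\omega$ on $\MSym \cong \Sym$ can be understood through the duality already used in the paper: $g_{\lambda/\mu}$ is the weight generating function of reverse plane partitions, counted by the statistic ``number of columns containing $i$,'' while $j_{\lambda/\mu}$ is the weight generating function of valued-set tableaux, counted by ``number of connected vertical groups containing $i$.'' Transposing a shape exchanges rows and columns, so a reverse plane partition of $\lambda/\mu$ (weakly increasing along rows \emph{and} columns) transposes to a filling of $\lambda^t/\mu^t$ that is still weakly increasing both ways; the content of the argument is to see that such a transposed filling is exactly the data of a valued-set tableau, i.e. a column-strict-up-to-transpose filling together with a decomposition into connected same-valued vertical strips.

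First I would set up the bijection concretely. Given a reverse plane partition $T$ of shape $\lambda/\mu$, form $T^t$ of shape $\lambda^t/\mu^t$. In $T^t$ the entries weakly increase down each column and weakly increase along each row. To produce a valued-set tableau, within each column of $T^t$ I group maximal runs of equal entries into the connected blocks $A_j$; this is forced, and it produces the decomposition required in condition (2) of the valued-set tableau definition. Condition (1) — that the transpose of the filling is semistandard, i.e. strictly increasing down columns and weakly increasing along rows of $(T^t)^t = T$ — must be checked, but that is just the reverse plane partition condition on $T$ read the right way: columns of $T$ weakly increase, and I need to verify that after transposing and regrouping the relevant strictness (strict along rows of $T^t$, equivalently strict down columns of $T$ \emph{in the transposed/semistandard sense}) is exactly encoded by the block decomposition rather than by the entries themselves. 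The inverse map takes a valued-set tableau, reads off the entry in each box, transposes, and discards the decomposition data; one checks it lands in reverse plane partitions and inverts the forward map.

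Next I would verify that the bijection is weight-preserving. For a reverse plane partition $T$, $x^T = \prod_i x_i^{T(i)}$ where $T(i)$ is the number of columns of $T$ containing $i$. Under transposition a column of $T$ becomes a row of $T^t$, so $T(i)$ equals the number of rows of $T^t$ containing $i$. On the valued-set side, $x^{T'} = \prod_i x_i^{a_i}$ where $a_i$ is the number of groups $A_j$ containing $i$. The key combinatorial lemma is that the number of rows of $T^t$ containing the value $i$ equals the number of connected vertical same-value blocks labeled $i$ in the associated valued-set tableau: in each column, a maximal run of $i$'s is one block, and because along each row the entries are weakly increasing while the block structure only splits ties within columns, counting rows that ``see'' an $i$ matches counting blocks. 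This is the step where I expect the only real subtlety, since one must be careful that two blocks of value $i$ in the same column cannot occur (maximal runs) and that the row-count on one side really lines up with the block-count on the other; a small picture, e.g. running the construction on $\lambda/\mu = (2,1)$ as in the $g_{(2,1)}$ example, should make it transparent and serve as the illustrative example.

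Finally, I would assemble the conclusion: summing $x^T$ over reverse plane partitions of $\lambda/\mu$ gives $g_{\lambda/\mu}$, summing $x^{T'}$ over valued-set tableaux of $\lambda^t/\mu^t$ gives $j_{\lambda^t/\mu^t}$, and the weight-preserving bijection shows these are equal. Since $\omega$ is the Hopf algebra automorphism that in this combinatorial model is realized precisely by shape-transposition on these generating functions — this is the same mechanism by which $\omega(\tilde K_{\lambda/\nu}) = J_{\lambda/\nu}$ and $\omega(\tilde L_\alpha) = \hat L_{\omega(\alpha)}$ were established earlier in the paper, so I would invoke the analogous compatibility — we get $\omega(g_{\lambda/\mu}) = j_{\lambda/\mu}$. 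The main obstacle is genuinely just the bookkeeping in the weight-matching lemma, namely that ``number of columns of the RPP containing $i$'' and ``number of connected same-value vertical groups labeled $i$ in the valued-set tableau'' coincide under transposition; everything else is formal. Alternatively, if a direct bijection proves fiddly, a fallback is to use the known duality $\langle g_{\lambda/\mu}, \tilde K_{\nu}\rangle$-type pairing together with $\omega(\tilde K_{\lambda/\nu}) = J_{\lambda/\nu}$ and the duality of $\{j_\lambda\}$ with $\{\tilde J_\lambda\}$ promised in the preceding paragraph, and transport the identity across the pairing exactly as in Lemma~\ref{lem:pairingantipode}.
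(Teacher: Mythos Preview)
The paper does not supply a proof of this proposition; it is quoted from \cite{lam2007combinatorial} without argument. That said, your primary approach has a genuine gap.

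A weight-preserving bijection between reverse plane partitions of shape $\lambda/\mu$ and valued-set tableaux of shape $\lambda^t/\mu^t$ would, at best, establish the equality $g_{\lambda/\mu} = j_{\lambda^t/\mu^t}$ of formal power series, not the relation $\omega(g_{\lambda/\mu}) = j_{\lambda/\mu}$. These are different statements, and the first one is false: for $\lambda=(1,1)$ one computes $g_{(1,1)} = p_1 + e_2$ while $j_{(2)} = e_2$. The assertion you lean on to bridge the two, that ``$\omega$ is realized by shape-transposition on these generating functions,'' is likewise false here, since $\omega(g_{(2)}) = \omega(h_2) = e_2 \neq p_1 + e_2 = g_{(1,1)}$. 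The involution $\omega$ does not act monomially, so no bijection that merely matches $x$-weights can implement it.

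The proposed bijection also fails on its own terms. Transposing a reverse plane partition yields a filling that is only \emph{weakly} increasing along rows, whereas a valued-set tableau must be strictly increasing along rows; the block decomposition lives in columns and cannot repair row-strictness (take the all-$1$'s reverse plane partition of shape $(2,2)$: its transpose is not a valued-set tableau under any decomposition). In the other direction, ``discard the decomposition data'' is many-to-one, since a fixed admissible filling can carry several block decompositions.

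Your fallback---using the Hall pairing $\langle g_\lambda, G_\mu\rangle = \delta_{\lambda\mu}$, the fact that $\omega$ is an isometry for that pairing, and the already-established identity $\omega(\tilde K_{\lambda/\nu}) = J_{\lambda/\nu}$ on the dual side---is the correct route and is in the spirit of how \cite{lam2007combinatorial} organizes these results. Pursue that instead of the bijection.
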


\section{Antipode results for $\mSym$ and $\MSym$}
As with $\mQSym$ and $\MNSym$, there is a pairing $\langle g_\lambda, G_\mu \rangle=\delta_{\lambda,\mu}$ 
with the usual Hall inner product for $\Sym$ defined by $\langle s_\lambda, s_\mu \rangle=\delta_{\lambda,\mu}$ and the structure 
constants satisfy the conditions of Lemma~\ref{lem:pairingantipode}. See Theorem 9.15 in \cite{lam2007combinatorial} for details. It follows that 
\[\langle \omega(g_\lambda), \omega(G_\mu) \rangle=\langle j_\lambda, \tilde J_\mu \rangle=\delta_{\lambda,\mu}\] and 
\[\langle \tilde j_\lambda, \tilde K_\mu \rangle=
\langle (-1)^{|\lambda|}g_\lambda (-x_1,-x_2,\ldots), (-1)^{|\mu|}G_\mu(-x_1,-x_2,\ldots) \rangle=\delta_{\lambda,\mu}.\]
We will use these facts 
to translate antipode results between $\mSym$ and $\MSym$. 

Using results from Section~\ref{sec:Lhat}, the following lemma will allow us to easily prove results regarding the antipode map in $\mSym$.

\begin{lemma}
Let $\lambda$ be a partition of $n$.
 We can expand $$J_\lambda = \displaystyle\sum_{N\geq n}\sum_{w\in \tilde{\mathcal{J}}_N(P,\theta)}\L_{\omega(\mathcal{C}(w))}.$$
\end{lemma}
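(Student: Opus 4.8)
The plan is to apply the operator $\omega$ to the expansion of $\tilde{K}_\lambda$ coming from Theorem~\ref{multiexpandK} (Stanley's Fundamental Theorem of $P$-partitions for the multi-setting). Recall that $J_\lambda = \omega(\tilde{K}_\lambda)$ (and more generally $\omega(\tilde{K}_{\lambda/\nu}) = J_{\lambda/\nu}$, from \cite[Proposition 9.22]{lam2007combinatorial}), and that $\tilde{K}_\lambda$ is shorthand for $\tilde{K}_{\lambda,\theta_s}$, where $P$ is the poset of cells of the Young diagram of $\lambda$ and $\theta_s$ is the row-reading labeling introduced in Section~\ref{sec:PthetaPartitions}. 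So I would start from
$$
J_\lambda = \omega(\tilde K_{\lambda,\theta_s}) = \omega\!\left(\sum_{N\geq n}\sum_{w\in\tilde{\mathcal{J}}_N(P,\theta_s)}\tilde L_{\mathcal{C}(w)}\right).
$$

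Next I would push $\omega$ through the sum. Since $\omega$ is a continuous linear involution on $\mQSym$ (we are working with continuous bases), it distributes over the — possibly infinite — sum indexed by $N$ and by the multi-Jordan–H\"older set. Then I invoke Proposition~\ref{Lhatisabasis}, which says $\omega(\tilde L_\alpha) = \hat{L}_{\omega(\alpha)}$. Applying this term by term with $\alpha = \mathcal{C}(w)$ gives
$$
J_\lambda = \sum_{N\geq n}\sum_{w\in\tilde{\mathcal{J}}_N(P,\theta_s)}\omega(\tilde L_{\mathcal{C}(w)}) = \sum_{N\geq n}\sum_{w\in\tilde{\mathcal{J}}_N(P,\theta_s)}\hat{L}_{\omega(\mathcal{C}(w))},
$$
which is exactly the claimed identity (the $P,\theta$ appearing in the statement being $P$ the cell poset of $\lambda$ with $\theta = \theta_s$).

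The only genuinely delicate point is the interchange of $\omega$ with the infinite sums — i.e. checking that $\omega$ is continuous with respect to the filtration topology, so that the formal manipulation above is legitimate. This is handled by the discussion of continuous spans and continuous linear functions at the start of Section~\ref{mQSym}: each homogeneous component of $J_\lambda$ is a finite sum of $\hat{L}_\beta$'s (this is what Theorem~\ref{expandLh} and Theorem~\ref{multiexpandK} together guarantee, since for each fixed degree only finitely many $N$ and only finitely many $w$ contribute), so no convergence issue arises and $\omega$ may be applied componentwise. Everything else is bookkeeping: matching the indexing set $\tilde{\mathcal{J}}_N(P,\theta)$ on both sides and recalling that $\omega$ commutes with the composition-to-shape conventions via Proposition~\ref{Lhatisabasis}. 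I expect the write-up to be only a few lines, with the main (minor) obstacle being a clean justification of the termwise application of $\omega$.
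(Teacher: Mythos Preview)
Your proposal is correct and follows essentially the same route as the paper: start from Theorem~\ref{multiexpandK} applied to $\tilde K_\lambda$, apply $\omega$ termwise, and use Proposition~\ref{Lhatisabasis} to convert each $\omega(\tilde L_{\mathcal{C}(w)})$ into $\hat L_{\omega(\mathcal{C}(w))}$. The paper's proof is the same two-line computation, without the extra remarks on continuity you include.
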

\begin{proof}
We know from Theorem~\ref{multiexpandK} that $$\tilde K_{(P,\theta)}=\displaystyle\sum_{N\geq n}\sum_{w\in \tilde{\mathcal{J}}_N(P,\theta)}\tilde L_{\mathcal{C}(w)},$$ 
so $$J_\lambda =\omega(\tilde K_\lambda) = \displaystyle\sum_{N\geq n}\sum_{w\in \tilde{\mathcal{J}}_N(P,\theta)}\omega(\tilde L_{\mathcal{C}(w)}) 
= \displaystyle\sum_{N\geq n}\sum_{w\in \tilde{\mathcal{J}}_N(P,\theta)}\L_{\omega(\mathcal{C}(w))}.$$ 
\end{proof}

Recall that in $Sym$, $S(s_\lambda)=(-1)^{|\lambda|}\omega(s_\lambda)$, so one may expect similar behavior from $\tilde K_\lambda$ and $G_\lambda$. 
Indeed, we obtain the theorem below.

\begin{theorem}\label{thm:mSymAntipode}
In $\mSym$, the antipode map acts as follows.
\begin{itemize}
 \item[(a)] $S(\tilde K_\lambda)=J_\lambda(-x_1,-x_2,\ldots)=(-1)^{|\lambda|}\omega(G_\lambda)$, and
\item[(b)] $S(G_\lambda) = (-1)^{|\lambda|}J_\lambda=(-1)^{|\lambda|}\omega(\tilde K_\lambda)$.
\end{itemize}
\end{theorem}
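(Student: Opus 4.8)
The plan is to prove part (a) directly using the tools already assembled, and then derive part (b) from part (a) via the pairing and Lemma~\ref{lem:pairingantipode}. For part (a), the first step is to recall that $\mSym$ sits inside $\mQSym$, so the antipode on $\mSym$ is the restriction of the antipode on $\mQSym$; thus I may compute $S(\tilde K_\lambda)$ using results from Section~\ref{sec:Lhat}. The key identity is the expansion from the lemma just proved, $\tilde K_\lambda = \sum_{N\geq n}\sum_{w\in\tilde{\mathcal J}_N(P,\theta)}\tilde L_{\mathcal C(w)}$ (this is Theorem~\ref{multiexpandK}), combined with Theorem~\ref{LhatAntipode}, which gives $S(\tilde L_\alpha)=\L_{\omega(\alpha)}(-x_1,-x_2,\ldots)$. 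Applying $S$ termwise (it is continuous) yields
\begin{equation*}
S(\tilde K_\lambda)=\sum_{N\geq n}\sum_{w\in\tilde{\mathcal J}_N(P,\theta)}\L_{\omega(\mathcal C(w))}(-x_1,-x_2,\ldots)=J_\lambda(-x_1,-x_2,\ldots),
\end{equation*}
where the last equality is exactly the lemma preceding the theorem. Finally, $J_\lambda = \omega(\tilde K_\lambda)$ (Proposition 9.22 of~\cite{lam2007combinatorial}, quoted in Section~\ref{sec:weaksetvalued}) together with the relation $\tilde K_\lambda(x_1,x_2,\ldots)=(-1)^{|\lambda|}G_\lambda(-x_1,-x_2,\ldots)$ from Section~\ref{sec:Grothendieckfirst} lets me rewrite $J_\lambda(-x_1,-x_2,\ldots)=\omega(\tilde K_\lambda)(-x_1,-x_2,\ldots)=(-1)^{|\lambda|}\omega(G_\lambda)$, which completes (a).

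For part (b), the approach is to use duality rather than a fresh computation. The pairing $\langle g_\lambda, G_\mu\rangle=\delta_{\lambda,\mu}$ satisfies the hypotheses of Lemma~\ref{lem:pairingantipode} (this is noted at the start of the section, citing Theorem 9.15 of~\cite{lam2007combinatorial}), and the same holds for the dual pair $\langle \tilde j_\lambda, \tilde K_\mu\rangle=\delta_{\lambda,\mu}$. One route is: expand the known antipode $S(\tilde K_\lambda)$ from part (a) in the $\tilde K$-basis to read off the coefficients $e_{\lambda,\mu}$, then invoke Lemma~\ref{lem:pairingantipode} to conclude $S(G_\mu)=\sum_\lambda e_{\lambda,\mu}G_\lambda$ with the same coefficients, and finally recognize the resulting sum as $(-1)^{|\mu|}J_\mu$. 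A cleaner variant, which I would prefer to write, is simply to mimic the proof of (a) with the roles of $\tilde K$ and $G$ swapped: use the $G$-analogue of the structure results, namely that $G_\lambda$ expands in the $L_\alpha$ (fundamental quasisymmetric) basis of $\QSym$ via set-valued tableaux and that the $\QSym$ antipode sends $L_\alpha \mapsto (-1)^{|\alpha|}L_{\omega(\alpha)}$; the bookkeeping then produces $(-1)^{|\lambda|}J_\lambda$, and $J_\lambda=\omega(\tilde K_\lambda)$ finishes it. Alternatively one observes directly that applying $\omega$ and the sign substitution $x_i\mapsto -x_i$ to the identity in (a) and using involutivity of these operations yields (b) formally.

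The main obstacle I anticipate is purely bookkeeping: carefully justifying that the antipode $S$ on $\mSym$ is continuous and agrees with the restriction of $S$ on $\mQSym$, so that termwise application to the infinite expansion $\tilde K_\lambda=\sum\tilde L_{\mathcal C(w)}$ is legitimate. Since $\mSym\subset\mQSym$ is a sub-Hopf-algebra and arbitrary (continuous) linear combinations of $\tilde L_\alpha$ are admissible in $\mQSym$, this is routine but should be stated explicitly. A secondary point to be careful about is matching the sign exponents: one must track $|\lambda/\mu|$ versus $|T|$ in Buch's formula (Theorem~\ref{thm:GrothendieckDef}) and confirm the identity $\tilde K_\lambda(x)=(-1)^{|\lambda|}G_\lambda(-x)$ is used with the correct parity, but this is the same check already implicit in Section~\ref{sec:Grothendieckfirst}. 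No genuinely new combinatorial construction is needed; the theorem is a corollary of Theorem~\ref{LhatAntipode}, the preceding lemma, and the $\omega$-relations quoted from~\cite{lam2007combinatorial}.
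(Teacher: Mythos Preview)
Your proof of part (a) is exactly the paper's argument: expand $\tilde K_\lambda$ via Theorem~\ref{multiexpandK}, apply Theorem~\ref{LhatAntipode} termwise, and invoke the preceding lemma to recognize $J_\lambda(-x_1,-x_2,\ldots)$.

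For part (b) your proposal is correct but more elaborate than necessary. The paper does not use duality or Lemma~\ref{lem:pairingantipode} here; it simply substitutes into part (a). Since $G_\lambda(x)=(-1)^{|\lambda|}\tilde K_\lambda(-x)$ and the sign substitution $x_i\mapsto -x_i$ is a Hopf algebra map commuting with $S$, one gets
\[
S(G_\lambda)=(-1)^{|\lambda|}S(\tilde K_\lambda)(-x)=(-1)^{|\lambda|}J_\lambda(-(-x))=(-1)^{|\lambda|}J_\lambda.
\]
This is essentially your third alternative (without the $\omega$, which is not needed at this step). Your duality route would also work and is what the paper later uses to pass from $\mSym$ to $\MSym$ in Theorem~\ref{thm:MSymAntipode}, but for (b) the two-line substitution is cleaner.
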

\begin{proof}
For the first assertion, we have that 
 \begin{eqnarray}
  S(\tilde K_\lambda) &=& S(\displaystyle\sum_{N\geq n}\sum_{w\in \tilde{\mathcal{J}}_N(P,\theta)}\tilde L_{\mathcal{C}(w)}) \nonumber \\
&=& \displaystyle\sum_{N\geq n}\sum_{w\in \tilde{\mathcal{J}}_N(P,\theta)}S(\tilde L_{\mathcal{C}(w)}) \nonumber \\
&=& \displaystyle\sum_{N\geq n}\sum_{w\in \tilde{\mathcal{J}}_N(P,\theta)}\L_{\omega(\mathcal{C}(w))}(-x_1,-x_2,\ldots) \nonumber \\
&=& J_\lambda (-x_1, -x_2,\ldots ). \nonumber
 \end{eqnarray}

And for the second assertion,
\begin{eqnarray}
 S(G_\lambda)&=&S((-1)^{|\lambda|}\tilde K_\lambda (-x_1,-x_2, \ldots)) \nonumber \\
&=&(-1)^{|\lambda|}S(\tilde K_\lambda (-x_1,-x_2,\ldots) \nonumber \\
&=& (-1)^{|\lambda|}J_\lambda. \nonumber
\end{eqnarray}
\end{proof}

By Lemma~\ref{lem:pairingantipode}, we immediately have the following results in $\MSym$.

\begin{theorem}\label{thm:MSymAntipode} 
We have
\begin{itemize}
 \item[(a)] $S(\tilde j_\lambda)= (-1)^{|\lambda|} g_\lambda$, 
where $\tilde j_\lambda = (-1)^{|\lambda|}j_\lambda(-x_1,-x_2,\ldots)$, and 
\item[(b)] $S(j_\lambda)=g_\lambda(-x_1,-x_2,\ldots)$.
\end{itemize} 
\end{theorem}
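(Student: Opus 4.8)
The plan is to derive Theorem~\ref{thm:MSymAntipode} from Theorem~\ref{thm:mSymAntipode} purely by dualizing via Lemma~\ref{lem:pairingantipode}. The setup text just before the statement records exactly the hypotheses we need: the pairing $\langle g_\lambda, G_\mu\rangle = \delta_{\lambda,\mu}$ puts the product structure constants of one basis against the coproduct structure constants of the other (and vice versa), so $\{g_\lambda\}$ and $\{G_\lambda\}$ satisfy the criteria of Lemma~\ref{lem:pairingantipode}; likewise $\langle \tilde j_\lambda, \tilde K_\mu\rangle = \delta_{\lambda,\mu}$ and, applying $\omega$ to the first pairing, $\langle j_\lambda, \tilde J_\mu\rangle = \delta_{\lambda,\mu}$. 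So there are really two dual pairs to feed into the lemma, one for each part of the theorem.

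For part (b): by Theorem~\ref{thm:mSymAntipode}(b) we have $S(G_\lambda) = (-1)^{|\lambda|} J_\lambda$, i.e. $S(G_\lambda) = \sum_\mu e_{\lambda,\mu} G_\mu$ after expanding $(-1)^{|\lambda|}J_\lambda$ in the $G$-basis (the coefficients $e_{\lambda,\mu}$ are whatever they are; we do not need them explicitly). Since $\{g_\lambda\}$ is dual to $\{G_\mu\}$ with the product/coproduct constants swapped, Lemma~\ref{lem:pairingantipode} gives $S(j) = \sum_\lambda e_{\lambda,\mu} j_\lambda$ on the dual basis $\{j_\lambda\}$ dual to $\{\tilde J_\mu\}$... here I must be careful about which dual basis the lemma produces. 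The clean way is to note that the pairing $\langle j_\lambda, \tilde J_\mu\rangle=\delta_{\lambda,\mu}$ is the $\omega$-image of $\langle g_\lambda, G_\mu\rangle=\delta_{\lambda,\mu}$, and that $\omega$ is a Hopf automorphism of $\Sym$, hence commutes with the antipode; so $S(j_{\lambda/\mu}) = S(\omega(g_{\lambda/\mu})) = \omega(S(g_{\lambda/\mu})) = \omega((-1)^{|\lambda|}J_\lambda)$. Combined with the identity $\omega(J_\lambda) = \omega(\omega(\tilde K_\lambda)) = \tilde K_\lambda$ and the relation $\tilde K_\lambda = (-1)^{|\lambda|}G_\lambda(-x_1,-x_2,\ldots)$... no: I want this expressed in the $g$-world. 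Better: from $S(G_\lambda)=(-1)^{|\lambda|}J_\lambda$ one reads off the matrix $(e_{\lambda,\mu})$ of $S$ in the $G$-basis, and Lemma~\ref{lem:pairingantipode} says $S$ on the dual basis has the transpose-type expansion $S(j_\mu) = \sum_\lambda e_{\lambda,\mu} j_\lambda$. The content of part (b), $S(j_\lambda) = g_\lambda(-x_1,-x_2,\ldots)$, is then the assertion that this transposed matrix is exactly the matrix expressing $g_\lambda(-x_1,-x_2,\ldots)$ in the $j$-basis, which follows because $G$ and $g$ are dual and the operation ``negate the variables'' is self-adjoint with respect to the Hall pairing (it is the antipode of $\Sym$ up to the $\omega$ twist, hence its adjoint is itself). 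Part (a) is then immediate: $\tilde j_\lambda = (-1)^{|\lambda|}j_\lambda(-x_1,-x_2,\ldots)$ by definition, so $S(\tilde j_\lambda) = (-1)^{|\lambda|}S(j_\lambda(-x_1,-x_2,\ldots)) = (-1)^{|\lambda|}S(j_\lambda)(-x_1,-x_2,\ldots) = (-1)^{|\lambda|} g_\lambda(-(-x_1),\ldots) = (-1)^{|\lambda|} g_\lambda$, using that the variable substitution is an algebra map and that applying it twice is the identity.

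The step I expect to be the genuine obstacle is making the bookkeeping in Lemma~\ref{lem:pairingantipode} line up precisely --- i.e. verifying that the coefficient matrix that the lemma transports is the right one, and tracking the two different ``$\omega$ vs.\ negate-variables'' operations without sign or transpose errors. Everything else (the definitions of $\tilde j$ and $\tilde J$, the duality statements, the fact $\omega$ commutes with $S$) is already in place in the excerpt. So concretely I would: (i) state that $(g_\lambda,G_\mu)$ and $(\tilde j_\lambda,\tilde K_\mu)$ are dual pairs meeting the hypotheses of Lemma~\ref{lem:pairingantipode}; (ii) invoke Theorem~\ref{thm:mSymAntipode}(b) to get $S$ on $\{G_\lambda\}$; (iii) apply Lemma~\ref{lem:pairingantipode} to obtain $S$ on the dual basis $\{j_\lambda\}$, simplifying $(-1)^{|\lambda|}J_\lambda$-coefficients against the $G/g$ duality to land on $g_\lambda(-x_1,-x_2,\ldots)$, which is part (b); and (iv) substitute $x_i \mapsto -x_i$ and multiply by $(-1)^{|\lambda|}$ to pass from (b) to (a).
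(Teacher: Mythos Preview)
Your overall strategy is exactly the paper's: the entire proof there is the single sentence ``By Lemma~\ref{lem:pairingantipode}, we immediately have the following results in $\MSym$.'' So dualizing Theorem~\ref{thm:mSymAntipode} via the pairing is the intended route, and your step (iv) correctly passes between (a) and (b) using that $x_i\mapsto -x_i$ is a Hopf automorphism commuting with $S$.

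There is, however, a recurring bookkeeping slip in your step (iii). Lemma~\ref{lem:pairingantipode} applied to the dual pair $(\{G_\lambda\},\{g_\mu\})$ outputs a formula for $S(g_\mu)$, not for $S(j_\mu)$: the ``$B$'' in the lemma is the basis dual to ``$A$'', not some other basis. You make the same conflation in the $\omega$-argument when you write $S(j_\lambda)=\omega(S(g_\lambda))=\omega((-1)^{|\lambda|}J_\lambda)$; the last equality would require $S(g_\lambda)=(-1)^{|\lambda|}J_\lambda$, but that is the formula for $S(G_\lambda)$ in $\mSym$, not for $S(g_\lambda)$ in $\MSym$. Two clean ways to repair this. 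First, apply the lemma instead to the pair $(\{\tilde K_\lambda\},\{\tilde j_\mu\})$ for part (a), or to $(\{\tilde J_\lambda\},\{j_\mu\})$ for part (b); in either case you need the self-adjointness of $\omega$ and of $x\mapsto -x$ under the Hall pairing to identify the transposed coefficient matrix with the desired answer. Second (and simpler), since $\MSym$ is literally $\Sym$ with a different distinguished basis, its antipode is the standard one $f\mapsto\omega(f)(-x)$ on homogeneous pieces, whence $S(j_\lambda)=\omega(j_\lambda)(-x)=g_\lambda(-x)$ immediately; then (a) follows from your step (iv). This second route bypasses Lemma~\ref{lem:pairingantipode} altogether, though it is not what the paper signals.
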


Next, we work toward expanding $S(G_\lambda)$ and $S(\tilde j_\lambda)$ in terms of $\{G_\mu\}$ and $\{\tilde j_\mu\}$, respectively. We introduce two theorems of Lenart as well as
the notion of a restricted plane partition.

Given partitions $\lambda$ and $\mu$ 
with $\mu\subseteq\lambda$, define an
\textit{elegant filling} of the skew shape $\lambda/\mu$ to be a semistandard filling such that the numbers in row $i$ lie in $[1,i-1]$. In particular, there can be no elegant filling of a shape that has a box in the first row. Now let $f_\lambda^\mu$ denote
the number of elegant fillings of $\lambda/\mu$ for $\mu\subseteq\lambda$ and set $f_\lambda^\mu=0$ otherwise.

\begin{theorem}\cite[Theorem 2.7]{lenart2000combinatorial}\label{sintoG}
For a partition $\lambda$, we have $$s_\lambda = \sum_{\mu \supseteq \lambda} f^{\lambda}_{\mu} G_{\mu},$$ where $f^\lambda_\mu$ is the number of elegant fillings of $\mu/\lambda$.
\end{theorem}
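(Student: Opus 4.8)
The statement to prove is Lenart's formula $s_\lambda = \sum_{\mu \supseteq \lambda} f^\lambda_\mu G_\mu$, expressing a Schur function in terms of stable Grothendieck polynomials via elegant fillings. Since this is quoted as a known theorem of Lenart, the plan is to give the argument that appears in \cite{lenart2000combinatorial}, or a streamlined combinatorial version of it. The natural approach is to invert the known expansion of $G_\mu$ into Schur functions. Recall that $G_\mu = \sum_{\lambda \subseteq \mu} (-1)^{|\mu/\lambda|} d_{\mu\lambda}\, s_\lambda$ for suitable nonnegative integers $d_{\mu\lambda}$ (with $d_{\mu\mu}=1$), where $d_{\mu\lambda}$ counts certain tableaux; this holds because $G_\mu$ is $\mathbb{Z}[[x]]$-symmetric of top degree $|\mu|$ with leading term $s_\mu$ and lower-degree homogeneous pieces that are themselves symmetric, so each expands in the Schur basis. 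The claim then reduces to showing that the coefficients $f^\lambda_\mu$ (counting elegant fillings of $\mu/\lambda$) are precisely the entries of the inverse of the unitriangular matrix $\bigl((-1)^{|\mu/\lambda|}d_{\mu\lambda}\bigr)$.

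\textbf{Key steps, in order.} First I would fix the combinatorial model on the $G$-side: describe set-valued tableaux and the sign $(-1)^{|T|-|\lambda/\mu|}$ from Theorem~\ref{thm:GrothendieckDef}, and record the resulting finite expansion $G_\mu = s_\mu + (\text{lower degree symmetric terms})$. Second, I would establish the triangularity and the precise identification of $d_{\mu\lambda}$ — this is where one extracts, from a set-valued tableau of shape $\mu$ and content summing to some smaller partition $\lambda$, a semistandard tableau together with the "extra" entries, and organizes the count. Third, and this is the heart of the matter, I would prove the matrix-inversion identity: for all $\lambda \subseteq \nu$,
\begin{equation}
\sum_{\lambda \subseteq \mu \subseteq \nu} f^\lambda_\mu \,(-1)^{|\nu/\mu|} d_{\nu\mu} = \delta_{\lambda\nu}. \nonumber
\end{equation}
This is a sign-reversing-involution argument: one exhibits an involution on the set of pairs (elegant filling of $\mu/\lambda$, set-valued-tableau datum witnessing $d_{\nu\mu}$) that changes $|\nu/\mu|$ by one and has no fixed points unless $\mu = \nu = \lambda$. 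Fourth, I would conclude: multiplying the (known, invertible) relation $G_\mu = \sum (-1)^{|\mu/\lambda|} d_{\mu\lambda} s_\lambda$ by $f^\cdot_\cdot$ and using step three gives $\sum_\mu f^\lambda_\mu G_\mu = s_\lambda$, and restricting the sum to $\mu \supseteq \lambda$ is automatic since $f^\lambda_\mu = 0$ otherwise.

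\textbf{Main obstacle.} The crux is step three — constructing the sign-reversing involution that cancels all the cross terms. The delicate point is that elegant fillings are constrained (row $i$ uses entries in $[1,i-1]$), so the involution must interact correctly with both the elegant-filling structure of $\mu/\lambda$ and the set-valued structure encoding $d_{\nu\mu}$; one typically reads the outermost disagreement between the two shapes along a border strip and either absorbs a box into the elegant filling or releases it, toggling the sign. Verifying that this map is well-defined (the result is still a valid elegant filling, still a valid set-valued datum), that it is an involution, and that its only fixed point is the diagonal term requires careful bookkeeping along rows. An alternative that sidesteps the involution is to cite Lenart's duality between $G_\mu$ and the dual basis $g_\lambda$ together with the known expansion of $g_\lambda$ into Schur functions (via $f$-numbers on the dual side) and transpose; but since the paper treats $g_\lambda$ and $G_\lambda$ in detail, I would present the direct combinatorial inversion as the cleaner self-contained route, falling back to the citation for the routine verifications.
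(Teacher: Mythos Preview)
The paper does not prove this statement at all: it is quoted verbatim as Lenart's Theorem~2.7 and then used as a black box inside the proof of Theorem~\ref{thm:S(G)}. So there is no ``paper's own proof'' to compare your proposal against; anything you supply here is extra.

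That said, your outline contains a genuine error that you should fix before the rest of the plan makes sense. You write
\[
G_\mu \;=\; \sum_{\lambda \subseteq \mu} (-1)^{|\mu/\lambda|}\, d_{\mu\lambda}\, s_\lambda
\]
and justify this by saying ``$G_\mu$ is symmetric of \emph{top} degree $|\mu|$ with \emph{lower}-degree homogeneous pieces''. In the conventions of this paper (Theorem~\ref{thm:GrothendieckDef}), the opposite is true: the lowest-degree part of $G_\mu$ is $s_\mu$, and the higher-degree pieces come from set-valued tableaux with extra entries. The correct Schur expansion is exactly Theorem~\ref{Gintos}:
\[
G_\mu \;=\; \sum_{\nu \supseteq \mu} (-1)^{|\nu/\mu|}\, r_{\nu\mu}\, s_\nu,
\]
with $r_{\nu\mu}$ counting strictly elegant fillings of $\nu/\mu$. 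With your stated (reversed) expansion, the substitution you propose would produce, for each $s_{\lambda'}$, a sum over \emph{all} $\mu$ containing both $\lambda$ and $\lambda'$, an infinite set with no reason to collapse; the triangularity runs the wrong way and the argument does not close.

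Once you replace your $d_{\mu\lambda}$ expansion by Theorem~\ref{Gintos}, the identity you wrote down,
\[
\sum_{\lambda \subseteq \mu \subseteq \nu} f^\lambda_\mu\, (-1)^{|\nu/\mu|}\, r_{\nu\mu} \;=\; \delta_{\lambda\nu},
\]
is precisely the correct unitriangular inversion (now a \emph{finite} sum for each fixed $\lambda,\nu$), and the sign-reversing involution between pairs (elegant filling of $\mu/\lambda$, strictly elegant filling of $\nu/\mu$) is the right mechanism. So the skeleton of your plan is salvageable, but step one and the heuristic justification you gave for it need to be rewritten with the containments reversed.
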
 

For the second theorem, let $r_{\lambda \mu}$ be the number of elegant fillings of $\lambda/\mu$ such that both rows and columns are strictly increasing. We will refer to such fillings as \textit{strictly elegant}.

\begin{theorem}\cite[Theorem 2.2]{lenart2000combinatorial}\label{Gintos}
We can expand the stable Grothendieck polynomial $G_\lambda$ in terms of Schur functions as follows
$$G_\lambda=\displaystyle\sum_{\mu \supseteq \lambda}(-1)^{|\mu/\lambda|}r_{\mu\lambda}s_\mu.$$
\end{theorem}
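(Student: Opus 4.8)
The plan is to deduce Theorem~\ref{Gintos} from Theorem~\ref{sintoG} by recognizing the two expansions as mutually inverse. Pass to the degree completion $\widehat{\Sym}=\prod_d\Sym_d$, in which all the sums involved converge and $\{G_\lambda\}$ is a continuous basis (its lowest-degree part is $s_\lambda$). Both transition matrices are interval-finite and unitriangular for the containment order $\lambda\le\mu\iff\lambda\subseteq\mu$: in Theorem~\ref{sintoG} one has $f^\lambda_\nu=0$ unless $\lambda\subseteq\nu$ and $f^\lambda_\lambda=1$ (the empty filling), and the array $\bigl((-1)^{|\mu/\nu|}r_{\mu\nu}\bigr)$ has the same support with $1$'s on the diagonal. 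Such a matrix $F=(f^\lambda_\nu)$ has a unique two-sided inverse, so, granting Theorem~\ref{sintoG}, Theorem~\ref{Gintos} is equivalent to the numerical identity
\[
\sum_{\lambda\subseteq\nu\subseteq\mu}(-1)^{|\mu/\nu|}\,f^{\lambda}_{\nu}\,r_{\mu\nu}=\delta_{\lambda\mu}
\qquad\text{for all partitions }\lambda\subseteq\mu,
\]
where $f^\lambda_\nu$ counts elegant fillings of $\nu/\lambda$ and $r_{\mu\nu}$ counts strictly elegant fillings of $\mu/\nu$. When $\lambda=\mu$ the only surviving term is $\nu=\lambda$, contributing $1$; the substance is that the signed sum vanishes whenever $\lambda\subsetneq\mu$.

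To establish that vanishing I would build a sign-reversing involution on the set of triples $(\nu,E,F)$ with $\lambda\subseteq\nu\subseteq\mu$, $E$ an elegant filling of $\nu/\lambda$, and $F$ a strictly elegant filling of $\mu/\nu$, where a triple carries sign $(-1)^{|F|}=(-1)^{|\mu/\nu|}$. The involution should transfer a single cell across the divide between $E$ and $F$: after fixing, say, the column-by-column reading order on the cells of $\mu/\lambda$, pick the distinguished cell $c$ to be the last cell of $F$ that is removable from $\mu$ (or, when $F$ is empty, an appropriately chosen cell of $E$), then toggle whether $c$ is recorded in the elegant part or in the strictly elegant part, deleting or creating its forced entry. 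One checks that the move always yields another valid triple---the bound ``entries of row $i$ lie in $[1,i-1]$'' is preserved, and the extra row-strictness demanded inside $F$ can be arranged for the single moved cell---that it is an involution, and that it is fixed-point-free once $\lambda\subsetneq\mu$, which produces the cancellation.

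The main obstacle is exactly this involution: making the choice of distinguished cell canonical and verifying, in the handful of boundary configurations that occur, that toggling it keeps the column-strictness across the $E$--$F$ seam intact while respecting the two different row conditions on the two sides. If that bookkeeping proves unwieldy, an alternative is to bypass Theorem~\ref{sintoG} entirely and expand $G_\lambda=\sum_T(-1)^{|T|-|\lambda|}x^T$ directly over set-valued tableaux, using an uncrowding (Hecke-insertion) bijection to partition the set-valued tableaux of shape $\lambda$ according to the semistandard tableau they produce, so that each fiber is parametrized by exactly a strictly elegant filling; this is close to Lenart's own argument and replaces the inversion identity by the insertion algorithm.
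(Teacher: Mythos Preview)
The paper does not prove this statement: it is quoted from Lenart \cite{lenart2000combinatorial} and used as a black box in the proof of Theorem~\ref{thm:S(G)}. There is therefore no argument in the paper to compare your proposal against.

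Evaluating the proposal on its own merits: the reduction is sound. Granting Theorem~\ref{sintoG}, the transition matrix $(f^\lambda_\nu)$ is unitriangular for containment and hence invertible in the completion, so Theorem~\ref{Gintos} is indeed equivalent to the identity $\sum_{\lambda\subseteq\nu\subseteq\mu}(-1)^{|\mu/\nu|}f^\lambda_\nu\, r_{\mu\nu}=\delta_{\lambda\mu}$. But you have not proved that identity. Your description of the involution is a hope, not a construction: ``pick the last cell of $F$ removable from $\mu$, or when $F$ is empty an appropriately chosen cell of $E$'' does not specify a rule, and ``deleting or creating its forced entry'' leaves open what value the transferred cell carries so that the semistandard condition on the $E$-side and the additional row-strictness on the $F$-side are simultaneously respected across the seam. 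You identify this yourself as the main obstacle; as written, it \emph{is} the proof. The alternative you mention at the end---uncrowding set-valued tableaux into a semistandard tableau together with a strictly elegant recording filling---is essentially Lenart's own argument and is the route that actually works with minimal pain; if you insist on the matrix-inversion approach, you must exhibit the involution explicitly and check the boundary cases, not merely assert that one should exist.
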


We next define the combinatorial object that we need to expand $S(G_\lambda)$ in terms of $\{G_\mu\}$.

\begin{definition} Let $\lambda\supseteq\mu$ be nonempty partitions. A restricted plane partition is a filling
of the boxes of $\lambda/\mu$ with positive integers such that the entries are weakly decreasing along rows and columns with the following restriction.
If box $b\in\lambda/\mu$ is an outer corner of $\lambda$ (i.e. $\lambda\cup b$ is a partition), define $h(b)$ to be the number of boxes in $\mu$ lying above $b$ in the same column as $b$ or to the left of $b$ in the same row as $b$. The label of such a box $b$ must lie in the interval $[1,h(b)]$.
\end{definition}

We now define the number $P^\mu_\lambda$.
First, $P^\mu_\lambda=0$ if $\mu \nsubseteq \lambda$, and $P^\mu_\lambda=1$ if $\lambda=\mu$. 
If $\mu \subset \lambda$, then $P^\mu_\lambda$ is equal to the number of restricted plane
partitions of the skew shape $\lambda/\mu$.

\begin{example}
The diagram on the left shows $h(b)$ for each box $b$ in the shape $(5,5,5)/(4,2)$ that is an outer corner of $(4,2)$. The diagram on the right shows a restricted plane partition on $(5,5,5)/(4,2)$.

\begin{center}
\begin{ytableau}
 $ $ & $ $ & $ $ & $ $ & 4 \\
$ $ & $ $ & 3 &  &  \\
2 &  &  &  &  \\
\end{ytableau}\hspace{1in}
\begin{ytableau}
 $ $ & $ $ & $ $ & $ $ & 3 \\
$ $ & $ $ & 3 & 3 & 3 \\
2 & 2 & 2 & 1 & 1 \\
\end{ytableau}
\end{center}
\end{example}

\begin{theorem}\label{thm:S(G)} Let $\lambda$ and $\mu$ be partitions. Then
\begin{itemize}
\item[(a)]$S(G_\mu)=(-1)^{|\mu|}\displaystyle\sum_{\lambda}P^{\mu^t}_\lambda G_\lambda$, and 
\item[(b)]$S(\tilde j_\lambda)=(-1)^{|\lambda|} \displaystyle\sum_{\mu}P^{\mu}_{\lambda^t} \tilde j_\mu$.
 \end{itemize}
\end{theorem}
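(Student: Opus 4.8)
The plan is to prove part (a) first and then deduce part (b) from Lemma~\ref{lem:pairingantipode} using the duality $\langle \tilde j_\lambda, \tilde K_\mu\rangle = \delta_{\lambda,\mu}$ together with the fact that $\{\tilde j_\lambda\}$ and $\{G_\mu\}$ (equivalently $\{\tilde j_\lambda\}$ and $\{\tilde K_\mu\}$) have the dual structure-constant property established in Section~\ref{MSym}. So the whole weight of the argument lies in (a): expanding $S(G_\mu) = (-1)^{|\mu|}\omega(\tilde K_\mu)$ back into the basis $\{G_\lambda\}$, and showing the coefficient of $G_\lambda$ is $(-1)^{|\mu|}P^{\mu^t}_\lambda$.

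First I would set up the ring of genuine symmetric functions and move everything through $\omega$. By Theorem~\ref{thm:mSymAntipode}(b), $S(G_\mu) = (-1)^{|\mu|}\omega(\tilde K_\mu) = (-1)^{|\mu|} J_\mu$. Now I want to expand $J_\mu$ in the $G$-basis. The natural route is via Schur functions: use Lenart's Theorem~\ref{Gintos} to write $G_\nu$ in the Schur basis (coefficients $(-1)^{|\rho/\nu|}r_{\rho\nu}$), and Lenart's Theorem~\ref{sintoG} to write $s_\rho$ back in the $G$-basis (coefficients $f^\rho_\lambda$, counting elegant fillings of $\lambda/\rho$). Compose these two inversions: after applying $\omega$, one needs the $\omega$-images of $G_\mu$ — but $\omega(G_\mu)$ is a known object, namely $J_\mu$ up to sign; alternatively work directly with $J_\mu$, whose Schur expansion is governed by weak set-valued tableaux, which by the column-strict structure of weak set-valued tableaux (rows strict, columns weak) are in bijection with the transpose data for ordinary set-valued tableaux of shape $\mu$. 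Either way, the coefficient of $G_\lambda$ in $J_\mu$ is an alternating double sum $\sum_{\rho} (\pm) (\text{strictly elegant count})(\text{elegant count})$, and the claim is that this telescopes to the single positive integer $P^{\mu^t}_\lambda$ counting restricted plane partitions of $\lambda/\mu^t$.

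The main obstacle — and the combinatorial heart of the proof — is establishing that sign-reversing/telescoping identity: that $\sum_{\rho \supseteq \mu^t} (-1)^{|\rho/\mu^t|}\, r_{\rho,\mu^t}\, f^{\rho}_{\lambda} = P^{\mu^t}_{\lambda}$, i.e. the composite of Lenart's two elegant-filling matrices equals the restricted-plane-partition matrix. I would attack this with a sign-reversing involution on pairs (strictly elegant filling of $\rho/\mu^t$, elegant filling of $\lambda/\rho$), whose fixed points are exactly the restricted plane partitions of $\lambda/\mu^t$ — the transpose operation should convert "elegant filling" data (entries in row $i$ bounded by $i-1$) into the "entries of an outer-corner box $b$ bounded by $h(b)$" condition, and the decreasing-along-rows-and-columns requirement of a restricted plane partition emerges from gluing a column-strict piece to a row/column-weak piece. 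Constructing this involution carefully — pinning down on which box the two fillings first "disagree" and showing the swap is a well-defined bounded filling — is where the real work is; I would verify it first on small skew shapes (e.g. using the running example $(5,5,5)/(4,2)$) before writing the general rule.

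Once part (a) is in hand, part (b) is immediate: by Theorem 9.15 of \cite{lam2007combinatorial} the bases $\{g_\lambda\}, \{G_\mu\}$ and hence $\{\tilde j_\lambda\}, \{\tilde K_\mu\}$ satisfy the hypotheses of Lemma~\ref{lem:pairingantipode} under the Hall pairing, with $\langle \tilde j_\lambda, \tilde K_\mu\rangle = \delta_{\lambda,\mu}$. Writing $S(\tilde K_\mu) = (-1)^{|\mu|}\sum_\lambda P^{\mu}_{\lambda^t}\,\tilde K_\lambda$ — which is the content of (a) read through the isomorphism $\tilde K \leftrightarrow G$ and accounting for the transpose coming from $\omega$ — Lemma~\ref{lem:pairingantipode} transposes the coefficient matrix, yielding $S(\tilde j_\lambda) = (-1)^{|\lambda|}\sum_\mu P^{\mu}_{\lambda^t}\,\tilde j_\mu$, exactly as claimed. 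I would double-check the transpose bookkeeping (the $\mu^t$ in (a) versus the $\lambda^t$ in (b)) by testing both formulas against a single small example, say $\mu = (2,1)$, computing $S(G_{(2,1)})$ directly from $S(G_\mu) = (-1)^{|\mu|}J_\mu$ and from the claimed restricted-plane-partition sum.
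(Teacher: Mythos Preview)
Your overall architecture matches the paper's: reduce (b) to (a) via Lemma~\ref{lem:pairingantipode}, use Theorem~\ref{thm:mSymAntipode}(b) to write $S(G_\mu)=(-1)^{|\mu|}J_\mu$, and then expand $J_\mu$ in the $G$-basis by composing Lenart's two theorems. The paper does exactly this.

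There is, however, a genuine misstep at what you call ``the combinatorial heart.'' The coefficient of $G_\nu$ in $J_\mu$ is \emph{not} an alternating sum. If you pass through $\tilde K_\mu$ (rather than $G_\mu$) before applying $\omega$, the sign $(-1)^{|\rho/\mu|}$ in Theorem~\ref{Gintos} is absorbed: one gets the sign-free expansion $\tilde K_\mu=\sum_{\rho\supseteq\mu} r_{\rho\mu}\,s_\rho$, hence $J_\mu=\omega(\tilde K_\mu)=\sum_{\rho\supseteq\mu} r_{\rho\mu}\,s_{\rho^t}$, and then Theorem~\ref{sintoG} gives
\[
J_\mu \;=\; \sum_{\substack{\rho\supseteq\mu\\ \nu\supseteq\rho^t}} r_{\rho\mu}\,f^{\rho^t}_{\nu}\,G_\nu,
\]
so the coefficient of $G_\nu$ is the manifestly nonnegative sum $\sum_{\rho} r_{\rho\mu}\,f^{\rho^t}_{\nu}$. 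Consequently there is no sign-reversing involution to build; what you actually need is a \emph{direct} bijection between pairs (strictly elegant filling of $\rho/\mu$, elegant filling of $\nu/\rho^t$) and restricted plane partitions of shape $\nu/\mu^t$ (equivalently $\nu^t/\mu$, by transposing). The paper supplies this bijection explicitly: for a box $b$ in the strictly-elegant region set $\phi(b)=d(b)-e_b$ (diagonal index minus entry), and for a box in the transposed-elegant region set $\phi(b)=c(b)-e_b$ (column index minus entry); one checks these glue to a weakly-decreasing filling satisfying the $h(b)$ bound on outer corners, and the inverse recovers $\rho$ by the threshold $e_b\geq c(b)$.

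So your plan is right in outline but needs this one correction: replace ``sign-reversing involution with fixed points the restricted plane partitions'' by ``explicit bijection onto the restricted plane partitions,'' and route the Schur expansion through $\tilde K_\mu$ so that no signs appear. Your derivation of (b) from (a) via the pairing is fine.
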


\begin{proof}
 We will focus on part $(a)$, and part $(b)$ will follow from Lemma~\ref{lem:pairingantipode}.

From Theorem \ref{thm:mSymAntipode}, we know that  $$S(G_\lambda) = (-1)^{|\lambda|}J_\lambda,$$ so it remains to expand $J_\lambda$ in terms of stable Grothendieck polynomials. 

From Theorem \ref{Gintos}, it easily follows that we can write $$\tilde K_\lambda=\displaystyle\sum_{\mu \supseteq \lambda}r_{\mu \lambda}s_\mu.$$ Applying $\omega$ to both sides, we have 
 $$\tilde J_\lambda=\displaystyle\sum_{\mu \supseteq \lambda}r_{\mu \lambda}s_{\mu^t}.$$ Now we can use Theorem~\ref{sintoG} to write
$$\tilde J_\lambda =\displaystyle\sum_{\substack{\mu \supseteq \lambda \\ \nu \supseteq \mu^t}}r_{\mu \lambda}f^{\mu^t}_{\nu}G_\nu.$$ Thus the coefficient of $G_\nu$ in $\tilde J_\lambda$ is 
$\displaystyle\sum_{\substack{\mu \text{ such that } \\ \mu\supseteq\lambda \text{ and} \\ \mu^t\subseteq \nu}}r_{\mu\lambda}f^{\mu^t}_{\nu}$.

We describe a bijection between 
\begin{enumerate}
\item partitions of shape $\nu^t/\lambda$ that contain some $\mu\supseteq\lambda$ such that the filling of $\mu/\lambda$ is strictly elegant
and boxes in $\nu^t/\mu$ are filled such that the transpose is an elegant filling of $\nu/\mu^t$ and
\item restricted plane partitions of $\nu^t/\lambda$.
\end{enumerate}
Note that the transpose of restricted plane partition of shape $\nu^t/\lambda$ is a restricted plane partition of shape $\nu/\lambda^t$.

We first define a map $\phi$ that takes objects in group $(1)$ to objects in group $(2)$. Suppose we have such a filling of shape $\nu^t/\lambda$ with some $\mu$ with $\lambda\subset\mu\subset\nu^t$. For any box $b$ in $\nu^t/\lambda$, let $c(b)$ denote the column containing $b$, $r(b)$ denote the row containing $b$, $d(b)=r(b)+c(b)-1$ denote the southwest to northeast diagonal containing $b$, and $e_b$ denote the integer in box $b$.

To obtain a restricted plane partition follow these steps.
\begin{itemize}
 \item[(1)] if box $b$ is in $\mu/\lambda$, fill the corresponding box in the restricted plane partition with $\phi(b)=d(b)-e_b$, and 
\item[(2)] if box $b$ is in $\nu^t/\mu$, fill the corresponding box in the restricted plane partition with $\phi(b)=c(b)-e_b$.
\end{itemize}

\begin{figure}[h]
\begin{center}
\begin{ytableau}
 \cdot & \cdot & \cdot & \cdot \\
\cdot & \cdot & $ $ & b  \\
\cdot & \cdot & $ $ & $ $ \\
$ $ & $ $ \\
\end{ytableau}
\end{center}
\caption{In this figure, boxes in $\lambda$ are marked with a dot. For box $b\in \nu^t/\mu$, we have $c(b)=4$, $r(b)=2$, and $d(b)=5$.}
\end{figure}

It is easy to see that the parts of the restricted plane partition corresponding to shape $\mu/\lambda$ and to $\nu^t/\mu$ are weakly decreasing in rows and columns. 
We now check that entries are weakly
decreasing along the seams and are positive integers. If box $b$ is in $\mu/\lambda$, then $e_b\leq r(b)-1$ because the filling is strictly elegant. Therefore

\[\phi(b)=d(b)-e_b\geq r(b)+c(b)-1 - (r(b)-1)=c(b).\] 
If box $a$ is in $\nu^t/\mu$, then $1\leq e_a \leq c(a)-1$ because the transpose of the filling is elegant, so 

\[1\leq \phi(a)=c(a)-e_a\leq c(a)-1.\]
If $b$ and $a$ are adjacent, then $c(b) \leq c(a)$, so $\phi(b)\geq\phi(a)$.

Next, we check that $\phi(b)\in[1,h(b)]$ for all $b\in \nu^t/\lambda$ that are outer corners of $\lambda$ (i.e. $\lambda\cup b$ is a partition shape). This guarantees that the resulting filling is a restricted plane partition because we have already shown the resulting filling is weakly decreasing. 

Suppose such a box $b$ is in $\mu/\lambda$. Since $b$ is an outer corner of $\lambda$, $d(b)=h(b)+1$. It follows that 

\[\phi(b)=d(b)-e_b = h(b)+1-e_b\leq h(b),\] as desired.

Next suppose box $b$ described above is in $\nu^t/\mu$. Then \[\phi(b)=c(b)-e_b\leq c(b)-1 \leq h(b). \]

Beginning with a restricted plane partition of $\nu^t/\lambda$, we define a map, $\psi$, to recover $\mu$ and the fillings of $\mu/\lambda$ and $\nu^t/\mu$ as follows. Let $b$ be a box in the restricted plane partition. If $e_b\geq c(b)$, then $b$ is in $\mu$ and $\psi(b)=d(b)-e_b$. Note that $e_b\geq c(b)$ guarantees $\psi(b)\leq r(b)-1$, as is required to be strictly elegant.

If $e_b\leq c(b)$, then $b$ is in $\nu^t/\mu$, and $\psi(b)=c(b)-e_b$. Here, $e_b\leq c(b)$ implies that $\psi(b)\leq j-1$, which is necessary to have a transposed elegant filling.

It is easy to see that resulting rows and columns of $\mu$ will be strictly increasing, the resulting rows of $\nu^t/\mu$ will be strictly increasing, and the resulting columns of $\nu^t/\mu$ will be weakly increasing. Thus the image of $\psi$ is a strictly elegant filling of $\mu/\lambda$
and a transposed elegant filling of $\nu^t/\mu$. Clearly the composition of $\phi$ and $\psi$ is the identity, so they are indeed inverses.

\end{proof}

Note that the antipode applied to $G_\lambda$ gives an infinite sum of stable Grothendieck polynomials (see Remark~\ref{rem:Gamma}) while 
applying $S$ to $\tilde j_\lambda$ can be written as a finite sum of $\tilde j$'s. This implies that while the space spanned by stable 
Grothendieck polynomials, $\Gamma$, is not a Hopf algebra, the space spanned by $\tilde j$'s is a Hopf algebra. 

\begin{example}
To illustrate the bijection described above, consider $\lambda=(3,2,1)$, $\mu=(3,3,2,2)$, and $\nu^t=(5,4,4,3)$. The figure on the left is a filling such that $\mu/\lambda$ is strictly elegant and the transpose of $\nu^t/\mu$
is elegant. The entries in $\mu/\lambda$ are in bold. The figure on the right is the corresponding restricted plane partition of $\nu^t/\lambda$.

\begin{center}
\begin{ytableau}
 $ $ & $ $ & $ $ & 2 & 4 \\
$ $ & $ $ & \textbf{1} & 3  \\
$ $ & \textbf{1} & 1 & 3 \\
\textbf{2} & \textbf{3} & 2 \\
\end{ytableau}\hspace{1in}
\begin{ytableau}
 $ $ & $ $ & $ $ & 2 & 1 \\
$ $ & $ $ & 3 & 1  \\
$ $ & 3 & 2 & 1 \\
2 & 2 & 1 \\
\end{ytableau}
\end{center}

 If $b$ is the box in the bottom left corner of the partition on the left, we see that $\phi(b)=d(b)-e_b=4-2=2$. If $a$ is the box in the upper right corner
of the partition on the left, we have $\phi(a)=c(a)-e_a=5-4=1$. In the restricted plane partition on the right, we can see that the boxes in positions $(4,1)$, $(3,2)$, $(4,2)$, and $(2,3)$ are in $\mu/\lambda$ in the image of 
$\psi$ since in these boxes $e_b \geq c(b)$.
\end{example}

\section*{Acknowledgements}
The author thanks Pasha Pylyavskyy and Vic Reiner for many helpful conversations as well as the referee who gave important suggestions to improve clarity.

\bibliographystyle{alpha}
\bibliography{thesis.bib}

\end{document}